\documentclass[12pt]{amsart}
\usepackage{amsmath,amsthm, amssymb,mathrsfs}
\usepackage{graphics}
\usepackage{amsfonts} 
\usepackage[usenames,dvipsnames]{color}
\usepackage{tikz}
\usepackage{tkz-euclide}
\usetikzlibrary{matrix,arrows,decorations.pathmorphing}
\usepackage{enumerate}
\usepackage{mathrsfs}
\usepackage{bbm}
\usepackage{color}
\newcommand{\N}{\mathbb{N}}
\newcommand{\R}{\mathbb{R}}
\newcommand{\Q}{\mathbb{Q}} 
\newcommand{\Z}{\mathbb{Z}}

\newcommand{\cob}{\textrm{cob}}
\newcommand{\diam}{\textrm{diam}}

\newcommand{\Union}{\bigcup}

\newcommand{\disUnion}{\bigsqcup}
\newcommand{\intersect}{\cap}
\newcommand{\Intersect}{\bigcap}
\newcommand{\Top}{{\rm top}}
\newcommand{\Base}{{\rm base}}

\newcommand{\Inf}{\textrm{Inf }}
\usepackage{draftwatermark}
\SetWatermarkScale{4}
\SetWatermarkLightness{.75}
\SetWatermarkText{}

\newcommand{\mchi}{\mathbbm{1}}

\theoremstyle{plain}
\newtheorem{main}{Main Theorem}
\newtheorem{theorem}{Theorem}[section]
\newtheorem{prop}[theorem]{Proposition} 
\newtheorem{lemma}[theorem]{Lemma}

\newtheorem{coro}[theorem]{Corollary}

\theoremstyle{definition}
\newtheorem{defn}[theorem]{Definition}
\newtheorem{remark}[theorem]{Remark}

\newtheorem{example}[theorem]{Example}
\newtheorem{note}[theorem]{Note}

\title[Speedups]{Topological Speedups for Minimal Cantor Systems}
\author[D.\ Ash]{Drew D. Ash}
\address{Department of Mathematics and Computer Science, Albion College, 611 E. Porter Street, Albion, MI 49224}
\email{dash@albion.edu}

\author[N.\ Ormes] {Nicholas S. Ormes}
\address{Department of Mathematics, University of Denver, 2280 S. Vine Street, Denver, CO 80208}
\email{normes@du.edu}

\keywords{Topological speedups, orbit equivalence, strong orbit equivalence, Kakutani-Rokhlin towers, dimension groups}
\subjclass[2010]{Primary 37B05; Secondary 37A20, 37A25, 37B10, 37B40}

\begin{document}
\maketitle
\begin{abstract}
	  In this paper we study speedups of dynamical systems in the topological category. Specifically, we characterize when one minimal homeomorphism on a Cantor space is the speedup of another. We go on to provide a characterization for strong speedups, i.e., when the jump function has at most one point of discontinuity. These results provide topological versions of the measure-theoretic results of Arnoux, Ornstein and Weiss, and are closely related to Giordano, Putnam and Skau's characterization of orbit equivalence for minimal Cantor systems. 
\end{abstract}

\section{Introduction}
Given a dynamical system $T:X \to X$ we generally define a \emph{speedup} of $(X,T)$ to be another dynamical system $S:Y \to Y$ conjugate to $T':X\rightarrow X$ where $T'(x)=T^{p(x)}(x)$ for some function $p:X\rightarrow\mathbb{Z}^{+}$. We will refer to $p$ in the preceding definition as the ``jump function.'' Speedups can be viewed in both the topological and measure-theoretic category. While this paper considers topological speedups, the study of speedups has a longer history in the measure-theoretic category. Of note are two papers of Neveu \cite{Neveu1, Neveu2}. The first paper proves a structure theorem relating jump functions for speedups to ``stopping times.'' His second paper extends Abramov's formula on the entropy of induced transformation to calculate the entropy of speedups with a measurable jump function. The most general theorem in this category came in a $1985$ paper of Arnoux, Ornstein, and Weiss, where they showed: for any ergodic measure preserving transformation $(X,\mathscr{B},\mu,T)$ and any aperiodic $(Y,\mathscr{C},\nu, S)$ there is a $\mathscr{B}-$measurable function $p:X\rightarrow\Z^{+}$ such that $T'(x)=T^{p(x)}(x)$ is invertible $\mu$-a.e. and $(X,\mathscr{B},\mu,T')$ is isomorphic to $(Y,\mathscr{C},\nu, S)$  \cite{AOW}. Finding a topological analogue to this theorem was the original impetus for this paper. 

While the result of Arnoux, Ornstein, and Weiss shows that almost no assumptions are needed to realize one measure-theoretic system as the speedup of another, there are restrictions that arise in the topological category. The situation closely mirrors that for orbit equivalence. Recall that two invertible dynamical systems $T:X \to X$ and $S:Y \to Y$ are \emph{orbit equivalent} if $(Y,S)$ is conjugate to $T':X \to X$ where $T$ and $T'$ have the same orbits. 
Dye's theorem says that any two ergodic transformations on non-atomic Lebesgue probability spaces are measure-theoretically orbit equivalent \cite{Dye}. Over 35 years later Giordano, Putnam, and Skau gave a characterization of when two minimal Cantor systems $(X,T)$ and $(Y,S)$ are topologically orbit equivalent \cite{GPS}. 

Here we briefly describe the dimension groups used in their characterization (see Section \ref{SectionDimGrp} for much more). The triple $\mathcal{G}(X,T)$ is defined as \linebreak $(G(X,T),G(X,T)_+,[\mchi_X])$ where $G(X,T)$ is the group of continuous integer-valued functions modulo functions of the form\linebreak $f-fT^{-1}$, $G(X,T)_+$ is the semi-group of equivalence classes of non-negative functions, and the distinguished order unit $[\mchi_X]$ is the equivalence class of the constant function 1 on $X$. The subgroup $\Inf(\mathcal{G}(X,T))$ is the set of functions which integrate to $0$ against every $T$-invariant Borel probability measure.

\begin{theorem}\textbf{\cite[Theorem $2.2$]{GPS}:}\label{GPS Theorem} Let $(X,T)$ and $(Y,S)$ be minimal Cantor systems. The following are equivalent:
	\begin{enumerate}
		\item $(X,T)$ and $(Y,S)$ are orbit equivalent.
		\item The dimension groups $\mathcal{G}(X,T)/\Inf(\mathcal{G}(X,T))$ and\linebreak $\mathcal{G}(Y,S)/\Inf(\mathcal{G}(Y,S))$, are order isomorphic by a map preserving the distinguished order unit.
		\item There exits a homeomorphism $F:X\rightarrow Y$ carrying the $T$-invariant probability measures onto the $S$-invariant probability measures.
	\end{enumerate}
\end{theorem}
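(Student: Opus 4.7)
The plan is to establish the cyclic chain of implications $(1)\Rightarrow(3)\Rightarrow(2)\Rightarrow(1)$, using the Bratteli--Vershik representation of minimal Cantor systems as the main machinery for the final, hardest step.

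For $(1)\Rightarrow(3)$, I would write the orbit equivalence as a homeomorphism $F:X\to Y$ such that $T':=F^{-1}\circ S\circ F$ shares orbits with $T$ on $X$, and then argue that $T$ and $T'$ must have identical sets of invariant Borel probability measures. The key observation is that every such invariant measure is determined by its values on clopen sets, and can be computed as a limit of normalized counts along Kakutani--Rokhlin towers; since these towers depend only on the orbit structure, the limits coincide for any two minimal homeomorphisms sharing orbits. Pushing measures forward along $F$ then yields the required measure-carrying homeomorphism.

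For $(3)\Rightarrow(2)$, I would use the evaluation pairing $\mathcal{G}(X,T)/\Inf(\mathcal{G}(X,T))\hookrightarrow\mathrm{Aff}(M_T(X))$ sending $[f]\mapsto\bigl(\mu\mapsto\int f\,d\mu\bigr)$, where $M_T(X)$ denotes the Choquet simplex of $T$-invariant Borel probability measures. By the very definition of $\Inf$ this map is injective; it is also order-preserving and carries the distinguished order unit $[\mchi_X]$ to the constant function $1$. A homeomorphism $F$ as in (3) induces an affine isomorphism $F^*:M_S(Y)\to M_T(X)$ between the simplices, and composition with $F^*$ identifies the images of the two quotient dimension groups inside $\mathrm{Aff}(M_T(X))$ as ordered groups with their respective order units, giving the desired isomorphism.

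The real work lies in $(2)\Rightarrow(1)$, which is the main obstacle. Here I would invoke the Bratteli--Vershik theorem to fix properly ordered Bratteli diagrams $B_X$ and $B_Y$ whose Vershik maps are conjugate to $(X,T)$ and $(Y,S)$ and whose associated dimension groups are $\mathcal{G}(X,T)$ and $\mathcal{G}(Y,S)$. Using the hypothesized order-isomorphism $\theta:\mathcal{G}(X,T)/\Inf(\mathcal{G}(X,T))\to\mathcal{G}(Y,S)/\Inf(\mathcal{G}(Y,S))$ preserving the distinguished order unit, I would perform a back-and-forth telescoping argument to construct, level by level, matching Kakutani--Rokhlin tower structures in $X$ and $Y$; gluing these level-wise matchings then produces a homeomorphism $F:X\to Y$ carrying $T$-orbits to $S$-orbits. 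The delicate point, and the technical heart of the argument, is that passage to the quotient by $\Inf$ allows the tower-matching at each level to be only \emph{approximate} rather than exact, and the back-and-forth construction must be organized carefully enough that these approximations paste together to a genuine homeomorphism; this is precisely why the conclusion is orbit equivalence rather than the strong orbit equivalence that would require an isomorphism of the full, unquotiented dimension groups.
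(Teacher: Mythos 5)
This statement is quoted background: the paper does not prove it, but cites it as \cite[Theorem 2.2]{GPS} (the relevant techniques reappear in the paper's own Main Theorems, which are proved by adapting the Giordano--Putnam--Skau and Glasner--Weiss arguments). So your attempt can only be measured against that standard proof, and against it your outline is the right skeleton but leaves the decisive step unproved.

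Two specific issues. First, in $(1)\Rightarrow(3)$ your stated mechanism is off: Kakutani--Rokhlin towers do \emph{not} ``depend only on the orbit structure'' --- they are built from the homeomorphism itself, and two orbit-equivalent maps have quite different towers. The correct (and standard) argument is the orbit-cocycle decomposition the paper itself uses for speedups: write $T'(x)=T^{n(x)}(x)$ with $n:X\to\Z$ Borel, decompose a Borel set $A$ as $\bigsqcup_k A\cap n^{-1}\{k\}$, and use $T$-invariance on each piece to get $\mu(T'A)=\mu(A)$. Your $(3)\Rightarrow(2)$ is essentially right, but it silently relies on the nontrivial fact that positivity in $\mathcal{G}(X,T)/\Inf$ is \emph{characterized} by strict positivity of $\mu\mapsto\int f\,d\mu$ on $M(X,T)$ --- i.e., that any class with everywhere-positive integral has a nonnegative (indeed clopen-indicator) representative. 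That is the Glasner--Weiss lemma (Corollary \ref{Glasner-Weiss} in this paper), and without it the ``identification of the images inside $\mathrm{Aff}(M_T(X))$ as ordered groups'' does not follow from injectivity alone.

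Second, and more seriously, $(2)\Rightarrow(1)$ is the entire content of the theorem and your paragraph is a statement of intent rather than a proof. ``Perform a back-and-forth telescoping argument'' omits every ingredient that makes the argument work: the partition-copying lemmas that transport a clopen partition of $X$ to one of $Y$ realizing prescribed classes in $G/\Inf$ (which again require the Glasner--Weiss exact-realization result, plus control of distinguished points so the two filtrations separate points and the resulting set map is a homeomorphism); the device for handling the single ``cut point'' of each tower sequence so that the constructed $F$ maps $T$-orbits onto $S$-orbits rather than merely forward half-orbits; and the verification that approximate matching modulo $\Inf$ at each stage still yields exact orbit correspondence in the limit. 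These are precisely the lemmas (analogues of Lemmas \ref{clopenpartition}, \ref{clopenpartition2}, \ref{refinetower}, \ref{speedup} and the recursive construction in the proof of Main Theorem \ref{mainstrong}) that occupy most of the paper's Sections 4 and 5 in the speedup setting. As written, the hard direction is asserted, not established.
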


In this paper, we show an analogous result but for speedups. Given unital ordered groups $\mathcal{G}$ and $\mathcal{H}$, we say that an epimorphism $\varphi:\mathcal{H} \twoheadrightarrow \mathcal{G}$ is \emph{exhaustive} if for every $h \in H_+$ and 
$g \in G$ satisfying $0 < g < \varphi(h)$, there is a $h' \in H$ such that $\varphi(h')=g$ and $0<h'<h$. 

\begin{main}\label{main} Let $(Y,S)$ and $(X,T)$ be minimal Cantor systems. The following are equivalent:
	\begin{enumerate}
		\item $(Y,S)$ is a speedup of $(X,T)$.
		\item There exists an exhaustive epimorphism\linebreak $\varphi:\mathcal{G}(Y,S)/\Inf(G(Y,S))\twoheadrightarrow\mathcal{G}(X,T)/\Inf(G(X,T))$ which preserves the distinguished order unit.
		\item There exists homeomorphism $F:X\rightarrow Y$, such that $F_{*}:M(X,T)\hookrightarrow M(Y,S)$ is an injection.
	\end{enumerate}
\end{main}

The unital ordered groups in both of these results arise from the study of $C^*$-crossed product algebras associated with the minimal Cantor systems. In particular, $(G(X,T),G(X,T)_+)$ is the ordered $K^0$-group for the crossed product algebra $C(X) \underset{T}{\ltimes} \Z$.
Giordano, Putnam and Skau provided a dynamical characterization of the isomorphism of these objects as well, via a relation called \emph{strong} orbit equivalence. Two minimal Cantor systems $(X,T)$ and $(Y,S)$ are strong orbit equivalent if there exists a homeomorphism $F:X \to Y$ such that
 $$FT(x) = S^{n(x)}F(x) \qquad \text{ and } \qquad FT^{m(x)}(x) = SF(x)$$
 where $m,n:X \to \Z$ have at most one point of discontinuity each.

\begin{theorem}{\bf \cite[Theorem ]{GPS}}\label{GPS Theorem Strong} Let $(X,T)$ and $(Y,S)$ be minimal Cantor systems. The following are equivalent:
	\begin{enumerate}
		\item $(X,T)$ and $(Y,S)$ are strongly orbit equivalent.
		\item The dimension group $\mathcal{G}(X,T)$ is order isomorphic to $\mathcal{G}(Y,S)$ by a map preserving distinguished order units.
		\item $C(X) \underset{T}{\ltimes} \Z$ is isomorphic to $C(Y)\underset{S}{\ltimes} \Z.$
	\end{enumerate}
\end{theorem}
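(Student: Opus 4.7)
The plan is to prove the three‑way equivalence by going around the cycle $(1)\Rightarrow(2)\Rightarrow(3)\Rightarrow(1)$, using the Bratteli--Vershik representation of minimal Cantor systems as the central bridge. Recall that any minimal Cantor system $(X,T)$ is conjugate to the Vershik map on a properly ordered Bratteli diagram, and that the dimension group of that diagram (read off as an inductive limit of $\Z^{|V_n|}$ along incidence matrices) coincides with $\mathcal{G}(X,T)$, the distinguished order unit being $[\mchi_X]$. This identification converts statements about $\mathcal{G}(X,T)$ into combinatorial statements about Bratteli diagrams and turns the associated Kakutani--Rokhlin partitions into concrete finite approximations out of which a conjugating homeomorphism can be built.

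For $(1)\Rightarrow(2)$, suppose $F:X\to Y$ implements a strong orbit equivalence via cocycles $n,m$ each with at most one point of discontinuity. The map $f\mapsto f\circ F^{-1}$ on continuous integer‑valued functions sends $T$‑coboundaries to $S$‑coboundaries: if $g=f-f\circ T^{-1}$, then $g\circ F^{-1}$ differs from $(f\circ F^{-1})-(f\circ F^{-1})\circ S^{-1}$ only on the forward $S$‑orbit of the single discontinuity point, which can be absorbed into a coboundary by a finite adjustment. A direct check that the induced map respects the positive cone and carries $[\mchi_X]$ to $[\mchi_Y]$ then yields the order isomorphism.

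For $(2)\Leftrightarrow(3)$, I would use that $\mathcal{G}(X,T)$ with its order unit is precisely the pointed ordered $K_0$‑group of the crossed product $C(X)\underset{T}{\ltimes}\Z$ (this is the Pimsner--Voiculescu computation together with the identification of the positive cone due to Putnam). Since these crossed products are simple unital AT‑algebras of real rank zero, Elliott's classification theorem upgrades any unit‑preserving order isomorphism of $K_0$‑groups to a $C^{*}$‑isomorphism, and conversely any such $C^{*}$‑isomorphism induces one on $K_0$. This handles both directions of $(2)\Leftrightarrow(3)$ at once.

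The main obstacle is $(2)\Rightarrow(1)$, and I expect this to absorb most of the work. Given an order‑unit‑preserving isomorphism $\psi:\mathcal{G}(X,T)\to\mathcal{G}(Y,S)$, represent both systems as Vershik maps on ordered Bratteli diagrams; appeal to Bratteli's theorem that $\psi$ is realized by an intertwining sequence of telescopings of the two diagrams. The intertwining produces, level by level, a bijection between the clopen Kakutani--Rokhlin tower partitions of $X$ and $Y$ that is compatible with refinements, and in the inverse limit a homeomorphism $F:X\to Y$ sending $T$‑towers to $S$‑towers. Inside each tower $F$ already conjugates $T$ to $S$, so the orbit cocycles can fail to be continuous only where the tower structure fails to mesh—namely at the unique point lying in the top of every tower for $X$ (and its image under $F$). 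The delicate step is arranging the basepoints and edge orderings so that each of the two cocycles picks up at most one discontinuity, which is exactly what strong orbit equivalence demands; this is where one must be careful in translating Bratteli's algebraic lifting into the dynamical construction.
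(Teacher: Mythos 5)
The paper does not prove this theorem: it is quoted verbatim from Giordano--Putnam--Skau as background (their Theorem 2.1), so there is no internal proof to compare against. Your sketch follows the standard GPS route --- the $K_0$ identification plus Elliott classification of simple AT-algebras of real rank zero for $(2)\Leftrightarrow(3)$, and the Bratteli--Vershik intertwining argument for $(2)\Rightarrow(1)$ --- and is a fair outline of their argument.

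One step in your $(1)\Rightarrow(2)$ is stated incorrectly, though the idea is salvageable. You say the discrepancy between $g\circ F^{-1}$ and the $S$-coboundary of $f\circ F^{-1}$ is ``supported on the forward $S$-orbit of the discontinuity point'' and can be ``absorbed by a finite adjustment.'' A nonzero integer-valued function supported on an orbit (or on any finite set of non-isolated points) is not continuous, so this cannot be the mechanism. The correct argument --- which is exactly what this paper carries out for the analogous implication $(1)\Rightarrow(3)$ of its Main Theorem 2 --- is to normalize $f$ so that it vanishes at the image of the discontinuity point, write it as a finite sum of indicators of clopen sets avoiding that point, and then decompose each such set by the (clopen, away from the discontinuity) level sets of the orbit cocycle; each piece $k\,\mathbbm{1}_{A(k)}-k\,\mathbbm{1}_{A(k)}\circ S^{-k}$ is then visibly a coboundary. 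You should also note that Elliott's classification requires the full invariant including $K_1$; this is harmless here only because $K_1$ of such a crossed product is always $\Z$.
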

In our second main theorem, we prove the following a speedup analog of Theorem~\ref{GPS Theorem Strong}. A \emph{strong speedup} is a speedup where the jump function has at most one point of discontinuity.
\begin{main}\label{mainstrong}
	Let $(Y,S)$ and $(X,T)$ be minimal Cantor systems. The following are equivalent:
	\begin{enumerate}
		\item $(Y,S)$ is a strong speedup of $(X,T)$.
		\item There is an exhaustive epimorphism $\varphi :\mathcal{G}(Y,S) \twoheadrightarrow \mathcal{G}(X,T)$ preserving the distinguished order unit.
		\item There is a homeomorphism $h:X \to Y$ such that if $f \in \cob(Y,S)$ then $fh \in \cob(X,T)$.
	\end{enumerate}
\end{main}

The paper is organized as follows. In Section 2 we provide relevant background on minimal Cantor systems, including Kakutani-Rohlin towers and dimension groups. In Section 3, we prove some basic facts about speedups and the jump function associated to them. In Section 4 we prove Main Theorem 2 where we characterize strong speedups, and in Section 5 we prove Main Theorem 1 for general speedups. The proofs for the two Main Theorems generally follow the same format. We prove Main Theorem 2 first as it requires fewer preliminary lemmas than Main Theorem 2. The proof of Main Theorem 1 follows the same format and we will highlight the differences and analogs statements. For the proofs, we follow many of the same techniques as in \cite{GPS} and \cite{Glasner-Weiss}, adapted to the speedup situation. 

In Section 6 we define and explore the relation of \emph{speedup equivalence}. That is, when two minimal Cantor systems are speedups of one another. It follows from our main theorem that if two minimal Cantor system are orbit equivalent then they are speedup equivalent. We show in Theorem \ref{speedupequivalence} that the converse is true when the systems are have finitely many ergodic invariant measures (in particular when they are uniquely ergodic). However, Julien Melleray's forthcoming paper \emph{Dynamical Simplicies and Fra\"iss\'e Theory} showed speedup equivalence and orbit equivalence are different relations in general \cite{Melleray}.
\section{Acknowledgments}
The authors thank the referee for their close reading of our paper and for their insightful suggestions. Additionally, the first author dedicates this paper to Laura Brade. Your unwavering belief in me and support of me made this paper possible. Thank you.
\section{Preliminaries}
\subsection{Minimal Cantor systems}
Throughout this paper $X$ will always be taken to be a Cantor space, that is a compact, metrizable, perfect, zero-dimensional space. A \emph{Cantor system} will consist of a pair $(X,T)$ where $X$ is a Cantor space and $T:X\rightarrow X$ is a homeomorphism. For any $x \in X$, we denote the \emph{orbit} of $x$ by 
$\mathcal{O}_{T}(x)=\{T^{n}(x):x\in\Z\}$ and the \emph{forward orbit} of $x$ by 
$\mathcal{O}^+_{T}(x)=\{T^{n}(x):x\in\Z^+\}$. We will restrict our attention in this paper to 
\emph{minimal Cantor systems}, i.e., Cantor systems $(X,T)$ in which every orbit is dense. 
It is well-known (see \cite{Walters}) that for a Cantor system $(X,T)$, the density of all orbits $\mathcal{O}_{T}(x)$ is equivalent to the density of all forward orbits $\mathcal{O}^+_{T}(x)$. 

Fix a minimal Cantor system $(X,T)$ and let $M(X)$ denote the collection of all Borel probability measures on $X$. We are interested in the set $M(X,T) \subset M(X)$ of those measures $\mu \in M(X)$ which are $T$-invariant, i.e., where
$\mu(T^{-1}(A))=\mu(A)$ for every Borel subset $A$. Because we are working with minimal Cantor systems, all $\mu \in M(X,T)$ have full support, and every $\mu \in M(X,T)$ is non-atomic.

The Bogolioubov-Krylov Theorem implies $M(X,T)\neq\emptyset$. Moreover, the set $M(X,T)$ is a compact, convex subset of $M(X)$. Further, the Ergodic Decomposition Theorem states every measure $\mu\in M(X,T)$ can be uniquely represented as an integral against a measure $\tau$ which is fully supported on the ergodic measures of $M(X,T)$.

\subsection{Kakutani-Rohlin towers}

We will consider minimal Cantor systems through a refining sequence of Kakutani-Rokhlin tower partitions. These tower partitions, defined below, were instrumental in relating minimal Cantor systems to Bratteli diagrams, and hence dimension groups, and $C^*$-algebras \cite{HPS, GPS,Glasner-Weiss}. We include proofs of some basic facts here for completeness. 
\begin{defn}
	A \emph{{Kakutani-Rokhlin tower partition}} (or \emph{KR-partition}) of a minimal Cantor system $(X,T)$ is a clopen partition $\mathcal{P}$ of $X$ of the form
	$$
	\mathcal{P}=\{T^{j}C(i): 1 \leq i\leq I,\, 0\le j<h(i)\}
	$$
	where $I \geq 1$, and for each $1 \leq i \leq I$, $C(i)$ is a clopen set and $h(i)$ is a positive integer. 
\end{defn}
Fixing $i \in \{1,\ldots ,I\}$, we refer to the collection of sets $\{T^{j}C(i):0\le j<h(i)\}$ as the $i^{th}$ \emph{column} of the partition
$\mathcal{P}$, and $h(i)$ as the height of this column. The set $T^{j}C(i)$ is the $j^{th}$ \emph{level} of the $i^{th}$ column. 
Furthermore, we refer to the union $\Union C(i)$ as the base of $\mathcal{P}$, 
$$
\Base (\mathcal{P}) =\Union_{1 \leq i\leq I}C(i)
$$
and the union $\Union T^{h(i)-1}C(i)$ as the top of $\mathcal{P}$,
$$\Top (\mathcal{P}) = \Union_{0 \leq i \leq I}T^{h(i)}C(i)
$$
as the \emph{top} of $\mathcal{P}$. 
Thus in the $i^{th}$ column of a KR-partition, if $0\leq j<h(i)$ then $T$ maps the $j^{th}$ level of the column onto the $(j+1)^{st}$ level of that same column. Moreover, $T$ maps $\Top (\mathcal{P})$ homeomorphically onto $\Base (\mathcal{P})$.

The following are three basic facts about the construction of KR-partitions which we will use extensively.

\begin{lemma} \label{returntime}
	Let $(X,T)$ be a minimal Cantor system and let $N$ be given. There is an $\varepsilon>0$ such that if $A$ is a clopen set with $\diam (A) < \varepsilon$ then for all $x \in A$, $r_A(x)>N$. 
\end{lemma}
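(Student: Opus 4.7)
The plan is to reduce the problem to a uniform lower bound on the displacement $d(x, T^j(x))$ for $1 \le j \le N$. First I would observe that because $(X,T)$ is a minimal Cantor system and $X$ is perfect (in particular, infinite), no iterate $T^j$ with $j \ge 1$ can have a fixed point: a fixed point of $T^j$ would have a finite $T$-orbit, contradicting the fact that its orbit must be dense in the uncountable set $X$.

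Next I would invoke continuity and compactness. For each $j \in \{1, 2, \ldots, N\}$, the map $x \mapsto d(x, T^j(x))$ is continuous on the compact space $X$, and by the previous paragraph it is strictly positive everywhere. Hence it attains a positive minimum
$$\delta_j := \min_{x \in X} d(x, T^j(x)) > 0.$$
Setting $\varepsilon := \min_{1 \le j \le N} \delta_j$, we have $\varepsilon > 0$.

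Finally I would verify the claim. Suppose $A$ is clopen with $\diam(A) < \varepsilon$, and let $x \in A$. For any $j$ with $1 \le j \le N$,
$$d(x, T^j(x)) \ge \delta_j \ge \varepsilon > \diam(A),$$
so $T^j(x) \notin A$. Therefore the first return time satisfies $r_A(x) > N$, as required. (Note that $r_A(x)$ is finite for every $x \in A$ by minimality, so the statement $r_A(x) > N$ is meaningful.)

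I do not anticipate a serious obstacle here; the only subtle point is the non-existence of periodic points, which rests on $X$ being infinite (guaranteed by the Cantor/perfect hypothesis) together with minimality. Everything else is a standard compactness-and-continuity argument.
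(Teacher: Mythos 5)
Your proof is correct and rests on the same ingredients as the paper's (aperiodicity of $T$, continuity of the iterates, and compactness of $X$); the paper phrases the compactness step as a sequential argument by contradiction, while you extract the uniform positive lower bound $\min_{1\le j\le N}\min_{x\in X} d(x,T^j(x))$ directly, which is just the contrapositive-free form of the same argument.
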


\begin{proof}
	Suppose not. Then for all $k \geq 1$, there is a clopen set $A_k$ with $\diam(A_k)<\frac{1}{k}$ and $x_k$ with $1 \leq r_{A_k}(x_k) \leq N$. By passing to a subsequence, we may assume there is an $1 \leq r\leq N$ such that $r_{A_k}(x_k)=r$ for all $k \geq 1$. Thus we have $d(T^r(x_k),x_k)< \frac{1}{k}$ for all $k \geq 1$. Let $x$ be a limit point for the sequence $\{x_k\}$, the continuity of $T^r$ implies $T^r(x)=x$. But $T$ has no periodic points, a contradiction. 
\end{proof}

\begin{lemma}\label{simpletower}
	Let $(X,T)$ be a minimal Cantor system and let $C \subset X$ be a nonempty clopen subset of $X$. Let $\mathcal{Q}$ be any clopen partition of $X$. Then there is a clopen KR-partition $\mathcal{P}$ of $X$ which refines $\mathcal{Q}$ and where $\Base(\mathcal{P}) = C$. 
\end{lemma}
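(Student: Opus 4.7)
The plan is to build the KR-partition by using $C$ as the base and cutting $X$ into columns according to the first-return map of $T$ to $C$, then refining each column further via $\mathcal{Q}$-itineraries. First I would show that the first-return time function $r_C \colon C \to \Z^+$ is bounded. Since $(X,T)$ is minimal, every forward orbit is dense, so every $x \in X$ satisfies $T^k(x) \in C$ for some $k \geq 0$; equivalently $X = \bigcup_{k \geq 0} T^{-k}(C)$. Since $C$ is clopen this is an open cover of the compact space $X$, so there is $N$ with $X = \bigcup_{k=0}^{N} T^{-k}(C)$. In particular every point of $C$ returns to $C$ within $N$ iterates, so $r_C \leq N$.

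Next I would form the initial KR-partition. Since $C_n := \{x \in C : r_C(x) = n\}$ equals $C \cap T^{-n}(C) \cap \bigcap_{k=1}^{n-1} T^{-k}(X \setminus C)$, each $C_n$ is clopen, and $C = \bigsqcup_{n=1}^{N} C_n$. Then $\mathcal{P}_0 := \{T^j C_n : 1 \leq n \leq N,\ 0 \leq j < n\}$ is a candidate KR-partition with $\Base(\mathcal{P}_0) = C$. I would verify coverage and disjointness: given $x \in X \setminus C$, use backward minimality to find the smallest $m \geq 1$ with $y := T^{-m}(x) \in C$, whence $r_C(y) > m$ and $x = T^m y$ lies in the level $T^m C_{r_C(y)}$. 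Disjointness follows from the fact that if $T^{j_1} y_1 = T^{j_2} y_2$ with $y_i \in C$, $j_i < r_C(y_i)$, and say $j_1 \leq j_2$, then $T^{j_2 - j_1}(y_2) = y_1 \in C$ forces $j_2 - j_1 = 0$ and hence $y_1 = y_2$.

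Finally, to make $\mathcal{P}$ refine $\mathcal{Q}$, I would subdivide each $C_n$ according to the $\mathcal{Q}$-itinerary of its points. For each $\vec{Q} = (Q_0,\ldots,Q_{n-1}) \in \mathcal{Q}^n$, set
$$
C_n^{\vec{Q}} := \bigcap_{j=0}^{n-1} T^{-j}(Q_j) \cap C_n,
$$
which is clopen and gives $C_n = \bigsqcup_{\vec{Q}} C_n^{\vec{Q}}$. Replacing each column $\{T^j C_n : 0 \leq j < n\}$ by the union of columns $\{T^j C_n^{\vec{Q}} : 0 \leq j < n\}$ over all $\vec{Q}$ yields a clopen KR-partition $\mathcal{P}$ with base $C$, in which each level $T^j C_n^{\vec{Q}}$ is contained in $Q_j \in \mathcal{Q}$, so $\mathcal{P}$ refines $\mathcal{Q}$.

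The main technical point is establishing the boundedness of $r_C$, which is exactly where minimality enters; everything else is a matter of bookkeeping with clopen sets. A small subtlety to check carefully is that the backward orbit of every $x \in X$ also meets $C$, which follows from minimality of $T^{-1}$ (orbits of $T$ and $T^{-1}$ coincide) applied to the clopen set $C$.
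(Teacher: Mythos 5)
Your proof is correct and follows essentially the same route as the paper's: build the tower over the first-return map to $C$, observe that the return time is bounded with clopen level sets, and refine each column by $\mathcal{Q}$-itineraries, i.e., by intersecting the base sets with $\bigvee_{j=0}^{n-1}T^{-j}\mathcal{Q}$. (The only microscopic slip is that the cover $X=\bigcup_{k=0}^{N}T^{-k}(C)$ should be applied to $T(x)$ rather than to $x\in C$ itself to bound the return time, yielding $r_C\le N+1$; this changes nothing.)
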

\begin{proof}
	For $x \in C$, define the first return time of $x$ to $C$ as follows $r(x) = \inf \{n>0 : T^n(x) \in C\}$. 
	Since $T$ is minimal, $r(x)$ is a well-defined positive integer for all $x \in X$ and the function 
	$r: C \to \mathbb{N}$ is uniformly bounded by an integer $N\geq 1$. 
	By the continuity of $T^n$ for each $n \geq 1$, each set $r^{-1}\{n\}$ is closed.  Therefore, 
	$C$ is partitioned into finitely many nonempty closed sets $r^{-1}\{n\}$, which therefore must be clopen. 
	
	For each $n \in \{1,\ldots ,N\}$, partition $r^{-1}\{n\}$ into clopen sets by considering the intersection of $r^{-1}\{n\}$ with elements of the partition $\bigvee_{j=0}^{n-1} T^{-j} \mathcal{Q}$. This results in a finite collection of nonempty, pairwise disjoint clopen sets $C(1),  \ldots , C(I)$ whose union is $C$. 
	
	Fix $i \in \{1,\ldots,I\}$. The set $C(i)$ is a  subset of  $r^{-1}\{n\}$ for some $n \geq 1$; set $h(i)=n$. 
	Since $C(i)$ is a subset of an element of  $\bigvee_{j=0}^{n-1} T^{-j} \mathcal{Q}$, if
	$0 \leq j < h(i)$, then $T^jC(i)$ is a subset of an element of $\mathcal{Q}$. Therefore, 
	$$\mathcal{P}=\{T^jC(i) : 1 \le i\le  I, 0 \leq j < h(i)\}$$ is the desired partition. 	
\end{proof}

\begin{lemma} \label{refinetower}
	Let $(X,T)$ be a minimal Cantor System and let 
	$\mathcal{P} = \{T^j C(i) : 1 \le i \le I, 0 \leq j < h(i)\}$ 
	be a KR-partition for $(X,T)$. Suppose $x_0 \in T^{h(1)-1}C(1)$
	and $T(x_0) \in C(2)$ and $\varepsilon>0$. Then there is a KR-partition 
	$\mathcal{Q} = \{T^j B(i) : 1 \le i \le I', 0 \leq j < k(i)\}$ such that 
	\begin{enumerate}
		\item $\mathcal{Q}$ refines $\mathcal{P}$,
		\item $\diam(T^j B(i))<\varepsilon$ for all $i,j$,
		\item $\Base(\mathcal{Q}) \subset C(2)$ and $\Top(\mathcal{Q}) \subset T^{h(1)-1}C(1)$
		\item $x_0 \in T^{h(1)-1}B(1)$ and $T(x_0) \in B(2)$. 
	\end{enumerate}
\end{lemma}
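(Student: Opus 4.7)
The plan is to cut a suitable clopen set $C$ out of $C(2)$ around $T(x_0)$, apply Lemma \ref{simpletower} to produce a KR-partition with base $C$ that simultaneously refines $\mathcal{P}$ and a small-diameter partition, and then, if necessary, subdivide one base piece to achieve the column labeling required by condition $(4)$.

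First, I set $A = T^{-1}(C(2)) \cap T^{h(1)-1}C(1)$. This is a clopen set containing $x_0$, so $C := T(A)$ is clopen, contains $T(x_0)$, satisfies $C \subset C(2)$, and its preimage under $T$ is $T^{-1}(C) = A \subset T^{h(1)-1}C(1)$. Next, choose a clopen partition $\mathcal{R}$ of $X$ that refines $\mathcal{P}$ and has every element of diameter less than $\varepsilon$; this exists because $X$ is compact, metrizable and zero-dimensional. Applying Lemma \ref{simpletower} with base $C$ and refining partition $\mathcal{R}$ yields a KR-partition $\mathcal{Q}_0 = \{T^j B_0(i) : 1 \le i \le I_0,\, 0 \le j < k_0(i)\}$ with $\Base(\mathcal{Q}_0)=C$ and every level contained in an element of $\mathcal{R}$ (hence of $\mathcal{P}$ and of diameter less than $\varepsilon$). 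Since $T$ sends $\Top(\mathcal{Q}_0)$ bijectively onto $\Base(\mathcal{Q}_0)=C$, we have $\Top(\mathcal{Q}_0) = T^{-1}(C) = A \subset T^{h(1)-1}C(1)$. Thus $\mathcal{Q}_0$ already satisfies conditions $(1)$, $(2)$, and $(3)$.

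All that remains is to adjust labels to meet $(4)$. Let $\alpha$ be the index with $T(x_0)\in B_0(\alpha)$, and let $\beta$ be the unique index with $b := T^{-(k_0(\beta)-1)}(x_0) \in B_0(\beta)$, recording that $x_0$ sits at the top of column $\beta$. If $\alpha \neq \beta$, just relabel columns so that $B(1)=B_0(\beta)$ and $B(2) = B_0(\alpha)$, and take $\mathcal{Q}=\mathcal{Q}_0$. The only minor obstacle is the case $\alpha=\beta$, when $T(x_0)$ and the distinct point $b$ both lie in the single base piece $B_0(\alpha)$. Because $X$ is Cantor, I can choose a clopen neighborhood $B(2)$ of $T(x_0)$ inside $B_0(\alpha)$ that excludes $b$, set $B(1):=B_0(\alpha)\setminus B(2)$, and split the column of height $k_0(\alpha)$ into two columns of the same height by propagating this split through the levels $T^j B_0(\alpha)$; the other columns are left unchanged and re-indexed. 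Since $B(1), B(2) \subset B_0(\alpha)$, every new level is contained in the corresponding old level, so the refinement of $\mathcal{P}$ and the $\varepsilon$ diameter bound are preserved, the base is still inside $C(2)$, the top is still inside $T^{h(1)-1}C(1)$, and by construction $x_0 \in T^{k(1)-1}B(1)$ and $T(x_0) \in B(2)$, as required.
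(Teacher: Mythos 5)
Your proof is correct and takes essentially the same route as the paper: both pull $C(2)$ back through $T$ against $T^{h(1)-1}C(1)$ to obtain a base $C\subset C(2)$ with $T^{-1}C\subset T^{h(1)-1}C(1)$, then invoke Lemma \ref{simpletower} against a refinement of $\mathcal{P}$ by an $\varepsilon$-fine partition. The only (immaterial) difference is how condition (4) is secured --- the paper pre-refines by $\{D,D^c\}$ with $D$ chosen via the return time $r$ of $T(x_0)$ so that $x_0\notin T^{r-1}(D)$, whereas you split the offending column afterwards; note that both hinge on aperiodicity, so you should say explicitly that your point $b$ differs from $T(x_0)$ because $T^{k_0(\alpha)}(x_0)\neq x_0$.
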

\begin{proof}
	Let $U, V$ be clopen sets such that $x_0 \in U \subset T^{h(1)-1}C(1)$ and 
	$T(x_0) \in V \subset C(2)$. Set $B = V \cap T(U)$. Then 
	$T(x_0) \in B \subset C(2)$ and $x_0 \in  T^{-1}B \subset T^{h(1)-1}C(1)$. 
	Let $r>0$ be the first return time of $T(x_0)$ to $B$. Since $T^r(x_0) \neq x_0$, 
	there is a clopen set $D \subset B$ containing $T(x_0)$ such that $x_0 \not \in T^{r-1}(D)$. 
	
	Let $\mathcal{R}$ be any partition with diameter less than $\varepsilon$. 
	Apply Lemma \ref{simpletower} to the set $B$ and the partition $\mathcal{P} \vee \{D,D^c\} \vee \mathcal{R}$ to obtain a KR-partition $\mathcal{Q}$ with base $B$. Properties (1)-(3) for $\mathcal{Q}$ follow. Because the $\mathcal{Q}$ refines $\{D,D^c\}$, the column that contains the point $T(x_0)$ does not also contain $x_0$. Therefore, after re-indexing the sets we can assume Property (4) as well. 
\end{proof}

\subsection{Dimension groups} \label{SectionDimGrp}
Like the Giordano, Putnam and Skau results, our main theorems are expressed in terms of the unital dimension group $\mathcal{G}(X,T)$. These partially ordered groups are examples of dimension groups, direct limits of ordered groups $(\Z^{n_k},(\Z_+)^{n_k})$. Here we will give an abstract definition of dimension groups, which as shown by Effros, Handelman and Shen is equivalent \cite{EHS}. The relevant definitions and facts are briefly summarized, see \cite{HPS},\cite{GPS},\cite{CANT} for much more. 

\begin{defn} A \emph{{partially ordered group}} is a countable Abelain group $G$ together with a special subset denoted $G_{+}$, referred to as the \emph{{positive cone}}, satisfying the following:
	\begin{enumerate}
		\item $G_{+}+G_{+}\subseteq G_{+}$
		\item $G_{+}-G_{+}=G$
		\item $G_{+}\intersect(-G_{+})=\{0\}$
	\end{enumerate}
\end{defn}
Given $a,b\in G$ we will write
$$
a\le b\text{ if } b-a\in G_{+} \qquad \text{ and } \qquad
a < b\text{ if } b-a\in G_{+}\setminus\{0\}.
$$
\begin{defn}
We will further require that our partially ordered Abelian groups be \emph{unperforated} by which we mean: if $a\in G$ and $na\in G_{+}$ for some $n\in\Z^{+}$ then $a\in G_{+}$.
\end{defn} 

\begin{defn}
	A partially ordered group is said to satisfy the \emph{{Riesz interpolation property}} if given $a_{1},a_{2},b_{1},b_{2}\in G$ with $a_{i}\le b_{j}$ for $i,j=1,2$, then there exists $c\in G$ such that 
	$$
	a_{i}\le c\le b_{j} \text{ for }i,j=1,2.
	$$
\end{defn}
\begin{note}
	 We will take the following as a definition, but it is really a theorem of Effros, Handelman and Shen, \cite{EHS}.
\end{note}
\begin{defn}
	A \emph{{dimension group}} is an unperforated partially ordered group $(G,G_{+})$ which satisfies the Riesz interpolation property.
\end{defn}
\begin{defn}
	An \emph{order unit} of a dimension group $(G,G_{+})$ is an element $u$ of $G_{+}$ with the property that for all positive $g\in G$ there exists a positive integer $N$ such that $g\le Nu$.
\end{defn}
\begin{defn}
	A dimension group is \emph{simple}  if every nonzero positive element is an order unit.
\end{defn}
\begin{defn}
	Given two dimension groups $(G,G_+)$ and $(H,H_+)$, and order units $u \in G$ and $v \in H$,  
	by a \emph{{unital dimension group isomorphism}} from $(G,G_+,u)$ to $(H,H_+,v)$, we mean a group isomorphism 
	$\varphi:G \to H$ such that 
	\begin{enumerate}
		\item $\varphi(G_+) = H_+$
		\item $\varphi(u) = v$.
	\end{enumerate}
\end{defn}

While unital ordered isomorphisms of $\mathcal{G}(X,T)$ and $\mathcal{G}(Y,S)$ play an important role in the characterization of minimal Cantor systems up to orbit equivalence, the relevant relation for us is the existence of an \emph{epimorphism} from one unital ordered group to another.  

\begin{defn}
	Given two dimension groups $(G,G_+)$ and $(H,H_+)$, and order units $u \in G$ and $v \in H$, 
	by a \emph{{unital dimension group epimorphism}} from $(G,G_+,u)$ onto $(H,H_+,v)$, we mean 
	a group epimorphism  $\varphi:G \to H$ such that 
	\begin{enumerate}
		\item $\varphi(G_+) \subseteq H_+$
		\item $\varphi(u) = v$.
	\end{enumerate}
	
\end{defn}

As it turns out, we will require one additional order property for the unital dimension group epimorphisms that arise in this paper. 

\begin{defn}
	Let $\mathcal{G}=(G,G_+,u)$ and $\mathcal{H}=(H,H_+,v)$ be unital ordered groups. 
	We say that a unital dimension group epimorphism from $\mathcal{G}$ to $\mathcal{H}$ is 
	\emph{exhaustive} if for every $g \in G_+$ and 
	$h \in H$ satisfying $0 < h < \varphi(g)$, there is a 
	$g' \in G$ such that $\varphi(g')=h$ and $0<g'<g$. 
\end{defn}

Note that every dimension group isomorphism is an exhaustive epimorphism. Further, a unital dimension group epimorphism having the exhaustive property implies $\varphi(G_+)=H_+$. 

We finish this subsection by providing several examples to illustrate the notion of an exhaustive epimorphism. The examples will be of the following forms. Any ordered group of these forms represent a simple dimension group. 

\begin{enumerate}
	\item $(G_1,(G_1)_+)$ where $G_1$ is a countable dense subgroup of $\mathbb{R}^2$, and $(G_1)_+ = \{ (x,y) \in G_1 : x>0 \text{ and } y>0\} \cup \{(0,0)\}$. 
	\item $(G_2,(G_2)_+)$ where $G_2$ is a countable dense subgroup of $\mathbb{R}^2$, and $(G_2)_+ = \{ (x,y) \in G_2 :  y>0\} \cup \{(0,0)\}$. 
	\item $(G_3,(G_3)_+)$ where $G_3$ is a countable dense subgroup of $\mathbb{R}$, and $(G_3)_+ = \{ x \in \mathbb{R} :  x \geq 0\}$. 
\end{enumerate}
If the example is of the type $(G_1,(G_1)_+)$ or $(G_3,(G_3)_+)$ above, we will say that the subset of $\R^2$ or $\R$ has the usual ordering. If it is of the type $(G_2,(G_2)_+)$ above, we will say that the ordering is given by the second coordinate. Because these are all simple, any positive element can play the role of the distinguished order unit.

\begin{example}\label{exh:ex1}
Consider the group $G = \Q \times \Q$ with the usual ordering and the map $\varphi: (G,G_+) \to (G,G_+)$ where $\varphi(x,y) = \left( \frac{2}{3} x + \frac{1}{3} y,\frac{1}{3} x + \frac{2}{3} y\right)$. This is a bijection from $G$ to itself with $\varphi(G_+) \subsetneq G_+$, and thus not exhaustive. 
\end{example}

\begin{example}\label{exh:ex2}
Consider the group $G_1 = \Q \times \Q$ with the usual ordering and $G_2 = \Q \times \Q$ with ordering given by the second coordinate. Consider the map $\varphi: (G_1,(G_1)_+) \to (G_2,(G_2)_+)$ where $\varphi(x,y) = \left( x-y,x+y\right)$. This is a bijection from $G_1$ to $G_2$, and $\varphi((G_1)_+) \subset (G_2)_+$. However, this is not exhaustive. For example, $(-1,1) \in (G_2)_+$, but $\varphi^{-1}(-1,1) = (0,1) \notin (G_1)_+$. 

On the other hand, let us consider these same ordered groups modulo their respective infinitesimal subgroups, which we denote as $(H_1,(H_1)_+)$ and $(H_2,(H_2)_+)$. Then $(G_1,(G_1)_+) = (H_1,(H_1)_+)$, but $(H_2,(H_2)_+)$ is isomorphic to $(\Q , \Q_+)$ as any element of the form $(x,0)$ is an infinitesimal in $(G_2,(G_2)_+)$. Then the induced map from $\varphi$ from $(H_1,(H_1)_+)$ to $(\Q,\Q_+)$, which we denote $\hat{\varphi}$, has the formula $\hat{\varphi}(x,y) = x+y$, and is an exhaustive epimorphism. So in particular, one cannot deduce the exhaustive property of a group epimorphism from the induced map on the groups modulo infinitesimals.
\end{example}

\begin{example}\label{exh:ex3}
Suppose $\alpha, \beta$ are irrational numbers that are not rationally related. 
Consider the group $G_1 = \{ (a \alpha + b, a \alpha + c \beta + d) \in \mathbb{R}^2 : a,b,c,d \in \Z \}$ 
with the usual ordering. Let $\varphi : G_1 \to \R$ be projection onto the second coordinate, let $G_3 = \varphi(G_1) \subset \R$ with the usual ordering. Note that in this case $\varphi ((G_1)_+) = (G_3)_+$. 

Now fix $c,d \in \Z$ such that $0 < c \beta + d < 1$. Let $h = c\beta +d \in G_3$, and $g = (1,1) \in G_1$. Then we have $0 < h < \varphi(g)$. However, $\varphi^{-1}\{h\} = \{(b,c \beta + d) : b \in \Z \}$. If $(b,c \beta + d) > (0,0)$ then $b>0$, but if $(b,c \beta + d) < (1,1)$ then $b<1$. Therefore, there is no element $g'$ in $G_1$ satisfying both $\varphi(g') = h$ and $0<g'<g$. 
\end{example}

\begin{remark}
Note that in Example \ref{exh:ex3} the kernel of $\varphi$ is $\{0\} \times \Z$. In particular, $\ker \varphi$ is a discrete subset of $\R^2$. For the setting where $G_1$ is a dense subset of $\R^2$ and $G_3$ is a dense subset of $\R$ each with the usual ordering, one can prove that a group epimorphism $\varphi:G_1\to G_3$ is exhaustive if and only if $\ker \varphi$ is a dense subset of a line in $\R^2$. There are certainly generalizations of this fact, but a complete characterization of when an ordered group epimorphism is exhaustive appears to be subtle and is beyond the scope of this paper. It could be that the key to such a characterization is to consider these ordered groups modulo infinitesimals as dense subsets of affine maps on compact sets as suggested by the referee of this paper. With this perspective, an epimorphism of ordered groups is the restriction of an affine map on a space to a subset.

\end{remark}

\subsection{Dynamical dimension groups}
For a minimal Cantor system $(X,T)$, consider  $C(X,\Z)$, the group of continuous integer-valued functions on $X$. Let $\cob(X,T)$ denote the subgroup of \emph{$T$-coboundaries}, i.e. $\cob(X,T)=\{f-f\circ T:f\in C(X,\Z)\}$.
Furthermore, define
$$
G(X,T)=C(X,\Z)/\cob(X,T).
$$ 
Next we define the \emph{positive cone} $G(X,T)_+ \subset G(X,T)$ to be
$$
G(X,T)_{+}=\{[f]: f\in C(X,\Z), f(X) \subset \Z_+\}.
$$ 
and the distinguished order unit $[\mchi_X]$ to be the equivalence class of the constant function one. 
The triple $\mathcal{G}(X,T) = (G(X,T),G(X,T)_+,[\mchi_X])$ forms a unital dimension group and further,
every simple, unital dimension group except $(\Z,\Z_+,1)$ arises as $(G(X,T),G(X,T)_+,[\mchi_X])$ for some minimal Cantor system $(X,T)$ as shown by Giordano, Putnam, and Skau, \cite{GPS}.

The second unital dimension group we will consider is a quotient of $\mathcal{G}(X,T)$ above. 
Set $$\Inf(X,T) = \left\{f \in C(X,\Z) : \int f d \mu = 0 \text{ for all } \mu \in M(X,T) \right\}$$
and notice $ \cob(X,T)\subset \Inf(X,T)$. We obtain another unital dimension group by considering the group
$G(X,T)/\Inf(X,T)$, the positive cone given by the equivalence classes of nonnegative functions, and the 
distinguished order unit given by the equivalence class of the constant function one. We will denote this 
unital dimension group by $\mathcal{G}(X,T)/\Inf(X,T)$.

\section{Speedup Basics}

Here we consider some basic properties of a speedup of a minimal Cantor system. 

\begin{defn}
Given minimal Cantor systems $(X,T)$ and $(Y,S)$, we say that $(Y,S)$ is a \emph{speedup} of $(X,T)$ if there is a function $p:X\rightarrow\mathbb{Z}^{+}$ such that $(Y,S)$ is topologically conjugate to $(X,T')$ where $T'(x)=T^{p(x)}(x)$ for all $x \in X$.
\end{defn}

We will refer to the function $p$ above a jump function for the speedup. While we place no continuity conditions on the jump function in the definition above, the assumption that $(X,T')$ is conjugate to a minimal Cantor system $(Y,S)$ implies that $(X,T')$ is also a minimal Cantor system, which implies that $p$ is lower semicontinuous as seen below. 

\begin{prop} Suppose $(X,T)$ is a minimal Cantor system. Let $p:X\rightarrow\mathbb{Z}^{+}$ and suppose that $T'(x)=T^{p(x)}(x)$ is continuous. Then $p$ is lower semicontinuous.
\end{prop}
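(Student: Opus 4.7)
The plan is to prove the contrapositive via sequential characterization: if $p$ fails to be lower semicontinuous at some $x_0 \in X$, then $T'$ fails to be continuous at $x_0$. Recall that $p$ is lower semicontinuous at $x_0$ iff for every sequence $x_k \to x_0$, $\liminf_k p(x_k) \geq p(x_0)$, equivalently iff for every integer $n$ the sublevel set $\{x : p(x) \leq n\}$ is closed.

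First I would suppose, for contradiction, that there exists $x_0 \in X$ and a sequence $x_k \to x_0$ with $\liminf_k p(x_k) < p(x_0)$. Since $p$ is $\Z^+$-valued, after passing to a subsequence I may assume $p(x_k) = m$ is constant and strictly less than $p(x_0)$. Then $T'(x_k) = T^{m}(x_k)$, and continuity of the ordinary iterate $T^m$ gives $T'(x_k) \to T^m(x_0)$. On the other hand, the hypothesis that $T'$ is continuous forces $T'(x_k) \to T'(x_0) = T^{p(x_0)}(x_0)$. Equating the two limits yields
\[
T^{p(x_0)}(x_0) = T^m(x_0), \quad \text{i.e.,} \quad T^{p(x_0) - m}(x_0) = x_0,
\]
with $p(x_0) - m$ a positive integer.

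The final step is to note that this says $x_0$ is a periodic point of $T$, which contradicts minimality: since $X$ is a Cantor space (in particular infinite and perfect), a minimal homeomorphism can have no periodic points, as the finite orbit of any periodic point would fail to be dense. This contradiction completes the argument.

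I do not anticipate any substantive obstacle; the only subtlety is the trivial-looking step of passing to a subsequence on which $p$ is constant, which is justified because the only alternative to $p(x_k)$ being unbounded is the existence of a value in $\{1, 2, \ldots\}$ attained infinitely often, and we have excluded unboundedness by our assumption that $\liminf p(x_k) < \infty$.
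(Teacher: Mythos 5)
Your proof is correct and is essentially the paper's argument: both hinge on extracting a (sub)sequence on which $p$ is a constant $m$, using continuity of $T'$ and of $T^m$ to get $T^{p(x_0)}(x_0)=T^m(x_0)$, and invoking aperiodicity of a minimal homeomorphism of an infinite space; the paper merely packages this as the closedness of each level set $p^{-1}(\{n\})$ rather than as the sequential $\liminf$ criterion. One cosmetic remark: your closing justification for the constant subsequence is slightly off (a finite $\liminf$ does not exclude unboundedness of the sequence), but the step itself is fine since $\liminf_k p(x_k)=m<\infty$ together with integer-valuedness forces $p(x_k)=m$ for infinitely many $k$.
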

\begin{proof}
	It suffices to show that for every $n\in\Z^{+}$ $p^{-1}(\{n\})$ is closed. The lower semicontinuity of $p$ will then follow since for any $\alpha \in \R$, $\{x:p(x)>\alpha\}$ is the complement of union of finitely many sets of the form $p^{-1}(\{n\})$. 
	
	To this end, 
	let $n\in\mathbb{Z}^{+}$, $\{x_{m}\}_{m\ge 1}\subseteq p^{-1}(\{n\})$ and $x\in X$ such that $x_{m}\rightarrow x$. Since both $T'$ and $T^{n}$ are continuous, we have
	$$
	T'(x_{m})\rightarrow T'(x)\text{ and } T^{n}(x_{m})\rightarrow T^{n}(x).
	$$
	Since for every $m$, $T'(x_{m})=T^{n}(x_{m})$ we have that
	$$
	T'(x)=T^{n}(x).
	$$
	We may conclude $p(x)=n$ as a result of $T$ being aperiodic.
	
\end{proof}

It follows from the above that if $p$ is bounded then each of the sets $p^{-1}(\{n\})$ is open as well. Thus we have the following. 

\begin{coro}
	Let $(X,T)$ be a minimal Cantor system. Suppose $(X,T')$ is a minimal Cantor system, where $p:X\rightarrow\mathbb{Z}^{+}$, and $T'(x)=T^{p(x)}(x)$ for all $x \in X$. 
	Then $p$ is continuous if and only if $p$ is bounded. 
\end{coro}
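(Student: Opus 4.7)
The plan is to deduce both directions directly from the previous proposition (which gives lower semicontinuity of $p$) together with the compactness of $X$ and the fact that $\Z^{+} \subset \R$ inherits the discrete topology.

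For the ($\Leftarrow$) direction, suppose $p$ is bounded, say $p(X) \subseteq \{1,2,\ldots, N\}$. The preceding proposition tells us that $p$ is lower semicontinuous, so each preimage $p^{-1}(\{n\})$ is closed in $X$. Then
\[
X = p^{-1}(\{1\}) \sqcup p^{-1}(\{2\}) \sqcup \cdots \sqcup p^{-1}(\{N\})
\]
expresses $X$ as a finite disjoint union of closed sets; each such set is therefore also the complement of a finite union of closed sets, hence open. Thus every $p^{-1}(\{n\})$ is clopen, meaning $p$ is locally constant, and in particular continuous.

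For the ($\Rightarrow$) direction, suppose $p$ is continuous. Since $\Z^{+}$ with its subspace topology from $\R$ is discrete, the image $p(X)$ is a compact subset of a discrete space, hence finite. Therefore $p$ is bounded.

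Since the argument is short and uses only the finite partition structure that lower semicontinuity provides together with compactness, there is no real obstacle — the only care needed is to invoke the preceding proposition in the right direction and to note that continuity into $\Z^{+}$ is equivalent to continuity as a real-valued function with integer values.
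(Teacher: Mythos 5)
Your proof is correct and follows essentially the same route as the paper, which notes immediately before the corollary that lower semicontinuity (from the preceding proposition) makes each $p^{-1}(\{n\})$ closed, so that boundedness forces these finitely many sets to be clopen; the converse via compactness of $X$ and discreteness of $\Z^{+}$ is the standard complement the paper leaves implicit. Nothing further is needed.
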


In the case where $(Y,S)$ is a speedup of $T$ with a bounded jump function, we call $S$ a \emph{bounded speedup} of $T$. Bounded speedups can be viewed as a topological analogue of the speedups studied by Neveu, where the jump function is integrable. Bounded speedups are interesting in their own right and are the subject of the paper \cite{AAO}. 

As in the case for orbit equivalence, strong topological speedups, where the jump function has a single point of discontinuity deserve special attention. 

\begin{defn}
Given a minimal Cantor systems $(X,T)$ and $(Y,S)$, we say that $(Y,S)$ is a \emph{strong speedup} of $(X,T)$ if there is a function $p:X\rightarrow\mathbb{Z}^{+}$ with at most one point of discontinuity in $X$ such that $(Y,S)$ is topologically conjugate to $(X,T')$ where $T'(x)=T^{p(x)}(x)$ for all $x \in X$.
\end{defn}

\section{Proof of Main Theorem \ref{mainstrong}}
In this section we prove Main Theorem \ref{mainstrong} which is our characterization theorem for strong speedups. We will prove the various implications as separate theorems in the subsequent three subsections.  

\subsection{Proof of (1) $\implies$ (3)}

In Section 3 we defined  $\cob(X,T)=\{f-f\circ T:f\in C(X,\Z)\}$. Note that by replacing $f$ with $g=-fT$ in $f-f\circ T$, we obtain $f-f\circ T = g- g \circ T^{-1}$, so 
$\cob(X,T)$ can be written as $\{g-g\circ T^{-1}:g\in C(X,\Z)\}$.

\begin{theorem}
Let $(Y,S)$ and $(X,T)$ be minimal Cantor systems. If $(Y,S)$ is a strong speedup of $(X,T)$ then there is a homeomorphism $h:X \to Y$ such that if $f \in \cob(Y,S)$ then $fh \in \cob(X,T)$.
\end{theorem}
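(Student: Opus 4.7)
The plan is to first reduce the theorem to an explicit coboundary claim on $X$, then construct the required continuous $\Z$-valued witness using Kakutani-Rokhlin towers for $(X,T)$.

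Reduction. Let $h:X\to Y$ be the homeomorphism with $h\circ T'=S\circ h$, where $T'(x)=T^{p(x)}(x)$ realizes the strong speedup and $p$ has at most one point of discontinuity $x_0$. Given $f=g-g\circ S^{-1}\in\cob(Y,S)$ with $g\in C(Y,\Z)$, the conjugacy immediately yields $h\circ(T')^{-1}=S^{-1}\circ h$, so that $fh=\phi-\phi\circ(T')^{-1}$ where $\phi:=g\circ h\in C(X,\Z)$. Since $g$ ranges over all of $C(Y,\Z)$, the theorem reduces to showing that for every $\phi\in C(X,\Z)$, the function $\psi:=\phi-\phi\circ(T')^{-1}$ belongs to $\cob(X,T)$. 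Writing $(T')^{-1}(x)=T^{-q(x)}(x)$ with $q:=p\circ(T')^{-1}$ lower semicontinuous and having at most one discontinuity at $x_0':=T'(x_0)$, and observing that $\psi$ is continuous (the difference of two continuous $\Z$-valued functions) and hence locally constant, I would seek $F\in C(X,\Z)$ satisfying $F-F\circ T^{-1}=\psi$.

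Construction via Kakutani-Rokhlin towers. Using Lemmas \ref{simpletower} and \ref{refinetower}, I would construct a refining sequence $\{\mathcal{P}_n\}$ of KR-tower partitions for $(X,T)$ with mesh tending to $0$, arranged so that $x_0'$ lies in the top level of a distinguished column of each $\mathcal{P}_n$, the function $\psi$ is constant on each level, and each column top maps under $T^{h_n(i)}$ into a single image column $C_n(i'(i))$. On such a $\mathcal{P}_n$, define $F$ by assigning a value $c_i\in\Z$ to the base $C_n(i)$ of each column and propagating upward via $F(T^jb)=c_i+\sum_{k=1}^{j}\psi|_{T^kC_n(i)}$; this enforces $F-F\circ T^{-1}=\psi$ throughout each column interior, and produces a function constant on each level, hence continuous on the union of column interiors. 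The continuity of $F$ across the top-to-base transitions imposes the linear system $c_{i'(i)}=c_i+\Sigma_i$, where $\Sigma_i:=\sum_{k=1}^{h_n(i)}\psi|_{T^kC_n(i)}$.

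Main obstacle. Solvability of this linear system is equivalent to the vanishing of $\sum_j\Sigma_{i_j}$ around every cycle of the column-transition functional graph, each such cycle corresponding to a first return of $T^H$ to $C_n(i_1)$. The key point is that $\psi$ is already a $T'$-coboundary, so its sum telescopes along any initial segment of a $T'$-orbit; the strong-speedup hypothesis---specifically, the single-discontinuity condition on $p$---is expected to allow the first-return $T$-orbit segment in each cycle to be matched with a finite concatenation of $T'$-steps, up to a correction term supported arbitrarily close to $x_0'$ that can be absorbed as the tower mesh shrinks. Once the $c_i$'s are fixed consistently on $\mathcal{P}_n$, compatibility with the refinement $\mathcal{P}_{n+1}$ and passage to the direct limit produces a globally continuous $F:X\to\Z$ satisfying $F-F\circ T^{-1}=\psi$, which yields $fh\in\cob(X,T)$ and proves the theorem.
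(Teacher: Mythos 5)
Your reduction is correct: with $h\circ T'=S\circ h$ one gets $fh=\phi-\phi\circ (T')^{-1}$ for $\phi=g\circ h$, so the theorem amounts to showing $\phi-\phi\circ(T')^{-1}\in\cob(X,T)$ for $\phi\in C(X,\Z)$. But the proof of that claim is exactly where your argument stops being a proof. The entire content is concentrated in your ``Main obstacle'' paragraph, where you assert that the cycle sums in your linear system for the $c_i$'s ``are expected to'' vanish up to ``a correction term \ldots that can be absorbed as the tower mesh shrinks.'' No mechanism is given for why the $T$-first-return segments can be matched with concatenations of $T'$-steps, why the discrepancy is controlled, or how an integer-valued error ``absorbs'' under refinement (there is no smallness for $\Z$-valued data). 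In addition, the setup itself is shaky: in a KR-partition the top level of a column need not map into a single base set, so your column-transition ``functional graph'' requires further refinement to even be defined, and the consistency of the $c_i$'s across the refining sequence $\{\mathcal{P}_n\}$ is asserted rather than arranged. As written, the key step is a conjecture, not an argument.

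The single-discontinuity hypothesis is used in a much more direct way, and once you see it the tower machinery is unnecessary. Since $f=g-g\circ S^{-1}$ is unchanged when $g$ is shifted by a constant, normalize so that $g(h(x_0))=0$, where $x_0$ is the discontinuity point of $p$. Then $\phi=g\circ h=\sum_i n_i\mathbbm{1}_{A_i}$ with finitely many clopen sets $A_i$ none of which contains $x_0$; hence $p$ is continuous on the compact set $\bigcup_i A_i$, so each $A_i(k)=A_i\cap p^{-1}\{k\}$ is clopen and only finitely many are nonempty. Since $(T')^{-1}$ carries $T^kA_i(k)$ back to $A_i(k)$, one computes
\[
\phi-\phi\circ(T')^{-1}=\sum_{i,k}n_i\left(\mathbbm{1}_{A_i(k)}-\mathbbm{1}_{A_i(k)}\circ T^{-k}\right),
\]
and each summand telescopes as $\sum_{j=0}^{k-1}\bigl(u_j-u_j\circ T^{-1}\bigr)$ with $u_j=\mathbbm{1}_{A_i(k)}\circ T^{-j}$, so it lies in $\cob(X,T)$ outright. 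This replaces your unsolved linear system with an explicit finite sum of coboundaries; I recommend you rebuild the proof along these lines.
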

\begin{proof}
	Suppose $(X,T)$ and $(Y,S)$ are minimal Cantor systems and $(Y,S)$ is conjugate to $(X,T')$ where $T'(x) = T^{p(x)}(x)$ for all $x \in X$ and $p:X \to \Z$ has a single point of discontinuity $x_0$. 
	
	Let $h:X \to Y$ denote the conjugacy between $(Y,S)$ and $(X,T')$. Then $h$ is a homeomorphism. It remains to show that if $f \in \cob(Y,S)$ then $fh \in \cob(X,T)$. 
	
	Suppose $f \in \cob(Y,S)$. Then $f=g-gS^{-1}$ for $g \in C(Y,\Z)$. Note that for any constant $k$, if $g'=g+k$ then $f=g'-g'S^{-1}$. Therefore, we may assume without loss of generality that $g(h(x_0))=0$. 
	
	Since $g \in C(Y,\Z)$, we have $g=\sum_{i=1}^I n_i \mathbbm{1}_{B_i}$ for some nonzero integers $n_i$ and clopen sets $B_i \subset Y$. Since $g(h(x_0))=0$, we may assume $h(x_0) \not \in \cup_{i=1}^I B_i$. Set $A_i = h^{-1}B_i$ and $A_i(k) = A_i \cap p^{-1}\{k\}$. Since $p$ is continuous away from $x_0$, we have that each $A_i(k)$ is clopen and there are only finitely many sets $A_i(k)$. Compiling all of this information, we see 
	\begin{align*}
	gh-gS^{-1}h & = \sum_i n_i\mathbbm{1}_{B_i}h -n_i\mathbbm{1}_{SB_i}h \\ 
	& = \sum_i n_i\mathbbm{1}_{A_i} -n_i\mathbbm{1}_{h^{-1}SB_i} \\
	& = \sum_{i,k} k\mathbbm{1}_{A_i(k)} -k\mathbbm{1}_{T^k A_i(k)}  \\
	& = \sum_{i,k} k\mathbbm{1}_{A_i(k)} -k\mathbbm{1}_{A_i(k)} T^{-k}  \in \cob(X,T).
	\end{align*}
\end{proof}

\subsection{Proof of (3) $\implies$ (2)}

We begin with some preliminary lemmas.

\begin{lemma} \label{towercob}
	Let $f \in C(X,\mathbb{Z})$. Then $f \in \cob(X,T)$ if and only if there is an $\varepsilon>0$ such that for any clopen set $A$,
	if $\diam(A) < \varepsilon$ then for all $x \in A$, $$ \sum_{j=0}^{r_A(x)-1} fT^j(x) = 0.$$
\end{lemma}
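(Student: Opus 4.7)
The plan is to handle both directions directly. For the forward direction, I would write $f = g - g\circ T$ for some $g \in C(X,\Z)$. Since $g$ is a continuous $\Z$-valued function on the compact space $X$, it assumes only finitely many values and its level sets form a finite clopen partition of $X$. Any two distinct members of this partition are disjoint closed sets and hence are separated by a positive distance; taking $\varepsilon$ to be the minimum of these separations, any set of diameter less than $\varepsilon$ is contained in a single level set of $g$. If $A$ is clopen with $\diam(A) < \varepsilon$ and $x \in A$, then both $x$ and $T^{r_A(x)}(x)$ lie in $A$, so $g(x) = g(T^{r_A(x)} x)$, and the sum telescopes:
\begin{equation*}
\sum_{j=0}^{r_A(x)-1} f(T^j x) \;=\; \sum_{j=0}^{r_A(x)-1} \bigl(g(T^j x) - g(T^{j+1} x)\bigr) \;=\; g(x) - g(T^{r_A(x)} x) \;=\; 0.
\end{equation*}

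For the reverse direction, suppose the stated $\varepsilon > 0$ exists. My plan is to use a Kakutani--Rokhlin tower to build an explicit coboundary witness. I would choose any nonempty clopen set $C \subseteq X$ with $\diam(C) < \varepsilon$ (possible since $X$ is a Cantor space), and apply Lemma \ref{simpletower} to produce a KR-partition $\mathcal{P} = \{T^j C(i) : 1 \le i \le I,\ 0 \le j < h(i)\}$ with $\Base(\mathcal{P}) = C$. Next, define $g : X \to \Z$ by declaring $g \equiv 0$ on $C$ and, for $y \in C(i)$ and $1 \le j < h(i)$,
\begin{equation*}
g(T^j y) \;=\; -\sum_{k=0}^{j-1} f(T^k y).
\end{equation*}
Because each level $T^j C(i)$ is clopen and $f$ is continuous, $g$ is a well-defined element of $C(X,\Z)$. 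A level-by-level check shows $f = g - g\circ T$ holds on every non-top level by the very definition of $g$; on the top level $T^{h(i)-1}C(i)$ of column $i$, the image under $T$ lies in $C$ where $g \equiv 0$, so the identity reduces to the single condition $\sum_{k=0}^{h(i)-1} f(T^k y) = 0$ for $y \in C(i)$.

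The main obstacle is verifying exactly this top-level consistency, but it follows immediately from the hypothesis. For $y \in C(i) \subseteq C$, the first return time $r_C(y)$ equals the column height $h(i)$ by the structure of the KR-partition, and since $\diam(C) < \varepsilon$, the hypothesis applied to $A = C$ delivers $\sum_{k=0}^{h(i)-1} f(T^k y) = 0$ as needed. Hence $f = g - g\circ T \in \cob(X,T)$, which completes the argument.
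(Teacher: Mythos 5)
Your proposal is correct and follows essentially the same route as the paper: the forward direction telescopes the sum using the fact that a continuous integer-valued transfer function is locally constant at some scale $\varepsilon$, and the converse builds the transfer function from partial sums of $f$ along return-time columns over a small clopen base (your KR-partition via Lemma \ref{simpletower} is just the paper's direct first-return-time construction in different clothing). The only cosmetic difference is a sign convention in the definition of $g$, which is immaterial since $\cob(X,T)$ is a group.
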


\begin{proof}
	Suppose $f \in  \cob(X,T)$. Then $f = gT-g$ where $g$ is uniformly continuous. Therefore there is an $\varepsilon>0$ such that if $d(x,y)<\varepsilon$, $g(x)=g(y)$. Let $A$ be any clopen set with $\diam(A)< \varepsilon$. 
	Let $r_A : A \to \N$ denote the first return time function to $A$. 
	Then for any $x \in A$ we have $gT^{r_A(x)}(x)-g(x) =0$ and thus
	$$
	\sum_{j=0}^{r_A(x)-1} fT^j(x)  =  \sum_{j=0}^{r_A(x)-1} (gT - g )T^j(x)= gT^{r_A(x)}(x)-g(x) =0
	$$
	
	Conversely, suppose there is a clopen set $A$ such that for all $x \in A$,
	$ \sum_{j=0}^{r_A(x)-1} fT^j(x) = 0$. Fix $a \in A$. Set $g(a)=0$, and 
	for $1 \leq j < r_A(a)$, set $$gT^j(a) = \sum_{k=0}^{j-1}fT^k(a)$$
	This defines $g$ on all of $X$ since every point in $X$ is equal to 
	$T^j(a)$ for some $a \in A$ and $0 \leq j < r_A(a)$. 
	
	If $x \in A$ then $$gT(x)-g(x) = f(x).$$ 
	Now suppose $x = T^j(a)$ where $0<j<r_A(x)-1$. Then 
	$$gT(x)-g(x) =  \sum_{k=0}^{j}fT^k(a) -  \sum_{k=0}^{j-1}fT^k(a) = fT^j(a) = f(x).$$
	Finally, assume $x =  T^{r_A(a)-1}(a)$. Then $T(x) \in A$, so 
	$$gT(x)-g(x) = -  \sum_{k=0}^{r_A(a)-2}fT^k(a).$$
	But since $ \sum_{k=0}^{r_A(a)-1}fT^k(a) =0$, we have 
	$$ -  \sum_{k=0}^{r_A(a)-2}fT^k(a) =fT^{r_A(a)-1}(a) = f(x). $$
	Thus $gT-g=f$ on all of $X$. 
\end{proof}

\begin{lemma} \label{towerpos}
Let $(X,T)$ be a minimal Cantor system and suppose $[f] \in G(X,T)$. Then $[f] >0$ if and only if there is an $\varepsilon>0$ such that for any clopen set $A$, if $\diam(A) < \varepsilon$ then for all $x \in A$, $$ \sum_{j=0}^{r_A(x)-1} fT^j(x) > 0.$$
\end{lemma}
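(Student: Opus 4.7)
The plan is to handle the two directions separately, following the template of Lemma \ref{towercob} but replacing equalities by strict inequalities and keeping track of where positivity comes from.

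\emph{Forward direction.} Starting from $[f] > 0$, I pick a representative $f' \in C(X,\Z)$ with $f'(X) \subseteq \Z_+$ and write $f - f' = g - gT$ for some $g \in C(X,\Z)$. Since $g$ is integer-valued on a compact space it takes finitely many values, each on a clopen set, so there is $\varepsilon_1 > 0$ such that $g$ is constant on every subset of diameter less than $\varepsilon_1$. The set $V := \{x : f'(x) > 0\}$ is open and nonempty (if $f' \equiv 0$ then $[f] = 0$, contradicting $[f] > 0$); minimality and compactness then yield $N \in \N$ with $X = \bigcup_{n=0}^{N-1} T^{-n} V$. By Lemma \ref{returntime}, I can choose $\varepsilon_2 > 0$ such that $\diam(A) < \varepsilon_2$ forces $r_A(x) > N$ for every $x \in A$. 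Setting $\varepsilon = \min(\varepsilon_1, \varepsilon_2)$ and telescoping gives
\[
\sum_{j=0}^{r_A(x)-1} fT^j(x) \; = \; \sum_{j=0}^{r_A(x)-1} f'T^j(x) + g(T^{r_A(x)}x) - g(x) \; = \; \sum_{j=0}^{r_A(x)-1} f'T^j(x),
\]
since $x, T^{r_A(x)}(x) \in A$ forces $g$ to agree at the endpoints. The segment of length $r_A(x) > N$ must meet $V$, so at least one $f'$-summand is positive while every summand is non-negative, giving a strictly positive total.

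\emph{Converse.} I will push the cocycle $f$ to the top of a tower. Choose a clopen $C \subseteq X$ with $\diam(C) < \varepsilon$ and apply Lemma \ref{simpletower} to construct a KR-partition $\mathcal{P} = \{T^j C(i) : 1 \le i \le I,\, 0 \le j < h(i)\}$ with $\Base(\mathcal{P}) = C$. By construction, the first return time to $C$ from any $c \in C(i)$ equals $h(i)$, so applying the hypothesis to $A = C$ gives
\[
S_i(c) \; := \; \sum_{k=0}^{h(i)-1} fT^k(c) \; > \; 0 \qquad \text{for every } c \in C(i).
\]
Define $g \in C(X,\Z)$ by $g \equiv 0$ on $C$ and $g(T^j c) = -\sum_{k=0}^{j-1} fT^k(c)$ for $c \in C(i)$, $1 \le j < h(i)$. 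Continuity is immediate because each level $T^j C(i)$ is clopen and $g$ restricted to it is a finite sum of the continuous functions $fT^{k-j}$. A direct column-by-column computation then shows that $f' := f - (g - gT)$ vanishes off the top of every column and equals $S_i$ on the top of column $i$; hence $f'$ is a continuous, non-negative, integer-valued representative of $[f]$. This gives $[f] \in G(X,T)_+$. To rule out $[f] = 0$, I use that any non-negative coboundary of the form $h - hT$ satisfies $h \geq hT \geq hT^2 \geq \ldots$, and by integer-valuedness and boundedness the sequence $hT^n(x)$ stabilizes; by minimality this forces $f' = 0$ on a dense set, hence everywhere by continuity, contradicting $S_i(c) > 0$.

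\emph{Main obstacle.} The bookkeeping in the converse is the subtle point: one must take the full base $C = \bigcup_i C(i)$ (not the individual $C(i)$) to have diameter less than $\varepsilon$, so that the hypothesis produces a positive sum over one \emph{column height} $h(i) = r_C(c)$ rather than over the (typically longer) return time to a single $C(i)$. Once this alignment is made, the cohomological identity $f' = f - (g - gT)$ and the positivity of $f'$ on the top of each column fit together without further difficulty.
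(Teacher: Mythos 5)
Your proof is correct and follows essentially the same route as the paper's: the forward direction splits $f$ into a nonnegative function plus a coboundary and controls each over return words, and the converse replaces $f$ by a cohomologous nonnegative function concentrated on one level per column of a small-base tower (you put it on the top via an explicit transfer function where the paper puts it on the base via Lemma~\ref{towercob}). Your extra step verifying that a nonzero nonnegative function cannot be a coboundary, hence $[f']\neq 0$, fills in a detail the paper leaves implicit.
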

\begin{proof}
Suppose $[f] >0$. Then $f$ is equal to a nonzero, nonnegative function $g$ plus an element $k$ of $\cob(X,T)$.
There is a nonempty clopen set $C$ such that for $x \in C$, $g(x)>0$. Since $T$ is minimal, there is an $r>1$ such that for all $x \in X$, $\{x,T(x), \ldots T^{r-1}(x) \} \cap C \neq \emptyset$. Therefore, if $n > 2r$, we have 
$$\sum_{j=0}^{n-1} gT^j(x) \geq \frac{1}{r+1} n>0.$$ 

By Lemmas \ref{towercob} and \ref{returntime}, there is an $\varepsilon>0$ such that for any clopen set $A$, if $\diam(A) < \varepsilon$ then for all $x \in A$, $r_A(x) > 2r$ and 
 $$ \sum_{j=0}^{r_A(x)-1} kT^j(x) = 0. $$ Therefore, we have
$$ \sum_{j=0}^{r_A(x)-1} fT^j(x) = \sum_{j=0}^{r_A(x)-1} gT^j(x)  > 0.$$

Conversely, suppose there is a clopen set $A$ such that for all $x \in A$, $$ \sum_{j=0}^{r_A(x)-1} fT^j(x) > 0.$$ 
For $x \in A$, set
$$f'(x) = \sum_{j=0}^{r_A(x)-1} fT^j(x)$$ otherwise set $f'(x)=0$. 
Then $f'(x) \geq 0$ for all $x \in X$ and $f'$ is a nonzero function. 
Further, we have 
$$ \sum_{j=0}^{r_A(x)-1} fT^j(x) = \sum_{j=0}^{r_A(x)-1} f'T^j(x)$$
which by Lemma \ref{towercob} implies $[f]=[f']>0$. 
\end{proof}

\begin{lemma} \label{functionineq}
	Let $(X,T)$ be a minimal Cantor system. Suppose $[f], [g] \in G(X,T)$ such that $0<[f]<[g]$. Then there is an $f'\in C(X,\Z)$ such that $0 \leq f'(x) \leq g(x)$ for all $x \in X$ and $[f']=[f]$.
\end{lemma}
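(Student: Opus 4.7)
The plan is to construct $f'$ by using a Kakutani-Rokhlin partition adapted to $f$ and $g$ and then redistributing the values of $f$ level-by-level within each column, so that the new function lies between $0$ and $g$ pointwise while preserving the total sum on each column. Preserving the column sums will give $[f']=[f]$ by Lemma~\ref{towercob}, and the pointwise bound $0\le f'\le g$ will be enforced in the per-level choice. Since the bound $f'\le g$ requires $g\ge 0$, I take $g$ to be a nonnegative representative of its class (as is implicit in the intended applications of this lemma).

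For the setup, Lemma~\ref{towerpos} applied to $f$ and to $g-f$ (both having strictly positive class) yields a single $\varepsilon>0$ such that for every clopen $A$ with $\diam(A)<\varepsilon$ and every $x\in A$,
\[
\sum_{j=0}^{r_A(x)-1}fT^j(x)\ \ge\ 1 \quad\text{and}\quad \sum_{j=0}^{r_A(x)-1}(g-f)T^j(x)\ \ge\ 1,
\]
where integer-valuedness upgrades strict positivity to $\ge 1$. The joint level sets of $f$ and $g$ form a finite clopen partition, and intersecting with any clopen partition of mesh less than $\varepsilon$ gives a refinement $\mathcal Q$ of diameter less than $\varepsilon$ on which $f$ and $g$ are still constant. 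Fix any nonempty clopen $C\in\mathcal Q$ and apply Lemma~\ref{simpletower} with base $C$ and partition $\mathcal Q$ to obtain a KR-partition $\mathcal P=\{T^jC(i):1\le i\le I,\ 0\le j<h(i)\}$ refining $\mathcal Q$. Then $f$ and $g$ are constant on each level $T^jC(i)$, with values $f^i_j$ and $g^i_j\ge 0$, and $0<\sum_j f^i_j<\sum_j g^i_j$ holds for every column.

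The construction then proceeds column-by-column: for each $i$, set $s_i=\sum_j f^i_j$ and greedily choose integers $F^i_0,F^i_1,\dots,F^i_{h(i)-1}$ with $0\le F^i_j\le g^i_j$ and $\sum_j F^i_j=s_i$ (take $t_0=s_i$, put $F^i_j=\min(g^i_j,t_j)$ and $t_{j+1}=t_j-F^i_j$; the strict inequality $s_i<\sum_j g^i_j$ guarantees $t_{h(i)}=0$). Defining $f'|_{T^jC(i)}=F^i_j$ gives $f'\in C(X,\Z)$ satisfying $0\le f'\le g$ pointwise by construction. To close the argument I invoke Lemma~\ref{towercob} applied to $f-f'$ with the clopen set $C$ of diameter less than $\varepsilon$: for $x\in C(i)$ one has $r_C(x)=h(i)$ and
\[
\sum_{j=0}^{h(i)-1}(f-f')T^j(x)=\sum_j f^i_j-\sum_j F^i_j=0,
\]
so $f-f'\in\cob(X,T)$ and $[f']=[f]$. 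The one delicate point is arranging a single KR-partition that simultaneously witnesses positivity of the column sums of $f$, strict domination by the column sums of $g$, and level-constancy of both $f$ and $g$; once this is achieved the combinatorial redistribution and the coboundary verification are elementary.
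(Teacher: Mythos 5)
Your proof is correct and follows essentially the same route as the paper's: apply Lemma~\ref{towerpos} to $[f]$ and $[g-f]$ to get a sufficiently fine KR-partition on whose levels $f$ and $g$ are constant, redistribute the column sums of $f$ subject to the bounds $0\le f'\le g$, and conclude $[f']=[f]$ via Lemma~\ref{towercob}. Your explicit greedy redistribution and your remark that $g$ must be taken as a nonnegative representative merely make precise what the paper leaves implicit.
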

\begin{proof}
	Both $[f]$ and $[g-f]$ are in $G(X,T)_+$. Take $\varepsilon$ to be smaller than $\varepsilon_f$ and  $\varepsilon_{g-f}$ from Lemma \ref{towerpos}. Use Lemma \ref{refinetower} to create a KR-partition 
	$\mathcal{P} = \{T^jA(i) : 1 \leq i \leq I, 0 \leq j < h(i)\}$ over a set $A$ with diameter less than $\varepsilon$ such that $g$ and $f$ are constant on every level of $\mathcal{P}$. 
	Then for any $x \in A(i)$, we have 
	$$\sum_{j=0}^{h(i)-1} gT^j(x) \geq \sum_{j=0}^{h(i)-1} fT^j(x) \geq 0.$$
	Therefore, it is possible to select values for a new function $f'$ on levels of the $i$th column of $\mathcal{P}$ so that $g(x) \geq f'(x) \geq 0$ for all $x$ and so that $$\sum_{j=0}^{h(i)-1} f'T^j(x) = \sum_{j=0}^{h(i)-1} fT^j(x).$$
	By Lemma \ref{towercob}, $[f']=[f]$. 
\end{proof}
By setting $g$ equal to the indicator function of a clopen set in Lemma \ref{functionineq}, we immediately obtain the following. 

\begin{coro} \label{clopensubset}
	Let $(X,T)$ be a minimal Cantor system and $A \subset X$ a clopen set. Suppose $[f] \in G$ such that $0<[f]<[\mathbbm{1}_A]$. Then there is a clopen set $C \subset A$ such that $[\mathbbm{1}_C]=[f]$.
\end{coro}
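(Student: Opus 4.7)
The plan is to apply Lemma \ref{functionineq} directly with $g = \mathbbm{1}_A$, and then observe that the resulting function $f'$ is forced to be the indicator of a clopen subset of $A$.

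More specifically, first I would note that $[\mathbbm{1}_A] \in G(X,T)$ is represented by the continuous integer-valued function $\mathbbm{1}_A$, so the hypothesis $0 < [f] < [\mathbbm{1}_A]$ puts us exactly in the setting of Lemma \ref{functionineq}. Applying that lemma produces $f' \in C(X,\Z)$ with $[f'] = [f]$ and $0 \leq f'(x) \leq \mathbbm{1}_A(x)$ for every $x \in X$.

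Next I would extract the pointwise consequences of the sandwich inequality. For $x \notin A$ we have $\mathbbm{1}_A(x) = 0$, so $f'(x) = 0$. For $x \in A$ we have $0 \leq f'(x) \leq 1$, and since $f'$ takes integer values, $f'(x) \in \{0,1\}$. Thus $f' = \mathbbm{1}_C$ where $C = (f')^{-1}\{1\} \subset A$. Finally, $C$ is clopen because $f'$ is continuous and $\{1\} \subset \Z$ is clopen in the discrete integers, giving $[\mathbbm{1}_C] = [f'] = [f]$, as desired.

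There is no real obstacle here; the corollary is essentially a direct specialization of Lemma \ref{functionineq} to the indicator-function case, with the only additional observation being that an integer-valued continuous function bounded between $0$ and $1$ must be the indicator of a clopen set.
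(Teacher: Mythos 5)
Your proof is correct and is exactly the paper's argument: the corollary is stated as an immediate consequence of Lemma \ref{functionineq} with $g=\mathbbm{1}_A$, the only extra observation being that an integer-valued continuous function sandwiched between $0$ and $\mathbbm{1}_A$ is the indicator of a clopen subset of $A$. Nothing to add.
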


We are now ready to prove (3) $\implies$ (2) in Main Theorem \ref{mainstrong}. 
\begin{theorem}
Let $(X,T)$ and $(Y,S)$ be minimal Cantor systems. Suppose there is a homeomorphism $h:X \to Y$ such that if $f \in \cob(Y,S)$ then $fh \in \cob(X,T)$. Then there is an exhaustive epimorphism $\varphi :\mathcal{G}(Y,S) \twoheadrightarrow \mathcal{G}(X,T)$ preserving the distinguished order unit. 
\end{theorem}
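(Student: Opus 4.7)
The plan is to define $\varphi$ as the map induced by pullback under $h$ and then verify it has all the required properties. Concretely, given $[f] \in G(Y,S)$ with $f \in C(Y,\Z)$, set
$$\varphi([f]) = [f \circ h] \in G(X,T).$$
The hypothesis that $\cob(Y,S)$ pulls back into $\cob(X,T)$ is exactly what is needed for $\varphi$ to be well-defined on coboundary classes: if $[f_1] = [f_2]$, then $f_1 - f_2 \in \cob(Y,S)$, so $(f_1-f_2)h \in \cob(X,T)$ and $[f_1 h] = [f_2 h]$. Linearity of $f \mapsto fh$ makes $\varphi$ a group homomorphism; since $f \geq 0$ pointwise implies $fh \geq 0$ pointwise, we have $\varphi(G(Y,S)_+) \subseteq G(X,T)_+$; and $\varphi([\mchi_Y]) = [\mchi_Y \circ h] = [\mchi_X]$ gives preservation of the order unit.

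Next I would verify surjectivity. Because $h:X\to Y$ is a homeomorphism, the map $f \mapsto f \circ h$ is a \emph{bijection} from $C(Y,\Z)$ onto $C(X,\Z)$. So given any $[g] \in G(X,T)$ with $g \in C(X,\Z)$, the function $f := g \circ h^{-1} \in C(Y,\Z)$ satisfies $\varphi([f]) = [g \circ h^{-1} \circ h] = [g]$. Thus $\varphi$ is an epimorphism. (Note $\varphi$ need not be injective; the kernel consists of $[f]$ where $fh \in \cob(X,T)$, and these need not lift to coboundaries on $Y$.)

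The real content is the exhaustive property. Suppose $[g] \in G(Y,S)_+$ and $[k] \in G(X,T)$ satisfy $0 < [k] < \varphi([g]) = [g \circ h]$. Since $g \in C(Y,\Z)$ can be chosen with $g \geq 0$, the function $g\circ h \in C(X,\Z)$ is also nonnegative, so $[g\circ h] \in G(X,T)_+$. Applying Lemma \ref{functionineq} in $G(X,T)$ to the pair $[k] < [g\circ h]$, I obtain $k' \in C(X,\Z)$ with $0 \leq k'(x) \leq g(h(x))$ for all $x \in X$ and $[k'] = [k]$. Now define $g' := k' \circ h^{-1} \in C(Y,\Z)$. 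Then $g' \circ h = k'$, so $\varphi([g']) = [k' ] = [k]$, and the pointwise inequality $0 \leq k' \leq g\circ h$ transfers via $h^{-1}$ to $0 \leq g'(y) \leq g(y)$ for all $y \in Y$. Hence $[g'], [g - g'] \in G(Y,S)_+$, i.e. $0 \leq [g'] \leq [g]$.

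The strict inequalities $0 < [g'] < [g]$ follow by contrapositive from the hypothesis: if $[g'] = 0$ then $g' \in \cob(Y,S)$, so $g' \circ h \in \cob(X,T)$, contradicting $\varphi([g']) = [k] > 0$; and if $[g - g'] = 0$ then $(g - g')\circ h \in \cob(X,T)$, forcing $\varphi([g]) = \varphi([g']) = [k]$, contradicting $[k] < \varphi([g])$. The main obstacle in the argument is the passage from the abstract interpolation inequality $0 < [k] < \varphi([g])$ to a \emph{pointwise} inequality $0 \leq k' \leq g \circ h$ of representatives; this is precisely the content of Lemma \ref{functionineq}, so once that lemma is in hand the rest is a direct verification.
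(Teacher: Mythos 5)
Your proposal is correct and follows essentially the same route as the paper: define $\varphi[f]=[fh]$, verify it is a well-defined unital positive epimorphism via $f\mapsto fh^{-1}$, and obtain exhaustiveness by applying Lemma \ref{functionineq} to get a pointwise-dominated representative and pushing it back through $h^{-1}$. Your closing verification that $0<[g']<[g]$ holds strictly in $G(Y,S)$ is in fact slightly more careful than the paper's own wording of that last step.
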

\begin{proof}
	Suppose there is a homeomorphism $h:X\to Y$ such that if $f \in \cob(Y,S)$ then $fh \in \cob(X,T)$. Define $\varphi:G(Y,S) \to G(X,T)$ via $\varphi:[f] \mapsto [fh]$. Then $\varphi$ is a homomorphism, $\varphi(G(Y,S)_+)=G(X,T)_+$, and $\varphi[\mathbbm{1}_Y]=[\mathbbm{1}_X]$. Moreover, $\varphi$ is onto since $\varphi[fh^{-1}]=[f]$.
	
	To see the exhaustive property, suppose that $0<[f]<\varphi[g]$ where $[g]\in G(Y,S)_+$ and $[f]\in G(X,T)$. Then $0<[f]<[gh]$ so by Lemma \ref{functionineq}, there is an $f' \in C(X,\Z)$ such that $0 \leq f'(x) \leq gh(x)$ for all $x \in X$ and $[f']=[f]$. Now consider $[f'h^{-1}] \in G(Y,S)$. Since $f'h^{-1}(y) \geq 0$ for all $y \in Y$ we have $[f'h^{-1}]\in G(Y,S)_+$ and $\varphi[f'h^{-1}]=[f']=[f]<[gh]$. Since $[f']=[f] \neq [0]$, $f'$ is a nonzero function, thus $0<[f]<[gh]$ as desired. 
\end{proof}

\subsection{Proof of (2) $\implies$ (1)}

This is the most complicated of the implications, and thus we will need multiple preliminary lemmas. A central theme of these lemmas is to convert relations between unital dimension groups to relations between sets. 

\begin{lemma}\label{clopenpartition}
	Let $(X,T)$ and $(Y,S)$ be minimal Cantor systems. Assume there is an exhaustive epimorphism $\varphi :\mathcal{G}(Y,S) \twoheadrightarrow \mathcal{G}(X,T)$. Let $A \subset X$ and $B \subset Y$ be clopen sets such that $\varphi[\mathbbm{1}_B]=[\mathbbm{1}_A]$. Then for any clopen partition $\{A_1,A_2,\ldots,A_n\}$ of $A$ there is a clopen partition $\{B_1,B_2,\ldots,B_n\}$ of $B$ such that $\varphi[\mathbbm{1}_{B_i}]=[\mathbbm{1}_{A_i}]$ for all $1 \leq i \leq n$.
	
	Moreover, given any $n$ distinct 
	points $y_1,y_2,\ldots ,y_n \in B$ 
	we may select $B_i$ to contain $y_i$ for $i=1,2,\ldots,n$.  
\end{lemma}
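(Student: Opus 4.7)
The plan is to proceed by induction on $n$. The base case $n=1$ is trivial: take $B_1 = B$. For the inductive step, I would peel off a single clopen set $B_n \subset B$ satisfying $\varphi[\mathbbm{1}_{B_n}] = [\mathbbm{1}_{A_n}]$, $y_n \in B_n$, and $y_i \notin B_n$ for each $i < n$, and then apply the inductive hypothesis to $B \setminus B_n$ together with the partition $\{A_1, \ldots, A_{n-1}\}$ and the points $y_1, \ldots, y_{n-1}$. This reduction is well-posed because $\varphi[\mathbbm{1}_{B \setminus B_n}] = [\mathbbm{1}_A] - [\mathbbm{1}_{A_n}] = [\mathbbm{1}_{A_1 \cup \cdots \cup A_{n-1}}]$ and each $y_i$ with $i < n$ lies in $B \setminus B_n$.

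The central new ingredient is a strengthened form of Corollary \ref{clopensubset}: given $[g] \in G(Y,S)$ with $0 < [g] < [\mathbbm{1}_B]$, a distinguished point $y^+ \in B$, and excluded points $y_1^-, \ldots, y_m^- \in B \setminus \{y^+\}$, we want to produce a clopen $C \subset B$ with $y^+ \in C$, each $y_i^- \notin C$, and $[\mathbbm{1}_C] = [g]$. The inductive step will apply this statement with $y^+ = y_n$, $\{y_i^-\} = \{y_i : i < n\}$, and $[g]$ supplied by the exhaustive property of $\varphi$ applied to $[\mathbbm{1}_B] \in G(Y,S)_+$ and $[\mathbbm{1}_{A_n}] \in G(X,T)$, noting $0 < [\mathbbm{1}_{A_n}] < [\mathbbm{1}_A] = \varphi[\mathbbm{1}_B]$.

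To prove the strengthened lemma, I would adapt the tower-construction in Lemma \ref{functionineq}. Fix a representative $g$ of the class $[g]$. Using Lemmas \ref{returntime} and \ref{simpletower}, construct a KR-partition $\mathcal{P} = \{S^j D(i) : 1 \leq i \leq I,\, 0 \leq j < h(i)\}$ of $Y$ with three properties: (a) both $g$ and $\mathbbm{1}_B$ are constant on every level of $\mathcal{P}$; (b) the $m + 1$ special points $y^+, y_1^-, \ldots, y_m^-$ lie in $m+1$ distinct atoms of $\mathcal{P}$ (by having $\mathcal{P}$ refine a fixed clopen partition separating them); and (c) on every column $i$, both the column sum of $g$ and the column sum of $\mathbbm{1}_B - g$ exceed $m$. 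Property (c) is the central technical point: by Lemma \ref{towerpos} the column sums of a positive class are positive on sufficiently fine partitions, and since a nonnegative nonzero integer-valued representative takes value $\geq 1$ on a nonempty clopen set visited by every orbit within a uniformly bounded window (by minimality of $(Y,S)$), its Birkhoff sums grow at least linearly in the column height. Combined with Lemma \ref{returntime}, this lets us force both column sums to exceed any prescribed constant by shrinking the diameter of the base of $\mathcal{P}$.

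Given such a partition, define $f' \in C(Y,\Z)$ column by column: set $f' \equiv 0$ on any level disjoint from $B$; on the $B$-intersecting levels of column $i$, select exactly $k(i) := \sum_{j=0}^{h(i)-1} g(S^j x)$ of them (for any $x \in D(i)$) to receive value $1$, subject to the constraint that the level containing $y^+$, if it lies in column $i$, is among the selected, and any level containing some $y_j^-$, if in column $i$, is excluded. This selection is feasible because at most $m + 1$ of the special points lie in any single column, and by (c) both $k(i)$ and the column sum of $\mathbbm{1}_B - g$ exceed $m$. Lemma \ref{towercob} then gives $[f'] = [g]$, so $C := \{f' = 1\}$ is the desired clopen set. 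The main obstacle in the whole argument is verifying (c): obtaining uniform lower bounds on the column sums of $g$ and $\mathbbm{1}_B - g$ across every column simultaneously, which is precisely where minimality of $(Y,S)$ together with the strict positivity of $[g]$ and $[\mathbbm{1}_B - g]$ in $G(Y,S)$ must be exploited.
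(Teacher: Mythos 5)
Your argument is correct, and it diverges from the paper's proof in how the ``Moreover'' clause is handled. The skeleton of the main claim is the same in both: peel off one set at a time using the exhaustive property of $\varphi$ to pull $[\mathbbm{1}_{A_k}]$ back to a class $0<[g]<[\mathbbm{1}_{B\setminus(\text{previous pieces})}]$, realize that class as a clopen subset, and let the last piece be the complement. The difference is the point constraints. The paper first builds the partition $\{B_i\}$ with no regard for the points $y_1,\dots,y_n$ and then repairs membership afterwards by an orbit exchange: if $y_1\in B_i$, choose $k$ with $S^k(y_1)\in B_1$ and a small clopen $U\ni y_1$ with $S^kU\subset B_1$, and swap $U$ for $S^kU$ between $B_i$ and $B_1$; this preserves the classes because $[\mathbbm{1}_U]=[\mathbbm{1}_{S^kU}]$ in $G(Y,S)$, and one only has to take mild care not to disturb previously placed points. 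You instead push the constraints into a strengthened form of Corollary \ref{clopensubset} that prescribes one included and finitely many excluded points, proved by redoing the Kakutani--Rokhlin selection argument of Lemma \ref{functionineq} with a quantitative version of Lemma \ref{towerpos} (column sums of both $g$ and $\mathbbm{1}_B-g$ exceeding $m$ rather than $0$, which indeed follows from the same minimality-plus-small-base argument used there). Your counting check for the feasibility of the level selection is right: at most $m$ levels per column are forbidden and at most one is forced, while $m<k(i)<b(i)-m$. The paper's route is shorter and reuses Corollary \ref{clopensubset} as a black box; yours is more self-contained in the sense that a single application of the strengthened corollary delivers each $B_k$ with all its constraints at once, at the cost of reopening the tower construction. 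Both are valid proofs of the lemma.
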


\begin{proof}
	We work recursively, set $A_0=\emptyset$ and $B_0=\emptyset$.
	
	Now suppose $1 \leq k \leq n$ and we have defined disjoint clopen sets $B_0,B_1,\ldots B_{k-1}$ such that $B_i\subset B$ and $\varphi[\mathbbm{1}_{B_i}]=[\mathbbm{1}_{A_i}]$ for $0\leq i < k$. There are two cases: Either $k<n$ or $k=n$. 
	
	If $k<n$, then $$0 < [\mathbbm{1}_{A_k}] < [\mathbbm{1}_A]-\sum_{i=0}^{k-1} [\mathbbm{1}_{A_i}]
	=\varphi[\mathbbm{1}_B]-\sum_{i=0}^{k-1} \varphi [\mathbbm{1}_{B_i}].$$
	Since $\varphi$ is exhaustive, there is an $[f]\in G(Y,S)$ such that $$0<[f]<[\mathbbm{1}_B]-\sum_{i=0}^{k-1} [\mathbbm{1}_{B_i}]$$ and $\varphi[f]=[\mathbbm{1}_{A_k}]$. By Corollary \ref{clopensubset}, there is a clopen set $B_k \subset B\setminus \cup_{i=0}^{k-1} B_i$ such that $[f]=[\mathbbm{1}_{B_k}]$ so $\varphi[\mathbbm{1}_{B_k}]=[\mathbbm{1}_{A_k}]$.
	
	For $k=n$, set $B_n=B\setminus \cup_{i=0}^{n-1} B_i$. It follows that 
	$$\varphi[\mathbbm{1}_{B_n}]=\varphi[\mathbbm{1}_B]-\sum_{i=0}^{n-1} \varphi [\mathbbm{1}_{B_i}]
	=[\mathbbm{1}_A]-\sum_{i=0}^{n-1} [\mathbbm{1}_{A_i}]
	=[\mathbbm{1}_{A_n}].$$
	
	For the ``Moreover...'' part, assume that $y_1 \in B_i$. By minimality there is a $k \in \mathbb{Z}$ such that $S^k(y_1) \in B_1$ (in fact there are infinitely many).
	Furthermore, there is a clopen set $U \subset B_i$ containing $y_1$ such that $S^k(U) \subset B_1$. Set $B_1' = (B_1\setminus S^kU) \cup U$, and set 
	$B_i' = (B_i\setminus U) \cup S^k(U)$. 	Then $y_1 \in B_1'$ and it is not difficult to see that $[\mathbbm{1}_{B_1}]=[\mathbbm{1}_{B_1'}]$ and $[\mathbbm{1}_{B_i}]=[\mathbbm{1}_{B_i'}]$. 
	
	To arrange $y_i \in B_i$ for $i>1$, we apply the same idea. We simply need to take care for these points that we choose $U$ and $k \in \mathbb{Z}$ so that $U \cap S^k(U)$ does not contain $y_j$ for $j<i$. 
\end{proof}

\begin{lemma} \label{clopenpartition2}
	Let $(X,T)$ and $(Y,S)$ be minimal Cantor systems. Assume there is an epimorphism $\varphi :\mathcal{G}(Y,S) \twoheadrightarrow \mathcal{G}(X,T)$. Let $A \subset X$ and $B \subset Y$ be clopen sets such that $\varphi[\mathbbm{1}_B]=[\mathbbm{1}_A]$. Then for any clopen partition $\{B_1,B_2,\ldots,B_n\}$ of $B$ there is a clopen partition $\{A_1,A_2,\ldots,A_n\}$ of $A$ such that $\varphi[\mathbbm{1}_{B_i}]=[\mathbbm{1}_{A_i}]$ for all $1 \leq i \leq n$.
	
	Moreover, given any $n$ distinct points $x_1,x_2,\ldots ,x_n \in A$ we may select $A_i$ to contain $x_i$ for $i=1,2,\ldots,n$.  
	
\end{lemma}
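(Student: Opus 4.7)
The plan is to proceed by induction on $n$, reducing the general partition case to the case $n=2$, and then handle the ``Moreover'' clause by the same small-neighborhood swapping trick used in Lemma \ref{clopenpartition}, now carried out in $X$ using minimality of $T$.

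For the base case $n=2$: a clopen partition $\{B_1,B_2\}$ of $B$ gives $\varphi[\mathbbm{1}_{B_1}],\varphi[\mathbbm{1}_{B_2}]\in G(X,T)_+$ with sum $[\mathbbm{1}_A]$. If $B_1$ is nonempty, then by Lemma \ref{towerpos} $[\mathbbm{1}_{B_1}]>0$, and by simplicity of $G(Y,S)$ it is an order unit, so some $N\ge 1$ satisfies $N[\mathbbm{1}_{B_1}]\ge[\mathbbm{1}_Y]$; applying $\varphi$ gives $N\varphi[\mathbbm{1}_{B_1}]\ge[\mathbbm{1}_X]>0$, forcing $\varphi[\mathbbm{1}_{B_1}]>0$ in $G(X,T)$. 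When both $B_1,B_2$ are nonempty we therefore have $0<\varphi[\mathbbm{1}_{B_1}]<[\mathbbm{1}_A]$, and Corollary \ref{clopensubset} produces a clopen $A_1\subset A$ with $[\mathbbm{1}_{A_1}]=\varphi[\mathbbm{1}_{B_1}]$; we set $A_2=A\setminus A_1$ and compute $[\mathbbm{1}_{A_2}]=[\mathbbm{1}_A]-[\mathbbm{1}_{A_1}]=\varphi[\mathbbm{1}_{B_2}]$. Degenerate cases where some $B_i=\emptyset$ are handled trivially. The induction step from $n-1$ to $n$ follows by applying the $n=2$ case to the coarsened partition $\{B_1\cup\cdots\cup B_{n-1},\,B_n\}$ to split off $A_n$, then invoking the inductive hypothesis on $(A\setminus A_n,\,B_1\cup\cdots\cup B_{n-1})$ with the partition $\{B_1,\dots,B_{n-1}\}$.

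For the ``Moreover'' clause we process the points $x_1,\dots,x_n$ in turn, modifying the current partition at each stage. If $x_i$ currently lies in some $A_j$ with $j\ne i$, choose $k\in\Z$ with $T^k(x_i)\in A_i$ (possible by minimality of $T$), pick a small clopen neighborhood $U\subset A_j$ of $x_i$ with $T^k(U)\subset A_i$ and $U\cap T^k(U)=\emptyset$, and perform the swap $A_i\mapsto (A_i\setminus T^k U)\cup U$ and $A_j\mapsto (A_j\setminus U)\cup T^k U$. Since $\mathbbm{1}_U-\mathbbm{1}_U\circ T^{-1}\in\cob(X,T)$, iterating gives $[\mathbbm{1}_U]=[\mathbbm{1}_{T^k U}]$ in $G(X,T)$, so the classes $[\mathbbm{1}_{A_i}]$ and $[\mathbbm{1}_{A_j}]$ are unchanged and the required equalities with $\varphi[\mathbbm{1}_{B_i}]$ and $\varphi[\mathbbm{1}_{B_j}]$ are preserved. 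The main obstacle is that the swap for $x_i$ must not disturb the placements of $x_1,\dots,x_{i-1}$; this is arranged by shrinking $U$ and reselecting $k$ so that neither $U$ nor $T^k(U)$ contains any of the earlier points, which is always possible since minimality of $T$ supplies infinitely many admissible values of $k$ while only finitely many prior points must be avoided.
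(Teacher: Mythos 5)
Your proof is correct and follows essentially the same route as the paper's (which simply points back to the proof of Lemma \ref{clopenpartition}): realize each $\varphi[\mathbbm{1}_{B_i}]$ as a clopen subset of $A$ via Corollary \ref{clopensubset}, absorb the remainder into the last piece, and handle the ``Moreover'' clause by the orbit-swapping trick, now performed in $X$ with $T$. The only addition is that you justify $0<\varphi[\mathbbm{1}_{B_i}]<[\mathbbm{1}_A]$ via simplicity and order units, a step the paper asserts without comment; this is a worthwhile (and correct) bit of detail.
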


The proof of Lemma \ref{clopenpartition2} is nearly identical to Lemma \ref{clopenpartition} except we do not need the exhaustive property, only that $\varphi(G(Y,S)_+)=G(X,T)_+$. 
To say a bit more, note that if $0 < [\mathbbm{1}_{B_i}] < [\mathbbm{1}_B]$ then it follows that $0<\varphi [\mathbbm{1}_{B_i}] < [\mathbbm{1}_A]$ and thus there is a clopen set $A_i \subset A$ with $[\mathbbm{1}_{A_i}]=\varphi [\mathbbm{1}_{B_i}]$ by Corollary \ref{clopensubset}.

Given an exhaustive epimorphism from one unital dimension group to another
$\varphi:\mathcal{G}(Y,S) \to \mathcal{G}(X,T)$, Lemma \ref{clopenpartition} above will allow us to take a KR-partition for $(Y,S)$ and mirror that partition in $X$ in a way that respects $\varphi$. While the partition in $X$ need not be a KR-partition, it will be KR-like in the following sense. 

\begin{defn}
	Let $(X,T)$ be a minimal Cantor system. We say a clopen partition 
	$\{A(i,j): 1 \leq i \leq I, 0\leq j < h(i) \}$ is \emph{KR-like} provided for each $i$ and $0\leq j,j'<h(i)$, $[\mathbbm{1}_{A(i,j)}]=[\mathbbm{1}_{A(i,j')}]$.
\end{defn}

The following indicates how we will use the above lemmas to mirror a KR-partition for one minimal Cantor system $(Y,S)$ in a KR-like partition for another minimal Cantor system $(X,T)$.

\begin{lemma}\label{copytower}
	Suppose there is an exhaustive epimorphism $\varphi: \mathcal{G}(Y,S) \twoheadrightarrow \mathcal{G}(X,T)$. Fix $x_0 \in X$. 
	Let $\mathcal{Q}=\{S^jB(i) : 1 \leq i \leq I, 0 \leq j < h(i)\}$ be a clopen KR-partition of $Y$ with $y_0 \in S^{h(1)-1}B_1$ and
	$S(y_0) \in B_2$. 
	Then there is a KR-like partition $\mathcal{P}  = \{A(i,j): 1 \leq i \leq I, 0 \leq j < h(i)\}$ of $X$ such that:
	\begin{enumerate}
		\item $x_0 \in A(1,h(1)-1)$ and $T(x_0) \in A(2,0)$.
		\item For every $(i,j)$, $\varphi([\mathbbm{1}_{S^jB_i}])=[\mathbbm{1}_{A(i,j)}]$.
	\end{enumerate}  
\end{lemma}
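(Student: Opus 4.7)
The plan is to derive this lemma as a direct application of Lemma \ref{clopenpartition2}, combined with the observation that $S$-translates of a clopen set represent the same class in $G(Y,S)$. I would set $B = Y$ and $A = X$ in Lemma \ref{clopenpartition2}; the hypothesis $\varphi[\mathbbm{1}_Y] = [\mathbbm{1}_X]$ is just the fact that $\varphi$ preserves the distinguished order unit. The partition of $B$ to be used is $\mathcal{Q}$ itself, enumerated $Q_1, Q_2, \ldots, Q_N$ (where $N = \sum_i h(i)$) with $Q_1 = S^{h(1)-1}B(1)$ listed first and $Q_2 = B(2)$ listed second, and with each remaining $Q_k$ matched to some pair $(i,j)$ in any convenient way.

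Next, I would use the ``Moreover'' clause of Lemma \ref{clopenpartition2} with the two prescribed points $x_1 := x_0$ and $x_2 := T(x_0)$. These are distinct because $T$, being a minimal homeomorphism of a Cantor space, is aperiodic. The lemma then produces a clopen partition $\{A_1, \ldots, A_N\}$ of $X$ such that $\varphi[\mathbbm{1}_{Q_k}] = [\mathbbm{1}_{A_k}]$ for each $k$, with $x_0 \in A_1$ and $T(x_0) \in A_2$. Re-indexing by setting $A(i,j) := A_k$ whenever $Q_k = S^j B(i)$, we immediately obtain properties (1) and (2) as stated.

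It remains to verify the KR-like property. For any clopen $C \subseteq Y$, one has $\mathbbm{1}_{SC} - \mathbbm{1}_C = \mathbbm{1}_{SC} - \mathbbm{1}_{SC}\circ S^{-1} \in \cob(Y,S)$, so $[\mathbbm{1}_{SC}] = [\mathbbm{1}_C]$ in $G(Y,S)$; iterating gives $[\mathbbm{1}_{S^j B(i)}] = [\mathbbm{1}_{B(i)}]$ for every $0 \leq j < h(i)$. Applying $\varphi$ yields $[\mathbbm{1}_{A(i,j)}] = \varphi[\mathbbm{1}_{S^j B(i)}] = \varphi[\mathbbm{1}_{B(i)}]$, which is independent of $j$. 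Hence all levels of column $i$ have a common class, as required.

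There is no deep obstacle here; the real work was absorbed into Lemma \ref{clopenpartition2} (and the tower-based analysis behind it). The only subtle ingredients are the aperiodicity of $T$, which is needed to guarantee $x_0 \neq T(x_0)$ before invoking the ``Moreover'' clause with two distinct prescribed points, and the elementary coboundary identity that makes all $S$-translates of a base set cohomologous. Note that exhaustiveness is not explicitly used in this lemma (only that $\varphi$ is a unital epimorphism with $\varphi(G(Y,S)_+) = G(X,T)_+$, which is automatic from exhaustiveness); exhaustiveness will enter crucially in the subsequent steps of the (2)$\Rightarrow$(1) implication where one must go the \emph{other} direction and lift partitions from $X$ to $Y$.
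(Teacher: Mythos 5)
Your proof is correct and follows essentially the same route as the paper: apply the partition-transfer lemma with $B=Y$, $A=X$, the partition $\mathcal{Q}$, and the prescribed points $x_0$ and $T(x_0)$ (the paper's proof cites Lemma~\ref{clopenpartition}, but since one transfers a partition of $Y$ to one of $X$ the relevant statement is indeed Lemma~\ref{clopenpartition2}, as you use), and then deduce the KR-like property from the fact that $S$-translates of a clopen set are cohomologous. One tiny algebraic slip: $\mathbbm{1}_{SC}\circ S^{-1}=\mathbbm{1}_{S^{2}C}$, so the coboundary witness should be $\mathbbm{1}_{SC}-\mathbbm{1}_{C}=\mathbbm{1}_{C}\circ S^{-1}-\mathbbm{1}_{C}$; the conclusion $[\mathbbm{1}_{SC}]=[\mathbbm{1}_{C}]$ is of course still correct.
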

\begin{proof}
	Since $\varphi$ is unital, we have $\varphi[\mathbbm{1}_{Y}]=[\mathbbm{1}_{X}]$. Hence, by Lemma~\ref{clopenpartition} there exists clopen partition $\mathcal{P}=\{A(i,j): 1 \leq i \leq I, 0 \leq j < h(i)\}$ such that for each $(i,j)$
	$$
	\varphi([\mathbbm{1}_{S^jB(i)}])=[\mathbbm{1}_{A(i,j)}].
	$$ 
	Notice that the KR-like property of $\mathcal{P}$ follows from the above relation.	 
	Additionally, Lemma~\ref{clopenpartition} ensures that $x_0\in A(1,h(1)-1)$ and $T(x_0)\in A(2,0)$.
\end{proof}

Once a KR-like partition in $X$ is created, we may use the following lemma to begin to define a speedup $T'$ of $T$. 

\begin{lemma}[Strong Speedup Lemma]\label{speedup}
	Let $(X,T)$ be a minimal Cantor system and let $A,C$ be nonempty disjoint clopen subsets of $X$. If $\mathbbm{1}_{A}-\mathbbm{1}_{C}\in\cob(X,T)$, then there exists a continuous $p:A\rightarrow \Z^{+}$ such that $T'$, defined by $T'(x)= T^{p(x)}(x)$, is a homeomorphism from $A$ to $C$.
\end{lemma}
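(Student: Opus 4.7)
The plan is to use a cocycle potential for $\mathbbm{1}_A-\mathbbm{1}_C$ together with a Kakutani-Rohlin partition whose base lies in the zero set of that potential, so that within each column the values of the cocycle trace a non-negative Dyck-like excursion. Specifically, because $\mathbbm{1}_A-\mathbbm{1}_C\in\cob(X,T)$, write $\mathbbm{1}_A-\mathbbm{1}_C = f - f\circ T$ for some $f\in C(X,\Z)$ and set $g := -f$, so $g\in C(X,\Z)$ satisfies $g\circ T - g = \mathbbm{1}_A-\mathbbm{1}_C$; subtracting $\min_X g$ we may further assume $g\geq 0$ on $X$ and that the clopen set $g^{-1}(0)$ is nonempty. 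By Lemma~\ref{towercob} pick $\varepsilon>0$ so that for every clopen $B$ with $\diam(B)<\varepsilon$ and every $x\in B$, $\sum_{j=0}^{r_B(x)-1}(\mathbbm{1}_A-\mathbbm{1}_C)(T^jx)=0$. Choose a clopen $D\subseteq g^{-1}(0)$ of diameter less than $\varepsilon$ and use Lemma~\ref{simpletower} to produce a KR-partition $\mathcal{P}=\{T^jD(i):1\leq i\leq I,\,0\leq j<h(i)\}$ with base $D$ refining the clopen partition $\{A,C,X\setminus(A\cup C)\}\vee\{g^{-1}(v):v\in g(X)\}$. Every level of $\mathcal{P}$ is then clopen, contained in exactly one of $A$, $C$, or $X\setminus(A\cup C)$, and constant in $g$.

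Fix a column $i$ and let $g_j$ denote the constant value of $g$ on $T^jD(i)$ for $0\leq j<h(i)$; set $g_{h(i)}$ to be the constant value of $g$ on $T^{h(i)}D(i)\subseteq D$, which is $0$. Then $g_0 = g_{h(i)} = 0$, each $g_j\geq 0$, and the increment $g_{j+1}-g_j$ equals $+1$, $-1$, or $0$ according as level $j$ is in $A$, $C$, or neither. The walk $(g_0,\ldots,g_{h(i)})$ is thus a non-negative excursion from $0$ to $0$, and the classical Dyck matching applies: scanning $j=0,1,\ldots,h(i)-1$ in order, push the position $j$ onto a stack at every $+1$ step, and at every $-1$ step pop the stack and pair the popped position $a$ with the current position $c$. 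A standard check (the stack size at the start of step $j$ equals $g_j\geq 0$, and $g_{h(i)}=0$, so the stack is never popped when empty and is empty at the end) shows this yields a bijection between the $A$-levels and $C$-levels of column $i$ in which every pair $(a,c)$ satisfies $c>a$.

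Finally, define $p:A\to\Z^+$ by $p\equiv c-a$ on $T^aD(i)$ whenever $(a,c)$ is the Dyck pair in column $i$. The $A$-levels are finitely many pairwise disjoint clopen sets whose union is $A$, so $p$ is locally constant and positive, hence continuous. On each $A$-level, $T^{p(\cdot)}$ is the homeomorphism $T^{c-a}:T^aD(i)\to T^cD(i)\subseteq C$, and since the Dyck matching bijects the $A$-levels onto the $C$-levels of each column (whose union is all of $C$), $T'(x) = T^{p(x)}(x)$ is a continuous bijection $A\to C$, hence a homeomorphism by compactness. The main obstacle is ensuring the $A$/$C$ matching in each column is realizable by \emph{positive} powers of $T$; this is precisely why we place $D$ inside $g^{-1}(\min g)$, which forces each column walk to stay non-negative and the Dyck matching to pair every up-step with a strictly later down-step in the same column.
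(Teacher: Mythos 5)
Your proof is correct. It shares the paper's skeleton --- build a KR-partition on whose levels $\mathbbm{1}_A$ and $\mathbbm{1}_C$ are constant and whose column sums of $\mathbbm{1}_A-\mathbbm{1}_C$ vanish, then match $A$-levels to $C$-levels column by column --- but it diverges at the matching step in a genuine way. The paper takes an \emph{arbitrary} bijection $\Gamma$ between the $A$-level indices and $C$-level indices of a column and must therefore handle the case $\Gamma(j)<j$, where the target level sits below the source level; there $p$ is not a constant on the level but $p(x)=r(x)-(j-\Gamma(j))$ with $r$ the first return time to $T^jD(i)$, and the homeomorphism property is inherited from the first return map. You instead normalize the transfer function $g$ (with $g\circ T-g=\mathbbm{1}_A-\mathbbm{1}_C$) to be nonnegative and base the tower inside $g^{-1}(0)=g^{-1}(\min g)$, which forces each column's $g$-profile to be a nonnegative excursion; the Dyck pairing then always matches an $A$-level $a$ to a strictly later $C$-level $c>a$ in the same column, so $p\equiv c-a$ is a positive constant on each $A$-level and $T'$ is just $T^{c-a}$ there. (Your invocation of Lemma~\ref{towercob} and the diameter bound on $D$ are actually superfluous: the column sums vanish automatically by telescoping $g$ between the base and the return to the base.) What your route buys is a bounded, explicitly locally constant jump function and no need for the return-time case; what the paper's buys is brevity, since it never has to argue that a forward matching exists. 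Both yield the stated conclusion.
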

\begin{proof}
	Let $A$ and $C$ be as above. Since $\mathbbm{1}_{A}-\mathbbm{1}_{C}\in\cob(X,T)$, there exists a KR-partition $\mathcal{P}=\{T^{j}D(i):1\le i\le n,\, 0\le j<h(i)\}$, such that
	$\mathbbm{1}_{A}$ and $\mathbbm{1}_{C}$ are constant on all levels of $\mathcal{P}$ and $\mathbbm{1}_{A}-\mathbbm{1}_{C}$ sums to zero on all columns of $\mathcal{P}$. 
	
	Fix $i$ and consider the $i^{th}$ column of $\mathcal{P}$. By the above construction, there exists a bijection
	$$\Gamma: \{j : 0 \leq j < h(i), T^jD(i) \subset A\}
	\to \{j : 0 \leq j < h(i), T^jD(i) \subset C\}.$$
	For $x \in T^jD(i) \subset A$, set 
	$$p(x) = \min\{n > 0 : T^n(x) \in T^{\Gamma(j)}D(i) \}.$$
	That $p(x)$ is well-defined follows from the fact that the forward orbit of $x$ is dense. That $p$ is continuous on $T^jD(i)$ follows from the fact that both $T^jD(i)$ and $T^{\Gamma(j)}D(i)$ are clopen. 
	
	If $\Gamma(j)>j$ then $p$ is simply the constant $\Gamma(j)-j$ on $T^jD(i)$, and
	$T^{\Gamma(j)-j}$ yields a homeomorphism from $T^jD(i)$ to  $T^{\Gamma(j)}D(i)$.
	
	Now suppose $\Gamma(j)<j$. Then $p(x) = r(x) - (j-\Gamma(j))$ where $r(x)$ is the first return time of $x$ to $T^jD(i)$. It follows from the fact that the first return map $x \mapsto T^{r(x)}(x)$ is a homeomorphism from $T^jD(i)$ to itself that $x \mapsto T^{p(x)}(x)$ is  a homeomorphism from $T^jD(i)$ to $T^{\Gamma(j)}D(i)$. 
\end{proof}

With these basic lemmas established, we need to show how to consistently define $T'$ on a refining sequence of KR-like partitions of $X$, thus defining $T'$ on more and more of the space.  

\begin{lemma}[Refinement of mirrored partitions]\label{refinemirror}
	Suppose $(X,T)$ is a minimal Cantor system with KR-like clopen partition $\mathcal{P} = \{A(i,j): 1 \leq i \leq I, 0 \leq j < h(i) \}$ and $\varepsilon>0$, then there exists another KR-like clopen partition $\mathcal{P}' =\{A'(k,j): 1 \leq k \leq K, 0 \leq j < h(i)\}$ with the following properties:
	\begin{enumerate}
		\item For all $k,j$, $\diam(A'(k,j))<\varepsilon$.
		\item $\mathcal{P}'$ refines $\mathcal{P}$.
	\end{enumerate}
\end{lemma}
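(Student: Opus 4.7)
The plan is to refine each column of $\mathcal{P}$ independently, splitting it into sub-columns of the same height $h(i)$ whose pieces have diameter less than $\varepsilon$ and whose $G(X,T)$-classes match across the levels of a column. The main tool is Lemma~\ref{clopenpartition2} applied with $\varphi$ equal to the identity on $\mathcal{G}(X,T)$: this lets us transfer a partition from one level of a column to any other level while matching classes.

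First I will fix a clopen partition $\mathcal{R}$ of $X$ whose pieces all have diameter less than $\varepsilon$; this is possible because $X$ is a Cantor space. Now fix a column index $i$. I will maintain a common label set $\Lambda$ together with working partitions $\{A^{\lambda}(i,j) : \lambda \in \Lambda\}$ of each level $A(i,j)$, preserving the invariant that $[\mathbbm{1}_{A^{\lambda}(i,j)}]$ in $G(X,T)$ depends only on $\lambda$, not on $j$. Start with $\Lambda = \{*\}$ and $A^{*}(i,j) = A(i,j)$; the invariant holds initially because $\mathcal{P}$ is KR-like.

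I then sweep $j = 0,1,\ldots,h(i)-1$ in order. At step $j$, for each current label $\lambda$, split $A^{\lambda}(i,j)$ by intersecting with $\mathcal{R}$, introducing new labels $(\lambda,R)$ and setting $A^{(\lambda,R)}(i,j) = A^{\lambda}(i,j) \cap R$ for each nonempty intersection; each such piece has diameter less than $\varepsilon$. For every other level $j' \neq j$, apply Lemma~\ref{clopenpartition2} with $\varphi = \operatorname{id}_{\mathcal{G}(X,T)}$ to transfer this finite clopen partition of $A^{\lambda}(i,j)$ into a clopen partition $\{A^{(\lambda,R)}(i,j')\}_R$ of $A^{\lambda}(i,j')$ with $[\mathbbm{1}_{A^{(\lambda,R)}(i,j')}] = [\mathbbm{1}_{A^{\lambda}(i,j)\cap R}]$. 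The lemma applies because the invariant guarantees $[\mathbbm{1}_{A^{\lambda}(i,j)}] = [\mathbbm{1}_{A^{\lambda}(i,j')}]$, and the invariant is preserved under the new labels $(\lambda,R)$.

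At the end of the sweep, every piece of the resulting partition of each $A(i,j)$ has diameter less than $\varepsilon$: the pieces of $A(i,j)$ were made small at the moment step $j$ was processed, and any later step merely subdivides existing pieces further, preserving small diameter. Combining across all columns $i$ (with a final re-indexing $k = (i,\ell)$ as $\ell$ ranges over the sub-columns produced in column $i$) yields the desired $\mathcal{P}'$, refining $\mathcal{P}$, KR-like, with small-diameter pieces, and with each new column of height $h(i)$ where $i$ is the original column it refines. The only genuine delicacy I anticipate is the bookkeeping --- keeping the matching invariant synchronized across all levels of a column as pieces are split --- since the algebraic content of transferring partitions between levels is already packaged cleanly in Lemma~\ref{clopenpartition2}.
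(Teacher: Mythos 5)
Your proposal is correct and follows essentially the same route as the paper: shrink one level of a column by intersecting with a fine clopen partition, transfer that partition to every other level of the column via Lemma~\ref{clopenpartition2} applied to the identity epimorphism on $\mathcal{G}(X,T)$, and iterate over all levels, noting that later steps only further subdivide already-small pieces. Your explicit invariant on the label set is just a careful spelling-out of the paper's ``continue in the same manner for every pair $(i,j)$.''
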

\begin{proof}
Fix $i \in \{1,\ldots ,I\}$ and $0 \leq j <h(i)$. 
Partition $A(i,j)$ into finitely many sets, $A(i,1,j), \ldots , A(i,L,j)$ each of diameter less than $\varepsilon$. Now for $j' \neq j$ and $0\leq j' < h(i)$, we have $\varphi [\mathbbm{1}_{A(i,j)}]=[\mathbbm{1}_{A(i,j')}]$. So, applying  Lemma~\ref{clopenpartition2} to the identity map there exist clopen sets 
$A(i,1,j'), \ldots , A(i,L,j')$ such that
$$
\varphi [\mathbbm{1}_{A'(i, \ell ,j')}]=[\mathbbm{1}_{A'(i, \ell ,j)}]
$$
for $1 \leq \ell \leq L$. 

Continue in the same manner for every pair $(i,j)$. After finitely many iterations and re-indexing, the result is a KR-like clopen partition $\mathcal{P}'=\{A'(k,j): 1 \leq k \leq K, 0 \leq j < h(i)\}$ which refines $\mathcal{P}$ and satisfies $\diam (A'(k,j)) < \varepsilon$ for all $k,j$. 
\end{proof}

\begin{lemma}\label{refineKR}
	Suppose there is an exhaustive epimorphism\linebreak $\varphi:\mathcal{G}(Y,S)\twoheadrightarrow \mathcal{G}(X,T)$. If $\mathcal{Q}$ is a KR-partition of $Y$ and $\mathcal{P}$ is a ``mirrored'' clopen partition of $X$ ($\mathcal{P}$ satisfies Lemma~\ref{copytower}), then any tower-preserving refinement of $\mathcal{Q}$, and can be mirrored onto $\mathcal{P}$ while still preserve Lemma~\ref{copytower}.
\end{lemma}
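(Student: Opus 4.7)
The plan is to exploit Lemma~\ref{clopenpartition2} level-by-level. A tower-preserving refinement $\mathcal{Q}'$ of $\mathcal{Q}$ is obtained by partitioning each base $B(i)$ into finitely many clopen subsets $B(i,1),\ldots,B(i,k_i)$, so the columns of $\mathcal{Q}'$ are indexed by pairs $(i,\ell)$ with the same heights $h(i)$ and levels $S^jB(i,\ell)$ for $0 \leq j<h(i)$. I want to produce a KR-like refinement $\mathcal{P}'$ of $\mathcal{P}$ whose cells, when labelled $A(i,j,\ell)$, satisfy $\varphi[\mathbbm{1}_{S^jB(i,\ell)}]=[\mathbbm{1}_{A(i,j,\ell)}]$ and still respect the ``basepoint'' condition (1) of Lemma~\ref{copytower}.

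The key observation is that for each fixed pair $(i,j)$ we already have $\varphi[\mathbbm{1}_{S^jB(i)}]=[\mathbbm{1}_{A(i,j)}]$ by hypothesis, and $\{S^jB(i,1),\ldots,S^jB(i,k_i)\}$ is a clopen partition of $S^jB(i)$. Lemma~\ref{clopenpartition2} then yields a clopen partition $\{A(i,j,1),\ldots,A(i,j,k_i)\}$ of $A(i,j)$ with $\varphi[\mathbbm{1}_{S^jB(i,\ell)}]=[\mathbbm{1}_{A(i,j,\ell)}]$ for each $\ell$. Carrying this out independently for every $(i,j)$ produces the desired refined partition $\mathcal{P}'=\{A(i,j,\ell)\}$, which by construction refines $\mathcal{P}$ and satisfies property (2) of Lemma~\ref{copytower} with respect to $\mathcal{Q}'$.

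To verify the KR-like property of $\mathcal{P}'$, note that $\mathbbm{1}_{S^jB(i,\ell)}-\mathbbm{1}_{B(i,\ell)}$ is a telescoping sum of elementary coboundaries $\mathbbm{1}_{S^mB(i,\ell)}-\mathbbm{1}_{S^{m-1}B(i,\ell)}=\mathbbm{1}_{S^mB(i,\ell)}-\mathbbm{1}_{S^mB(i,\ell)}\circ S$, so $[\mathbbm{1}_{S^jB(i,\ell)}]=[\mathbbm{1}_{B(i,\ell)}]$ in $G(Y,S)$. Applying $\varphi$ gives $[\mathbbm{1}_{A(i,j,\ell)}]=[\mathbbm{1}_{A(i,j',\ell)}]$ for all $0\leq j,j'<h(i)$, which is the KR-like condition for the new columns indexed by $(i,\ell)$.

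Finally, to preserve property (1) of Lemma~\ref{copytower}, let $\ell_1,\ell_2$ be the unique indices with $y_0\in S^{h(1)-1}B(1,\ell_1)$ and $S(y_0)\in B(2,\ell_2)$ in $\mathcal{Q}'$. When applying Lemma~\ref{clopenpartition2} at $(i,j)=(1,h(1)-1)$ and at $(2,0)$, the ``Moreover'' clause lets me insist that $x_0\in A(1,h(1)-1,\ell_1)$ and $T(x_0)\in A(2,0,\ell_2)$; these two choices are independent since $x_0$ and $T(x_0)$ lie in disjoint cells of $\mathcal{P}$. There is no genuine obstacle here: the whole argument is a careful bookkeeping exercise invoking Lemma~\ref{clopenpartition2} once per level, with the only mild subtlety being the simultaneous specification of the two distinguished points, which is handled automatically by the disjointness of the cells containing them.
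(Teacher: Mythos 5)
Your proof is correct and follows essentially the same route as the paper's: both mirror a tower-preserving refinement of $\mathcal{Q}$ onto $\mathcal{P}$ by applying Lemma~\ref{clopenpartition2} level by level and handling the distinguished points via the ``Moreover'' clause (the paper merely processes the refining partition one cell at a time via binary splits, whereas you partition each base $B(i)$ in one pass). Your explicit telescoping-coboundary verification that $[\mathbbm{1}_{S^jB(i,\ell)}]=[\mathbbm{1}_{B(i,\ell)}]$, hence that $\mathcal{P}'$ is KR-like, is a detail the paper leaves implicit.
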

\begin{proof}
	Let $\mathcal{Q}=\{S^jB_i:1\le i\le I,0\le j<h(i)\}$ be a KR-partition of $Y$ and $\mathcal{P}=\{A(i,j)\}$ be as in Lemma~\ref{copytower}. Suppose $\mathscr{P}=\{P_n\}_{n=1}^{m}$ is a finite, clopen partition of $Y$. Consider $P_1$ and assume for some fixed $i,j$ that
	$$
	P_1\cap S^jB(i)\neq \emptyset \text{ and } S^j_2B(i)\nsubseteq P_1.
	$$
	In this case we split $S^j_2B(i)$ into two clopen sets:
	$$
	P_1\cap S^jB(i) \text{ and } P_1^c\cap S^jB(i).
	$$
	We now split the entire tower $\{S^{\ell}B(i)\}$ into the following two towers:
	$$
	\{S^{k-j}(P_1\cap T^j_2B(i))\}_{k=0}^{h(i)-1}\text{ and } \{S^{k-j}(P_1^c\cap S^jB(i))\}_{k=0}^{h(i)-1}.
	$$
	Observe, for each $k$ we have,
	$$S^kB(i)=S^{k-j}(P_1\cap S^j_2B(i))\cup S^{k-j}(P_1^c\cap S^jB(i)),$$
	thus by Lemma~\ref{clopenpartition2} there exist disjoint clopen subsets $A_1(i,k),A_c(i,k)$ such that 
	$$
	\varphi[\mathbbm{1}_{S^{k-j}(P_1\cap S^jB(i))}]=[\mathbbm{1}_{A_1(i,k)}]\text{ and } \varphi[\mathbbm{1}_{S^{k-j}(P_1^c\cap S^jB(i))}]=[\mathbbm{1}_{A_c(i,k)}].
	$$ 
	Continuing in this same manner for each $P_n\in\mathscr{P}$ and every pair $(i,j).$ After finitely many steps we both can refine $\mathcal{Q}$ to $\mathcal{Q}'$ so that $\mathcal{Q}'=\{S^jB'(i):1\le i\le I', 0\le j<h(i)\}$ and $\mathcal{Q}'$ is finer than $\mathscr{P}$. In addition, we can subdivide $\mathcal{P}$, into $\mathcal{P}'=\{A'(i,j)\}$ is a way such that:
	\begin{enumerate}
		\item For each $(i,j)\, \varphi[\mathbbm{1}_{S^jB'(i)}]=[\mathbbm{1}_{A'(i,j)}]$
		\item For each $(i,j)$ there exists $(\alpha,\beta)$ such that $A'(i,j)\subseteq A(\alpha,\beta)$.
	\end{enumerate}
	Additionally, Lemma~\ref{clopenpartition2} ensures that $x_0\in A'(1,h(1)-1)$ and $T(x_0)\in A'(2,0)$. Finally, a similar construction as done in the ``moreover'' part of Lemma~\ref{clopenpartition} ensures that $y_0\in S^{h(1)-1}B'(1)$ and $S(y_0)\in B'_2$.
\end{proof}

\begin{proof}[{\bf Proof of (2) $\Rightarrow$ (1)}]
Assume there is an exhaustive epimorphism $\varphi: \mathcal{G}(Y,S) \twoheadrightarrow \mathcal{G}(X,T)$ and fix points $x_0 \in X$ and $y_0$ in $Y$. Let $\{\varepsilon_k\}_{k\geq 0}$ be a sequence of positive numbers which decrease to zero such that $\varepsilon_0$ and $\varepsilon_1$ are larger than the diameters of both $X$ and $Y$.

We will recursively define a sequence of KR-partitions $\{\mathcal{Q}^n\}$ of $Y$ and KR-like partitions 
$\{\mathcal{P}^n\}$ of $X$. By requiring that every element of $\mathcal{Q}^n$ and $\mathcal{P}^n$ have diameter less than $\varepsilon_{n}$, we will insure that these partitions generate the topologies of $X$ and $Y$. 

\underline{Initial Step:}
First we define KR-partitions of height 1, 
$\mathcal{P}^1$ and $\mathcal{Q}^1$. 
Find a clopen set $B_1(1) \subset Y$ which contains $y_0$ but not $S(y_0)$. Set $B_1(2) = B_1(1)^c$ and
$\mathcal{Q}^1 = \{B_1(1),B_1(2)\}$.

Apply Lemma \ref{clopenpartition2} to create a clopen partition
$\{A_1(1,0),A_1(2,0)\}$ of $X$ such that 
$x_0 \in A_1(1,0)$, $T(x_0) \in A_1(2,0)$, and 
$\varphi[\mchi_{B_1(i,0)}] = [1_{A_1(i,0)}]$ for $i = 1,2$. 

\noindent 
\underline{Recursive Hypothesis:}
Now suppose $n \geq 1$ and we have a refining sequence of clopen KR-partitions 
$\mathcal{Q}^1 \prec \mathcal{Q}^2 \prec \cdots \prec \mathcal{Q}^n$ of $Y$ where
$$\mathcal{Q}^k = 
\{ S^j B_k(i) : 1 \leq i \leq I_k, 0 \leq j < h_k(i) \}$$
and a refining sequence of clopen KR-like partitions 
$\mathcal{P}^1 \prec \mathcal{P}^2 \prec \cdots \prec \mathcal{P}^n$ of $X$
where 
$$\mathcal{P}^k = 
\{A_k(i,j): 1 \leq i \leq I_k, 0 \leq j < h_k(i) \}$$ 
with the following properties for all $k = 1, \ldots ,n $ and 
for all $1 \leq i \leq I_k, 0 \leq j < h_k(i)$:
\begin{enumerate}
	\item $ \varphi[\mchi_{S^jB_k(i)}]=[\mchi_{A_k(i,j)}] $
	\item $\diam(A_k(i,j))<\varepsilon_k$,
	\item $\diam(S^j B_k(i))<\varepsilon_k$,
	\item $\diam \left( \cup_i A_k(i,0)\right)<\varepsilon_{k-1}$ and $\diam \left( \cup_i A_k(i,h(i)-1)\right)<\varepsilon_{k-1}$,
	\item $\diam \left( \cup_i B_k(i)\right)<\varepsilon_{k-1}$, and $\diam \left( \cup_i S^{h(i)-1}B_k(i)\right)<\varepsilon_{k-1}$,
	\item $x_0 \in A_k(1,h_k(1)-1)$ and $T(x_0) \in A_k(2,0)$,
	\item $y_0 \in S^{h(1)-1}B_k(1)$ and $S(y_0) \in B_k(2)$.
\end{enumerate}
Further assume that for $1 \leq k \leq n$, 
we have defined continuous functions $p_k: X \setminus \Top(\mathcal{P}^k) \to \mathbb{N}$ such that 
\begin{enumerate}
	\item $p_k = p_{k-1}$ on $X \setminus \Top(\mathcal{P}^{k-1})$, 
	\item the map $T_k'$ defined as $T_k'(x) = T^{p(x)}(x)$ gives a homeomorphism from 
	$A_k(i,j)$ to $A_k(i,j+1)$ for all $i$ and $0\leq j < h_k(i)-1$.
\end{enumerate}

\noindent 
\underline{Recursive Step:}	The recursive step is broken down into four parts.

\textbf{Step 1}:
Use Lemma \ref{refinetower} to create initial KR-partition for $(Y,S)$
$$\mathcal{Q} = \{S^j B(i) : 1 \leq i \leq I, 0 \leq j < h(i)\}$$
of $Y$ refining $\mathcal{Q}^n$ with the following properties for all $i,j$: 
\begin{itemize}
	\item $\diam(S^j B(i))<\varepsilon_{n+1}$,
	\item $ \cup_i B(i) \subset B_n(2)$, and 
	$\cup_i S^{h(i)-1}B(i) \subset S^{h_n(1)-1}B_n(1)$,
	\item $y_0 \in S^{h(1)-1}B(1)$ and $S(y_0) \in B(2)$.
\end{itemize}
Note that by the recursive hypothesis, the second item above implies
$\diam \left( \cup_i B(i)\right)<\varepsilon_n$, and $\diam \left( \cup_i S^{h(i)-1}B(i)\right)<\varepsilon_n$.

\textbf{Step 2}: Fix $i$ and consider the partition of $B_n(i)$ into elements 
$S^{j_1} B(i_1), S^{j_2} B(i_2), \ldots , S^{j_m} B(i_m)$ of $\mathcal{Q}$. 
By the recursive hypothesis, we know $\varphi[\mchi_{B_n(i)}]=[\mchi_{A_n(i,0)}]$.
Use Lemma \ref{clopenpartition2} to partition $A_n(i,0)$ into clopen sets 
$A(i_1,j_1), A(i_2,j_2), \ldots , A(i_m, j_m)$ such that 
$\varphi[\mchi_{S^{j_k}B(i_k)}]=[\mchi_{A(i_k,j_k)}]$ for all $k$.
In the particular case of $i=2$, use the ``Moreover...'' part of Lemma \ref{clopenpartition2} to insure $T(x_0) \in A(2,0)$. 

Set $z_0 = (T_n')^{-h_n(1)+1}(x_0) \in A_n(1,0)$. We know from Step 1 that $y_0 \in S^{h(1)-1} B(1)$. Therefore, one of the sets in the partition of $A(1,0)$ is $A(1,h(1)-h_n(1))$. Use the ``Moreover...'' part of Lemma \ref{clopenpartition2} to insure $z_0 \in A(1,h(1)-h_n(1))$. 

Consider any set $A(i,j)$ where $A(i,j) \subset A_n(k,0)$. 
For $1 \leq l < h_n(i)$, define $A(i,j+l)=(T_n')^lA(i,j)$. 
If $j+l < h(i)$, we have the following 
$$[\mchi_{A(i,j+l)}] = \varphi[\mchi_{S^{j+l}B(i)}] = \varphi[\mchi_{S^{j+l+1}B(i)}] = [\mchi_{A(i,j+l+1)}].$$
Therefore, the union of the partitions of the individual sets in $\mathcal{P}^n$ results in a KR-like partition 
$$\mathcal{P} = \{A(i,j) : 1 \leq i \leq I, 0 \leq j < h(i)\}$$
satisfying 
\begin{itemize}
	\item $\varphi[\mchi_{S^jB(i)}]=[\mchi_{A(i,j)}] $,
	\item $ \cup_i A(i,0) \subset A_n(2,0)$, and 
	$\cup_i A(i,h(i)-1) \subset A_n(1,h_n(i)-1)$,
	\item $x_0 \in A(1,h(1)-1)$ and $T(x_0) \in A(2,0)$.
\end{itemize}
Again by the recursive hypothesis, the second item above implies \\
$\diam \left( \cup_i A(i,0)\right)<\varepsilon_{n}$ and $\diam \left( \cup_i A(i,h(i)-1)\right)<\varepsilon_n$.
Let $A(i,j) \in \mathcal{P}$ be a level with $A(i,j) \subset \Top (\mathcal{P}^n)$ and $0 \leq j < h(i)$.
Because $\mathcal{P}$ is a KR-like partition, we may apply Lemma \ref{speedup} to define a continuous function $p_{n+1}$ on $A(i,j)$ so that $x \mapsto T^{p_{n+1}}(x)$ gives a homeomorphism from $A(i,j)$ to $A(i,j+1)$. For any  $A(i,j) \in \mathcal{P}$ with $A(i,j) \subset X \setminus \Top (\mathcal{P}^n)$, set $p_{n+1} = p_n$ on $A(i,j)$. Define $T'(x) = T^{p_{n+1}(x)}(x)$ on $X \setminus \Top (\mathcal{P})$. 

Thus the partitions $\mathcal{Q}$ and $\mathcal{P}$ satisfy all of the conditions (1) and (3)-(7). The next two steps are required to guarantee (2). 

\textbf{Step 3}: Use Lemma \ref{refinemirror} to refine $\mathcal{P}$ to create a KR-like partition
$\mathcal{P}^{n+1} = \{A_{n+1}(i,j) : 1 \leq i \leq I_{n+1}, 0 \leq j < h_{n+1}(i)\}$ 
where the diameter of every set $A_{n+1}(k,j)$ is less than $\varepsilon_{n+1}$, and 
$A_{n+1}(k,j) \subset A(i,j)$. If necessary, renumber the first index 
so that  $x_0 \in A_{n+1}(1,h_{n+1}(1)-1)$ and $T(x_0) \in A_{n+1}(2,0)$.

\textbf{Step 4}: In a manner similar to Step 2, 
we will use Lemma \ref{clopenpartition} to refine $\mathcal{Q}^{n}$ to 
create $\mathcal{Q}^{n+1}$ to mirror $\mathcal{P}^{n+1}$. Note that this is the one step of the proof that requires that the epimorphism be exhaustive. 

Fix $i,j$ and consider the partition of $A(i,j)$ into elements $\{A_{n+1}(k,j)\}$ of $\mathcal{P}^{n+1}$. Use Lemma \ref{clopenpartition} to partition $S^j B(i)$ into clopen sets\linebreak $\{S^j B_{n+1}(k)\}$ such that 
$\varphi[\mchi_{S^jB_{n+1}(k)}]=[\mchi_{A_{n+1}(k,j)}]$ for all $k,j$.
In the particular cases of $S^{h(1)-1} B(1)$ and $B(2)$, use the ``Moreover...'' part of Lemma \ref{clopenpartition} to insure that $y_0 \in S^{h(1)-1}B_{n+1}(1)$ and $S(y_0) \in B_{n+1}(2)$.

\noindent 
\underline{Endgame:}
With this recursive construction, we establish the existence of an infinite sequence of 
refining sequence of clopen KR-partitions $\{\mathcal{Q}^k\}_{k\geq 1}$ and clopen KR-like partitions $\{\mathcal{P}^k\}_{k\geq 1}$ satisfying properties (1)-(7) for all $k \geq 1$. For any $x \in \setminus \cap \Top(\mathcal{P}^k) = \{x_0\}$, we have defined $p(x) = \lim_{n \to \infty}p_n(x)$ and $T'(x) = T^{p(x)}(x)$. We complete the definition of $p$ by noting that property (4) implies $\cap \Top(\mathcal{P}^k) = \{x_0\}$ and 
defining $p(x_0)=1$. Because $p_k$ is continuous on $X \setminus \Top(\mathcal{P}^k)$, the point $x_0$ is the only possible point of discontinuity for $p$. 
Setting $T'(x)=T^{p(x)}(x)$ completes the definition of a strong speedup of $T$.

We now define a set map 
$h : \cup \mathcal{P}^k \to \cup \mathcal{Q}^k$ so that 
$h(A_k(i,j)) = S^jB_k(i)$. This set map respects subsets by construction of the sets
$A_k(i,j)$ and $S^jB_k(i)$. By properties (2) and (3) the sequences of partitions $\{\mathcal{Q}^k\}$ and $\{\mathcal{P}^k\}$ separate points in the spaces $X$ and $Y$ respectively; therefore, $h$ gives a homeomorphism from $X$ to $Y$ which we will also denote by $h:X\to Y$. 

Note that property (6) and (7) implies $h(x_0) = y_0$ and $h(T(x_0)) = S(y_0)$ since the partition elements that contain those points are always paired by the map $h$. 

Since $T'A_n(i,j)=A_n(i,j+1)$ for any $A_n(i,j) \in \mathcal{P}^n$ with $0\leq j < h_n(i)-1$, 
we have $$h(T'A_n(i,j)) = h(A_n(i,j+1)) = S^{j+1}B_n(i) = Sh(A_n(i,j)).$$
The point $x_0$ is the only point in $X$ which is in $A_n(i, h_n(i)-1)$ for all $n\geq 1$ and
$T(x_0)$ is the only point in $X$ which is in $A_n(i, 0)$ for all $n\geq 1$, and we have
$$h(T'(x_0)) = h(T(x_0)) = S(y_0) = Sh(y_0).$$
It follows that $T'$ and $S$ are conjugate, and that $S$ is a strong speedup of $T$, completing the proof. 

\end{proof}
\section{Topological Speedups}
In this section we work with general speedups and prove Main Theorem \ref{main}. We will follow the same overall strategy as in the previous section and prove several analogous results. Note that here, the notation $[f]$ denotes an equivalence class in $G/Inf$ instead of $G$. 

\subsection{Proof of (1) $\Rightarrow$ (3)}
\begin{theorem}
Let $(X,T)$ and $(Y,S)$ be minimal Cantor systems. If $S$ is a speedup of $T$ then there exists homeomorphism $h:X\rightarrow Y$, such that $h_{*}:M(X,T)\hookrightarrow M(Y,S)$ is an injection.
\end{theorem}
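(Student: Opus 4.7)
The plan is to take $h: X \to Y$ to be the topological conjugacy between $(X,T')$ and $(Y,S)$ provided by the definition of a speedup, where $T'(x) = T^{p(x)}(x)$ for some lower semicontinuous $p: X \to \Z^{+}$. Since $h$ conjugates $T'$ to $S$, the pushforward $h_*$ gives a bijection between $M(X, T')$ and $M(Y, S)$. Consequently, the content of the theorem reduces to showing that every $T$-invariant Borel probability measure on $X$ is automatically $T'$-invariant; that is, $M(X,T) \subseteq M(X, T')$. Once this containment is in hand, pushforward by $h$ yields the required injection $h_* : M(X,T) \hookrightarrow M(Y,S)$.

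To establish this containment, fix $\mu \in M(X,T)$ and a Borel set $A \subseteq X$. Decomposing over the countable partition $X = \bigsqcup_{n \geq 1} p^{-1}(\{n\})$, we have
$$T'^{-1}(A) = \bigsqcup_{n \geq 1} \bigl( p^{-1}(\{n\}) \cap T^{-n}(A) \bigr),$$
and using $T$-invariance of $\mu$ to rewrite each term as $\mu\bigl( T^n p^{-1}(\{n\}) \cap A \bigr)$, one obtains
$$\mu(T'^{-1}(A)) = \sum_{n \geq 1} \mu\bigl( T^n p^{-1}(\{n\}) \cap A \bigr) = \mu \biggl( A \cap \bigsqcup_{n \geq 1} T^n p^{-1}(\{n\}) \biggr).$$

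The crux of the argument, and in my view the main subtle point, is recognizing that the sets $\{T^n p^{-1}(\{n\})\}_{n \geq 1}$ in fact partition $X$. To see this, I would use that $T'$ is a homeomorphism (since it is conjugate to $S$), hence a bijection of $X$; because $T'(p^{-1}(\{n\})) = T^n p^{-1}(\{n\})$ and $T'$ is injective, these images are pairwise disjoint and their union equals $T'(X) = X$. This is the only place where the hypothesis that $T'$ is actually a homeomorphism (rather than merely a set map obtained by pointwise iteration of $T$) is genuinely used, and without it $T$-invariance need not imply $T'$-invariance. Granted the partition identity, the right-hand side above collapses to $\mu(A)$, so $\mu \in M(X,T')$, and the injection $h_* : M(X,T) \hookrightarrow M(Y,S)$ follows at once.
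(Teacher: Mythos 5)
Your proposal is correct and is essentially the paper's own argument: both decompose over the level sets $p^{-1}(\{n\})$, apply $T^n$-invariance of $\mu$ on each piece, and invoke the injectivity/surjectivity of $T'$ to reassemble the pieces into a disjoint cover. The only cosmetic difference is that you verify $\mu(T'^{-1}(A))=\mu(A)$ directly, while the paper verifies $\mu(T'(A))=\mu(A)$ after noting $M(X,T)=M(X,T^{-1})$; the two computations are mirror images of one another.
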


\begin{proof}
Let $p:X\rightarrow\mathbb{Z}^{+}$ be such that $T'(x)=T^{p(x)}(x)$ is a minimal homeomorophism of $X$ and let $\mu\in M(X,T)$. 
Because $T$ is invertible, $M(X,T)=M(X,T^{-1})$ thus it suffices to 
show that for any Borel set $A\subset X$ and any $\mu \in M(X,T)$ we have $\mu(T'A) = \mu (A)$. We obtain this by partitioning $A$ into sets according to the value of $p$. Since $p$ is lower semi-continuous the result follows. More specifically we have the following where $\sqcup$ denotes disjoint union: 
	\begin{align*}
	\mu(T'(A))&=\mu\left(T'\left(\displaystyle\bigsqcup_{n\in\mathbb{Z}^{+}}A\intersect p^{-1}(\{n\})\right)\right) \\
	&=\mu\left(\displaystyle\bigsqcup_{n\in\mathbb{Z}^{+}}T'(A\intersect p^{-1}(\{n\}))\right) \\
	&=\mu\left(\disUnion_{n\in\Z^{+}}T^{n}(A\intersect p^{-1}(\{n\})) \right)\\
	&=\displaystyle\sum_{n\in\mathbb{Z}^{+}}\mu(T^{n}(A\intersect p^{-1}(\{n\}))) \\
	&=\displaystyle\sum_{n\in\mathbb{Z}^{+}}\mu(A\intersect p^{-1}(\{n\}))\text{ as $\mu\in M(X,T)$.} \\
	&=\mu(A).
	\end{align*}

Now suppose $(X,T')$ and $(Y,S)$ are conjugate through $F:X \to Y$. 
Then by the above argument, the map $F_*: \mu \to \mu \circ F^{-1}$ provides an injection $F_*:M(X,T) \to M(Y,S)$. 
\end{proof}

Note that if $F_*$ above provides a bijection, then by Theorem \ref{GPS Theorem}, $T$ and $S$ would be orbit equivalent. 

\subsection{Proof of (3) $\Rightarrow$ (2)}
The following is an analog of Lemma \ref{towercob}.

\begin{lemma} \label{inflemma}
Suppose $(X,T)$ is a minimal Cantor system and $f \in C(X,\Z)$. Then $f \in \Inf(X,T)$ if and only if $\frac{1}{N} \sum_{j=0}^{N-1} fT^{j}$ converges uniformly to $0$. 
\end{lemma}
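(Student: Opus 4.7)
The plan is to prove the two directions separately, with the nontrivial direction being a standard weak-$*$ compactness argument on empirical measures.

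For the easy direction ($\Leftarrow$), I would fix any $\mu \in M(X,T)$ and use $T$-invariance of $\mu$ to write
$$\int f\, d\mu = \int \frac{1}{N}\sum_{j=0}^{N-1} fT^{j}\, d\mu.$$
Since the right-hand side tends to $0$ uniformly and $X$ is a probability space under $\mu$, passing to the limit $N \to \infty$ gives $\int f\, d\mu = 0$. As $\mu$ was arbitrary, $f \in \Inf(X,T)$.

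For the harder direction ($\Rightarrow$), I would argue by contradiction. Suppose the averages $g_N = \frac{1}{N}\sum_{j=0}^{N-1} fT^{j}$ do not converge uniformly to $0$. Since each $g_N$ is continuous on the compact space $X$, this gives sequences $N_k \to \infty$ and points $x_k \in X$ with $|g_{N_k}(x_k)| \geq \varepsilon$ for some fixed $\varepsilon > 0$. Form the empirical measures
$$\mu_k = \frac{1}{N_k} \sum_{j=0}^{N_k - 1} \delta_{T^{j} x_k} \in M(X),$$
so that $\int f\, d\mu_k = g_{N_k}(x_k)$ and hence $|\int f\, d\mu_k| \geq \varepsilon$. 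By weak-$*$ compactness of $M(X)$, a subsequence converges to some $\mu \in M(X)$.

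The key step, and the main technical point, is to verify that this limit $\mu$ is $T$-invariant. This is a classical computation: for any $h \in C(X)$,
$$\left| \int h\, d\mu_k - \int h \circ T\, d\mu_k \right| = \frac{1}{N_k}\left| h(x_k) - h(T^{N_k} x_k)\right| \leq \frac{2\|h\|_\infty}{N_k},$$
which vanishes in the limit, showing $\int h\, d\mu = \int h\circ T\, d\mu$ for every continuous $h$, so $\mu \in M(X,T)$. Taking the limit along the same subsequence also gives $|\int f\, d\mu| \geq \varepsilon$, contradicting the hypothesis that $f \in \Inf(X,T)$. This contradiction completes the proof. The only mild obstacle is keeping track of the weak-$*$ limit and confirming invariance of $\mu$; once that is in hand, the rest is bookkeeping.
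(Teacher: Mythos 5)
Your proof is correct. The substantive direction (negation of uniform convergence implies $f \notin \Inf(X,T)$) is exactly the paper's argument: empirical measures along the bad sequence, weak-$*$ compactness, invariance of the limit, and $\left|\int f\,d\mu\right| \geq \varepsilon$. You are slightly more careful than the paper in that you actually verify $T$-invariance of the weak-$*$ limit via the telescoping estimate $\frac{1}{N_k}\left|h(x_k)-h(T^{N_k}x_k)\right|$, whereas the paper simply asserts it. The one genuine difference is in the converse direction: the paper fixes an ergodic $\mu$, invokes the Birkhoff ergodic theorem to find a point whose averages converge to $\int f\,d\mu$, and then appeals to the ergodic decomposition to conclude for all invariant measures. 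You instead observe that $\int f\,d\mu = \int \frac{1}{N}\sum_{j=0}^{N-1} fT^{j}\,d\mu$ by invariance and let $N\to\infty$. Your route is more elementary (no ergodic theorem, no decomposition needed) and arguably cleaner; the paper's route generalizes the same idea but uses heavier machinery than required for this implication. Both are valid.
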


\begin{proof}
Suppose $\frac{1}{N}\sum_{j=0}^{N-1}fT^{j}$ does not converge uniformly to $0$; then there exists $\varepsilon>0$, increasing sequence $\{N_{k}\}$, and corresponding sequence $\{x_k\}$ for which $$\left|\frac{1}{N_k}\sum_{j=0}^{N_{k}-1}fT^j(x_k)\right|>\varepsilon.$$ 

Define $\mu_{k}=\sum_{j=0}^{N_{k}-1}\delta_{T^{j}x_k}$, where $\delta_z$ represents the Dirac delta measure at $z$. Because the set of Borel probability measures on $X$ form a compact set in the weak$^*$-topology, the sequence $\{\mu_{k}\}$ has a convergent subsequence, which will rename $\{\mu_{k}\}$, whose limit $\mu$ is $T$-invariant. Hence,
\begin{align*}
\left|\int f\,d\mu\right|&=\lim_{k\rightarrow\infty}\left|\int f\, d\mu_{k}\right|\\
&=\lim_{k\rightarrow\infty}\frac{1}{N_{k}}\left|\sum_{j=0}^{N_{k}-1}fT^{j}(x_k)\right|\\
&\ge\varepsilon.
\end{align*}
Therefore, $f\notin \Inf(X,T)$.

Conversely, suppose $\frac{1}{N} \sum_{j=0}^{N-1} fT^{j}$ converges uniformly to $0$ and let $\mu \in M(X,T)$ where $\mu$ is ergodic. Then by the Ergodic Theorem there exists $x \in X$ such that  
$$\int f d \mu =\lim_{N \to \infty} \frac{1}{N} \sum_{j=0}^{N-1} fT^{j}(x)$$
which implies $\int f d \mu=0$. By the Ergodic Decomposition Theorem, $f \in \Inf(X,T)$.
\end{proof}

The following is an analog of Lemma \ref{towerpos}.

\begin{lemma}\label{towerpos2}
Let $(X,T)$ be a minimal Cantor system and suppose $[f] \in G(X,T)/\Inf G(X,T)$. Then $[f] \in (G(X,T)/\Inf G(X,T))_+ \setminus \{0\}$
if and only if there is an $\varepsilon>0$ such that for any clopen set $A$, if $\diam(A)< \varepsilon$ then for all $x \in A$, 
$$\sum_{j=0}^{r_A(x)-1} fT^j(x) > 0.$$ 
\end{lemma}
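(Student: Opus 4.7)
The plan is to mirror the structure of Lemma \ref{towerpos}, substituting Lemma \ref{inflemma} (uniform convergence of Birkhoff averages for infinitesimals) for the role played by Lemma \ref{towercob} there. The main obstacle, compared with Lemma \ref{towerpos}, is that infinitesimals no longer sum to exactly $0$ on return times; instead one must show the partial sums of the infinitesimal part are dominated by the partial sums of a strictly positive piece for all sufficiently large return times.

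For the forward direction, suppose $[f] > 0$ in $G(X,T)/\Inf(X,T)$. Write $f = g + k$ with $g \in C(X,\Z)$ nonnegative and nonzero and $k \in \Inf(X,T)$. Minimality (as in Lemma \ref{towerpos}) yields an integer $r \geq 1$ and a nonempty clopen set $C \subset \{g > 0\}$ such that every forward orbit hits $C$ within $r$ steps, giving $\sum_{j=0}^{n-1} gT^j(x) \geq n/(r+1)$ for all $n \geq 2r$ and all $x$. By Lemma \ref{inflemma}, choose $N \geq 2r$ so large that
$$\left| \sum_{j=0}^{n-1} kT^j(x) \right| < \frac{n}{2(r+1)}$$
for every $x \in X$ and every $n \geq N$. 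Then $\sum_{j=0}^{n-1} fT^j(x) > 0$ for all such $n$ and $x$. Finally, invoke Lemma \ref{returntime} to obtain $\varepsilon > 0$ such that any clopen $A$ with $\diam(A) < \varepsilon$ satisfies $r_A(x) \geq N$ for every $x \in A$; the desired positivity of $\sum_{j=0}^{r_A(x)-1} fT^j(x)$ then follows.

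For the converse, suppose the hypothesis holds and fix any clopen $A$ of diameter less than $\varepsilon$. Define
$$f'(x) = \begin{cases} \sum_{j=0}^{r_A(x)-1} fT^j(x), & x \in A, \\ 0, & x \notin A. \end{cases}$$
Then $f' \in C(X,\Z)$ is nonnegative, and the telescoping computation at the end of the proof of Lemma \ref{towerpos} combined with Lemma \ref{towercob} gives $f - f' \in \cob(X,T) \subset \Inf(X,T)$, so $[f] = [f']$ in $G(X,T)/\Inf(X,T)$; in particular $[f]$ lies in the positive cone. It remains to verify $[f] \neq 0$, i.e.\ $f' \notin \Inf(X,T)$. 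Because $f'(x) \geq 1$ on the nonempty clopen set $A$, and because any $\mu \in M(X,T)$ has full support by minimality, we get $\int f' \, d\mu \geq \mu(A) > 0$, so $f' \notin \Inf(X,T)$. Combining the two facts, $[f] \in (G(X,T)/\Inf(X,T))_+ \setminus \{0\}$, completing the proof.
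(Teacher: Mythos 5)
Your proof is correct and follows essentially the same route as the paper: the forward direction uses the identical decomposition $f=g+k$, the minimality bound on Birkhoff sums of $g$, Lemma \ref{inflemma} for $k$, and Lemma \ref{returntime}; the converse merely writes out in detail what the paper disposes of by citing Lemma \ref{towerpos} (including the full-support argument needed to see that $[f]\neq 0$ in the quotient). No gaps.
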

\begin{proof}
Suppose $[f] \in (G(X,T)/\Inf G(X,T))_+\setminus \{0\}$, then $f$ is equal to a nonzero, nonnegative function $g$ plus an element $[k]$ of $\Inf G(X,T)$. There is a nonempty clopen set $C$ such that for $x \in C$, $g(x)>0$. Since $T$ is minimal, there is an $r>1$ such that for all $x \in X$, $\{x,T(x), \ldots T^{r-1}(x) \} \cap C \neq \emptyset$. Therefore, if $n > 2r$, we have 
$$\sum_{j=0}^{n-1} gT^j(x) \geq \frac{1}{r+1} n.$$ 
Also, by Lemma \ref{inflemma}, there is an $N$ such that if $x \in X$ and $n > N$, 
$$\left| \sum_{j=0}^{n-1} kT^j(x) \right| < \frac{1}{r+1} n.$$ 
Finally, by Lemma \ref{returntime}, there is an $\varepsilon>0$ such that if $A$ is a clopen set with $\diam (A) < \varepsilon$ then for all $x \in A$, $r_A(x)>\max \{N, 2r \}$, which implies
$$\sum_{j=0}^{r_A(x)-1} fT^j(x) > 0.$$ 

The converse follows from Lemma~\ref{towerpos}.
\end{proof}

\begin{lemma}
	\label{posbound}
	Let $(X,T)$ be a minimal Cantor system and suppose $[f] \in G(X,T)/\Inf G(X,T)$. If $[f] \in (G(X,T)/\Inf G(X,T))_+ \setminus \{0\}$
	then there is a $c>0$ such that  
	$$\int f d \mu \geq c$$ 
	for all $\mu \in M(X,T)$. 
\end{lemma}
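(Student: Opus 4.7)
The plan is to leverage the definition of the positive cone of $G(X,T)/\Inf(X,T)$ together with the compactness of $M(X,T)$ in the weak$^*$ topology. First, since $[f]$ lies in the positive cone of $G(X,T)/\Inf(X,T)$, by the definition of that cone there exists a nonnegative function $g \in C(X,\Z)$ with $[g] = [f]$ in the quotient. Because $[f] \neq 0$, the function $g$ cannot be identically zero: otherwise $[g] = 0$. So $g$ is a nonnegative, nonzero, continuous, $\Z$-valued function on $X$.

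Next, I would observe that since $g$ is continuous and $\Z$-valued and nonzero, the set $U = g^{-1}(\Z_{\geq 1})$ is a nonempty clopen subset of $X$ on which $g \geq 1$. Hence $g \geq \mchi_U$ pointwise, and by monotonicity of the integral, $\int g \, d\mu \geq \mu(U)$ for every $\mu \in M(X,T)$. Moreover, since $[g] - [f]$ lies in $\Inf(X,T)/\cob(X,T)$, we have $\int f\, d\mu = \int g\, d\mu$ for every $\mu \in M(X,T)$, so it suffices to bound $\mu(U)$ below uniformly in $\mu$.

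Now I would apply a standard compactness argument. Because $U$ is clopen, $\mchi_U$ is continuous, and therefore the map $\mu \mapsto \mu(U) = \int \mchi_U \, d\mu$ is continuous on $M(X)$ in the weak$^*$ topology, and in particular on the compact subset $M(X,T)$. Since $(X,T)$ is minimal, every $\mu \in M(X,T)$ has full support, so $\mu(U) > 0$ strictly. A strictly positive continuous function on a compact set achieves a positive minimum; let $c>0$ be that minimum. Then $\int f\, d\mu = \int g\, d\mu \geq \mu(U) \geq c$ for all $\mu \in M(X,T)$, completing the proof.

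No step really plays the role of ``main obstacle'' here: the compactness of $M(X,T)$ and the fact that $\mu(U) > 0$ for $\mu \in M(X,T)$ and nonempty clopen $U$ (from minimality forcing full support) are standard, and the choice of a nonnegative representative is immediate from the definition of the positive cone in $G(X,T)/\Inf(X,T)$. The only subtlety worth stating carefully is that the equality $\int f \, d\mu = \int g \, d\mu$ holds for \emph{every} $\mu \in M(X,T)$ precisely because $f$ and $g$ differ by a function in $\Inf(X,T)$.
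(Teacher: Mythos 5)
Your proof is correct, but it takes a different route from the paper's. The paper's argument is a two-line appeal to simplicity of the quotient dimension group: since every nonzero positive element of the simple group $\mathcal{G}(X,T)/\Inf(X,T)$ is an order unit, there is an $n\geq 1$ with $n[f]\geq[\mchi_X]$, and integrating the nonnegative representative of $n[f]-[\mchi_X]$ against any $\mu\in M(X,T)$ gives $\int f\,d\mu\geq 1/n$. You instead work directly with measures: you take a nonnegative representative $g$ of $[f]$, bound it below by the indicator of the nonempty clopen set $U=g^{-1}(\Z_{\geq 1})$, and then use full support of invariant measures (from minimality) together with weak$^*$ compactness of $M(X,T)$ and continuity of $\mu\mapsto\mu(U)$ to extract a uniform positive lower bound. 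Both arguments are valid; the paper's is shorter but leans on the (standard, unproved in the paper) fact that $G(X,T)/\Inf(X,T)$ is simple, whereas yours is self-contained given only the facts stated in the preliminaries (compactness of $M(X,T)$ and full support of its elements), and in effect re-derives the relevant consequence of simplicity from first principles. The one point worth stating explicitly in your write-up is that the existence of a genuinely nonnegative representative $g$ with $[g]=[f]$ is exactly how the paper defines the positive cone of the quotient, so no additional argument is needed there.
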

\begin{proof}
	Suppose $[f] \in (G(X,T)/\Inf G(X,T))_+\setminus \{0\}$, then since\linebreak $G(X,T)/\Inf G(X,T)$ is simple, there is an $n\geq 1$ such that 
	$n [f] > [1]$. Therefore 
	$$n \int f d \mu \geq 1$$
	for all $\mu \in M(X,T)$. 
\end{proof}

The following is an analog of Lemma \ref{functionineq}.

\begin{lemma}\label{functionineq2}
	Suppose $[f],[g],\in G(X,T)/\Inf(G(X,T))$ with $0<[f]<[g]$. Then there exists $f'\in C(X,\Z)$ such that $0\le f'(x)\le g(x)$ for every $x\in X$ and $[f']=[f]$.
\end{lemma}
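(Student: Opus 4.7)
The plan is to mimic the proof of Lemma~\ref{functionineq}, substituting Lemma~\ref{towerpos2} for Lemma~\ref{towerpos} to accommodate the weaker positivity hypothesis that now lives only in the quotient. First I would choose a nonnegative representative $g \in C(X,\Z)$ of the class $[g]$, which exists because $[g]$ belongs to the positive cone of $G(X,T)/\Inf(G(X,T))$. Since $0 < [f] < [g]$ in the quotient, both $[f]$ and $[g-f]$ are strictly positive there, so Lemma~\ref{towerpos2} applied to $f$ and to $g-f$ produces positive thresholds $\varepsilon_f$ and $\varepsilon_{g-f}$; I take $\varepsilon = \min(\varepsilon_f,\varepsilon_{g-f})$.

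Next I would pick a clopen set $C \subset X$ with $\diam(C)<\varepsilon$ and invoke Lemma~\ref{simpletower} to build a KR-partition
$$
\mathcal{P} = \{T^j A(i) : 1 \le i \le I,\ 0 \le j < h(i)\}
$$
with $\Base(\mathcal{P}) = C = \bigsqcup_i A(i)$ that refines the clopen partition of $X$ induced by the level sets of the pair $(f,g)$. Then $f$ and $g$ are constant on every level of $\mathcal{P}$, and since $h(i)$ is by construction the first return time of $A(i)$ to $C$, Lemma~\ref{towerpos2} applied with $A = C$ gives
$$
\sum_{j=0}^{h(i)-1} g T^j(x) \;>\; \sum_{j=0}^{h(i)-1} f T^j(x) \;>\; 0
$$
for each $x \in A(i)$. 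On each column I would then choose integer values $f'(T^j x)$ greedily so that $0 \le f'(T^j x) \le g T^j(x)$ at every level and $\sum_{j=0}^{h(i)-1} f'T^j(x) = \sum_{j=0}^{h(i)-1} fT^j(x)$; this is possible because the $g T^j(x)$ are nonnegative integers with sum exceeding the target. Extending these level-constant values to all of $X$ gives $f' \in C(X,\Z)$ satisfying $0 \le f' \le g$ pointwise and with $f - f'$ summing to zero on every column of $\mathcal{P}$.

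By Lemma~\ref{towercob} applied to $f - f'$, we obtain $f - f' \in \cob(X,T)$, so $[f'] = [f]$ already in $G(X,T)$ and a fortiori in the quotient $G(X,T)/\Inf(G(X,T))$. The only real novelty relative to Lemma~\ref{functionineq} is that positivity is granted only after passing to the quotient, and Lemma~\ref{towerpos2} is engineered precisely for this setting, so the column-sum dominance is delivered unchanged. The combinatorial column-filling step is an elementary greedy fill and presents no real obstacle.
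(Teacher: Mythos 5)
Your proposal is correct and follows essentially the same route as the paper: apply Lemma~\ref{towerpos2} to $f$ and $g-f$ to get the diameter threshold, build a KR-partition over a small base on whose levels $f$ and $g$ are constant, redistribute the column sums of $f$ greedily below $g$, and conclude via Lemma~\ref{towercob} that $f-f'\in\cob(X,T)$, hence $[f']=[f]$ in the quotient. The only cosmetic difference is that you cite Lemma~\ref{simpletower} where the paper cites Lemma~\ref{refinetower}, and you make explicit the (harmless) choice of a nonnegative representative of $[g]$.
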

\begin{proof}
Both $[f]$ and $[g-f]$ are in $(G(X,T)/\Inf G(X,T))_+\setminus \{0\}$. Take $\varepsilon$ to be smaller than the minimum of the two numbers that exist for $f$ and $g-f$ from Lemma \ref{towerpos2}. Use Lemma \ref{refinetower} to create a KR-partition 
$\mathcal{P} = \{T^jA(i) : 1 \leq i \leq I, 0 \leq j < h(i)\}$ over a set $A$ with diameter less than $\varepsilon$ such that $g$ and $f$ are constant on every level of $\mathcal{P}$. 
Then for any $x \in A(i)$, we have 
$$\sum_{j=0}^{h(i)-1} gT^j(x) \geq \sum_{j=0}^{h(i)-1} fT^j(x) \geq 0.$$
Therefore, it is possible to select values for a new function $f'$ on levels of the $i$th column of $\mathcal{P}$ so that $g(x) \geq f'(x) \geq 0$ for all $x$ and so that $$\sum_{j=0}^{h(i)-1} f'T^j(x) = \sum_{j=0}^{h(i)-1} fT^j(x).$$
By Lemma \ref{towercob}, $[f']=[f]$. 
\end{proof}

The following is an analog of Corollary \ref{clopensubset}.

\begin{coro}[\cite{Glasner-Weiss}]\label{Glasner-Weiss}
	Let $(X,T)$ be a minimal Cantor system. Let $f\in C(X,\Z)$ satisfy $0<\int fd\mu <1$ for every $\mu\in M(X,T)$. Then there exists a clopen subset $A$ in $X$ such that $\int fd\mu=\mu(A)$ for every $\mu\in M(X,T)$. That is $[\mathbbm{1}_A]=[f]$ in $G(X,T)/\Inf(G(X,T))$. 
\end{coro}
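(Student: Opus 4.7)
The plan is to construct the clopen set $A$ directly via a Kakutani-Rokhlin tower whose columns are tall enough that the Birkhoff sums of $f$ along each column lie strictly between $0$ and the column height, then redistribute each column sum as an indicator function.

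First, by weak-$*$ compactness of $M(X,T)$ and continuity of $\mu\mapsto\int f\,d\mu$, there is a $c>0$ with $c\le\int f\,d\mu\le 1-c$ for every $\mu\in M(X,T)$. The key preliminary input I will need is a uniform Birkhoff bound: an integer $N$ such that
\[
0<\sum_{j=0}^{n-1} fT^{j}(x)<n
\]
for every $n\ge N$ and every $x\in X$. I would obtain this by a standard compactness argument. If the lower bound failed along sequences $x_k\in X$ and $n_k\to\infty$, the empirical measures $\mu_k=\tfrac{1}{n_k}\sum_{j=0}^{n_k-1}\delta_{T^{j}x_k}$ would, along a subsequence, weak-$*$ converge to a $T$-invariant probability measure $\mu$ with $\int f\,d\mu\le 0$, contradicting the lower bound $c$. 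The upper bound follows by applying the same argument to $\mathbbm{1}_X-f$.

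With this in hand, apply Lemma \ref{returntime} to choose $\varepsilon>0$ small enough that every clopen set of diameter less than $\varepsilon$ has first return time exceeding $N$, and use Lemma \ref{simpletower} to build a KR-partition $\mathcal{P}=\{T^{j}D(i):1\le i\le I,\,0\le j<h(i)\}$ with base of diameter less than $\varepsilon$ which refines the clopen partition induced by $f$. Then each $h(i)>N$, and for each column the integer $s_i=\sum_{j=0}^{h(i)-1} f(T^{j}x)$ (independent of $x\in D(i)$) satisfies $0<s_i<h(i)$. In each column choose any $s_i$ of the $h(i)$ levels; let $A$ be the union of all chosen levels across all columns. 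Then $A$ is clopen, $\mathbbm{1}_A$ is constant on every level of $\mathcal{P}$, and $\mathbbm{1}_A$ has the same column sums as $f$. By Lemma \ref{towercob}, $f-\mathbbm{1}_A\in\cob(X,T)\subseteq\Inf(X,T)$, so $[\mathbbm{1}_A]=[f]$ in $G(X,T)/\Inf(G(X,T))$, and integrating yields $\mu(A)=\int f\,d\mu$ for every $\mu\in M(X,T)$.

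The main obstacle is the uniform Birkhoff bound in the second paragraph: without unique ergodicity there is no single target value for the averages, so this step relies essentially on the compactness of $M(X,T)$ together with the fact that weak-$*$ limits of empirical orbit measures are invariant. Once that is established, the remaining tower bookkeeping closely parallels the level-adjustment trick already used in the proof of Lemma \ref{functionineq2}.
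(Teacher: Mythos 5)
Your proof is correct. The paper itself gives no proof of this corollary (it is imported from Glasner--Weiss), but your argument is a clean, self-contained assembly of the paper's own toolkit: the compactness/empirical-measure argument for the uniform Birkhoff bound is exactly the mechanism behind Lemma \ref{inflemma} and Lemma \ref{towerpos2}, and the tower step (build a KR-partition over a small base refining the level sets of $f$, check $0<s_i<h(i)$, redistribute the column sums as an indicator, and invoke the converse direction of Lemma \ref{towercob}) is the same level-adjustment used in Lemmas \ref{functionineq} and \ref{functionineq2}; indeed your construction is essentially Lemma \ref{functionineq2} specialized to $g=\mathbbm{1}_X$, after which an integer-valued $f'$ with $0\le f'\le 1$ is automatically an indicator. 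One small remark: your construction actually yields $f-\mathbbm{1}_A\in\cob(X,T)$, i.e.\ $[f]=[\mathbbm{1}_A]$ already in $G(X,T)$ rather than merely in $G(X,T)/\Inf(G(X,T))$; this stronger conclusion is genuinely true under the stated hypothesis (it is the content of Lemmas \ref{towerpos} and \ref{towerpos2} combined with Corollary \ref{clopensubset}), so there is no error --- you have simply proved a bit more than was asked.
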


\begin{theorem}
	Let $(X,T)$ and $(Y,S)$ be minimal Cantor systems. Suppose there exists homeomorphism $F:X\rightarrow Y$, such that $F_{*}:M(X,T)\hookrightarrow M(Y,S)$ is an injection. 
	Then there exists an exhaustive epimorphism $\varphi:\mathcal{G}(Y,S)/\Inf(G(Y,S))\twoheadrightarrow\mathcal{G}(X,T)/\Inf(G(X,T))$ which preserves the distinguished order units.
\end{theorem}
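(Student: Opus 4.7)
The natural candidate is $\varphi:G(Y,S)/\Inf(G(Y,S))\to G(X,T)/\Inf(G(X,T))$ defined by $\varphi[f]=[f\circ F]$ on a representative $f\in C(Y,\Z)$. First I would check well-definedness: if $f-f'\in\Inf(Y,S)$, then for every $\mu\in M(X,T)$, the hypothesis gives $F_*\mu\in M(Y,S)$, so $\int (f-f')\circ F\,d\mu=\int(f-f')\,d(F_*\mu)=0$, showing $(f-f')\circ F\in\Inf(X,T)$. This is the one step where the assumption $F_*(M(X,T))\subseteq M(Y,S)$ is essential.

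Once $\varphi$ is well-defined, it is clearly an additive homomorphism. It sends positive cone to positive cone, since any $f\geq 0$ pulls back to $f\circ F\geq 0$, and it preserves the distinguished order unit since $\mathbbm{1}_Y\circ F=\mathbbm{1}_X$. For surjectivity, given any $[g]\in G(X,T)/\Inf(G(X,T))$ with representative $g\in C(X,\Z)$, the function $g\circ F^{-1}$ lies in $C(Y,\Z)$ (this uses that $F$ is a homeomorphism), and $\varphi[g\circ F^{-1}]=[g]$.

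The exhaustive property is the heart of the argument. Suppose $[h]\in(G(Y,S)/\Inf(G(Y,S)))_+$ with representative $h\geq 0$, and $[g]\in G(X,T)/\Inf(G(X,T))$ satisfies $0<[g]<\varphi[h]=[h\circ F]$. Apply Lemma \ref{functionineq2} inside $X$ (with the bounding function $h\circ F$) to produce $g'\in C(X,\Z)$ with $0\leq g'(x)\leq h\circ F(x)$ for all $x\in X$ and $[g']=[g]$. Now define $h'=g'\circ F^{-1}\in C(Y,\Z)$; then $0\leq h'(y)\leq h(y)$ for all $y\in Y$, and $\varphi[h']=[g'\circ F^{-1}\circ F]=[g']=[g]$.

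It remains to see that the inequalities $0<[h']<[h]$ are strict in $G(Y,S)/\Inf(G(Y,S))$. Both are proved by pushing the failure forward through $F_*$. For instance, if $[h']=0$, then $\int h'\,d\nu=0$ for every $\nu\in M(Y,S)$, and in particular for $\nu=F_*\mu$ with $\mu\in M(X,T)$, yielding $\int g'\,d\mu=0$ for every $\mu\in M(X,T)$, i.e.\ $[g']=0$, contradicting $[g]>0$. The same argument applied to $h-h'\geq 0$ shows that $[h-h']=0$ would force $\varphi[h]-[g]=0$, contradicting $[g]<\varphi[h]$. The only real obstacle is bookkeeping across the $\Inf$ quotients, and the argument hinges entirely on the two uses of the assumption $F_*(M(X,T))\subseteq M(Y,S)$: once to ensure $\varphi$ descends to the quotients, and once to transfer non-infinitesimality from $X$ back to $Y$.
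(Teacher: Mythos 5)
Your proposal is correct and follows essentially the same route as the paper, which defines $\varphi[f]=[f\circ F]$ and reduces everything to the $\Inf$-quotient analogues of the strong-speedup lemmas (in particular Lemma \ref{functionineq2} for exhaustiveness); your verification of well-definedness and of the strict inequalities via $F_*$ is exactly the bookkeeping the paper leaves implicit. No gaps.
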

\begin{proof}
As in the proof for $(3) \implies (2)$ in Main Theorem \ref{mainstrong}, we define 
$\varphi:\mathcal{G}(Y,S)/\Inf(G(Y,S)) \to \mathcal{G}(X,T)/\Inf(G(X,T))$ by $\varphi[f] = [fF]$. Having all the analogous lemmas as in the strong speedup case, the proof is the same.
\end{proof}

\subsection{Proof of $(2)\Rightarrow (1)$}
Our strategy for this section is the same as the previous two. That is, prove the equivalent of all lemmas in the strong speedups section and use the same proof technique. Throughout, the notation $[f]$ here denotes an equivalence class in \linebreak $G(X,T)/\Inf (G(X,T))$ or $G(Y,S)/\Inf (G(Y,S))$.

Below are analogues of Lemma \ref{clopenpartition} and \ref{clopenpartition2}. The proofs are the same except we use Corollary \ref{Glasner-Weiss} in place of Corollary \ref{clopensubset}.

\begin{lemma}\label{clopenpartition-2}
	Let $(X,T)$ and $(Y,S)$ be minimal Cantor systems. Assume there is an exhaustive epimorphism $\varphi :\mathcal{G}(Y,S) \twoheadrightarrow \mathcal{G}(X,T)$. Let $A \subset X$ and $B \subset Y$ be clopen sets such that $\varphi[\mathbbm{1}_B]=[\mathbbm{1}_A]$. Then for any clopen partition $\{A_1,A_2,\ldots,A_n\}$ of $A$ there is a clopen partition $\{B_1,B_2,\ldots,B_n\}$ of $B$ such that $\varphi[\mathbbm{1}_{B_i}]=[\mathbbm{1}_{A_i}]$ for all $1 \leq i \leq n$.
	
	Moreover, given any $n$ distinct 
	points $y_1,y_2,\ldots ,y_n \in B$ 
	we may select $B_i$ to contain $y_i$ for $i=1,2,\ldots,n$.  
\end{lemma}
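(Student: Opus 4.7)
The plan is to adapt the proof of Lemma \ref{clopenpartition} verbatim, replacing the single appeal to Corollary \ref{clopensubset} by a combined use of Lemma \ref{functionineq2} and Corollary \ref{Glasner-Weiss}, and interpreting all equivalence classes as living in the quotients $G(Y,S)/\Inf(G(Y,S))$ and $G(X,T)/\Inf(G(X,T))$.

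First I would set $A_0 = B_0 = \emptyset$ and build $B_1, B_2, \ldots, B_n$ recursively. For $k < n$, assuming disjoint clopen sets $B_0, \ldots, B_{k-1} \subset B$ have been constructed with $\varphi[\mchi_{B_i}] = [\mchi_{A_i}]$, the inequality
\[
0 < [\mchi_{A_k}] < \varphi\left([\mchi_B] - \sum_{i<k}[\mchi_{B_i}]\right)
\]
combined with the exhaustive property of $\varphi$ produces a class $[f]$ in the quotient with $0 < [f] < [\mchi_{B \setminus \cup_{i<k} B_i}]$ and $\varphi[f] = [\mchi_{A_k}]$. Then Lemma \ref{functionineq2} (applied with $g = \mchi_{B \setminus \cup_{i<k} B_i}$) delivers a function $f' \in C(Y,\Z)$ with $0 \leq f'(y) \leq \mchi_{B \setminus \cup_{i<k} B_i}(y)$ pointwise and $[f'] = [f]$; since $f'$ is integer-valued and pointwise dominated by an indicator function, $f'$ is itself the indicator of some clopen set $B_k \subset B \setminus \cup_{i<k} B_i$, giving $\varphi[\mchi_{B_k}] = [\mchi_{A_k}]$. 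When $k = n$, define $B_n = B \setminus \cup_{i<n} B_i$ and read off $\varphi[\mchi_{B_n}] = [\mchi_{A_n}]$ by telescoping.

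The ``Moreover'' clause will be handled by the same orbit-swap trick as in Lemma \ref{clopenpartition}: given $y_1 \in B$ with $y_1 \in B_i$ for some $i \neq 1$, choose by minimality a small clopen neighborhood $U \ni y_1$ inside $B_i$ and an integer $k \in \Z$ with $S^k(U) \subset B_1$, then swap $U$ against $S^k(U)$ to obtain new sets $B_1'$ and $B_i'$. In the quotient group $[\mchi_U] = [\mchi_{S^k U}]$, because $\mchi_{S^k U} - \mchi_U$ is a telescoping coboundary in $\cob(Y,S)$, so the identities $\varphi[\mchi_{B_j}] = [\mchi_{A_j}]$ are preserved. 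Iterate $n-1$ times with progressively smaller $U$ to place each $y_i$ in $B_i$ without disturbing the earlier points.

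The main obstacle, though mostly technical, is producing an actual subset of $B \setminus \cup_{i<k} B_i$ representing $[f]$: in $G(Y,S)$ itself, Corollary \ref{clopensubset} gave this in one move, whereas in the quotient $G(Y,S)/\Inf(G(Y,S))$ the Glasner--Weiss result (Corollary \ref{Glasner-Weiss}) only realizes a positive class as the class of an indicator function somewhere in $Y$ and does not automatically respect set containment. Routing through Lemma \ref{functionineq2}, which controls a representative \emph{pointwise} by a prescribed indicator function, is what closes the recursion and is the only genuine deviation from the proof of Lemma \ref{clopenpartition}.
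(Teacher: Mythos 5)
Your proposal is correct, and it follows the same recursive scheme the paper uses for Lemma \ref{clopenpartition}: build $B_1,\ldots,B_{n-1}$ one at a time via exhaustivity, take $B_n$ to be the complement, and handle the ``Moreover'' clause by the orbit-swap trick (which still works in the quotient since coboundaries lie in $\Inf(Y,S)$). The paper's own proof of Lemma \ref{clopenpartition-2} is literally one sentence --- ``the proofs are the same except we use Corollary \ref{Glasner-Weiss} in place of Corollary \ref{clopensubset}'' --- and your one deviation is exactly at that point: you realize the class $[f]$ produced by exhaustivity as a clopen subset of $B\setminus\bigcup_{i<k}B_i$ by invoking Lemma \ref{functionineq2} with $g=\mchi_{B\setminus\cup_{i<k}B_i}$, so that the integer-valued representative $f'$ with $0\le f'\le g$ is forced to be an indicator of a clopen set inside the prescribed region. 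This is a sound and arguably cleaner route than the paper's, since Corollary \ref{Glasner-Weiss} as stated only produces a clopen set somewhere in $Y$ and does not by itself give the containment needed to keep the $B_k$ disjoint; your observation that the containment must come from the pointwise bound of Lemma \ref{functionineq2} (mirroring how Corollary \ref{clopensubset} is itself derived from Lemma \ref{functionineq}) is precisely what makes the recursion close. In short: same architecture as the paper, with the one glossed-over step filled in correctly.
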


\begin{lemma} \label{clopenpartition2-2}
	Let $(X,T)$ and $(Y,S)$ be minimal Cantor systems. Assume there is an epimorphism $\varphi :\mathcal{G}(Y,S) \twoheadrightarrow \mathcal{G}(X,T)$. Let $A \subset X$ and $B \subset Y$ be clopen sets such that $\varphi[\mathbbm{1}_B]=[\mathbbm{1}_A]$. Then for any clopen partition $\{B_1,B_2,\ldots,B_n\}$ of $B$ there is a clopen partition $\{A_1,A_2,\ldots,A_n\}$ of $A$ such that $\varphi[\mathbbm{1}_{B_i}]=[\mathbbm{1}_{A_i}]$ for all $1 \leq i \leq n$.
	
	Moreover, given any $n$ distinct points $x_1,x_2,\ldots ,x_n \in A$ we may select $A_i$ to contain $x_i$ for $i=1,2,\ldots,n$.  
	
\end{lemma}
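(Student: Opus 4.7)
The plan is to mirror the recursive proof of Lemma \ref{clopenpartition2}, substituting the tools appropriate to the quotient-group setting. First I would induct on $k$: at each stage, given pairwise disjoint clopen sets $A_1,\ldots,A_{k-1}\subset A$ with $\varphi[\mathbbm{1}_{B_j}]=[\mathbbm{1}_{A_j}]$ for $j<k$, I need to produce a clopen $A_k$ disjoint from them and contained in $A$.

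For $k<n$, applying $\varphi$ to the inequality $0<[\mathbbm{1}_{B_k}]<[\mathbbm{1}_B]-\sum_{j<k}[\mathbbm{1}_{B_j}]$, which holds in $G(Y,S)/\Inf(G(Y,S))$, yields
$$0<\varphi[\mathbbm{1}_{B_k}]<[\mathbbm{1}_A]-\sum_{j<k}[\mathbbm{1}_{A_j}]=[\mathbbm{1}_{A\setminus\bigcup_{j<k}A_j}]$$
in $G(X,T)/\Inf(G(X,T))$. I would then invoke Lemma \ref{functionineq2} with $g=\mathbbm{1}_{A\setminus\bigcup_{j<k}A_j}$ and $[f]=\varphi[\mathbbm{1}_{B_k}]$ to obtain $f'\in C(X,\Z)$ with $0\le f'(x)\le g(x)$ and $[f']=\varphi[\mathbbm{1}_{B_k}]$. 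Since $g$ is $\{0,1\}$-valued, so is $f'$, whence $f'=\mathbbm{1}_{A_k}$ for a clopen set $A_k\subset A\setminus\bigcup_{j<k}A_j$. For $k=n$, I would simply take $A_n = A\setminus\bigcup_{j<n}A_j$; the equality $\varphi[\mathbbm{1}_{B_n}]=[\mathbbm{1}_{A_n}]$ then follows by subtracting the already-established equalities from $\varphi[\mathbbm{1}_B]=[\mathbbm{1}_A]$.

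For the ``Moreover'' clause, I would adapt the swap argument from the proof of Lemma \ref{clopenpartition}. If a prescribed point $x_i$ happens to lie in some $A_{i'}$ with $i'\ne i$, the minimality of $T$ provides an integer $k$ and a sufficiently small clopen neighborhood $U$ of $x_i$ with $U\subset A_{i'}$ and $T^{k}U\subset A_i$; exchanging $U$ and $T^{k}U$ between $A_i$ and $A_{i'}$ moves $x_i$ into $A_i$ without disturbing the partition or the $\varphi$-images, because $\mathbbm{1}_U-\mathbbm{1}_{T^{k}U}$ is already a coboundary in $G(X,T)$ and hence vanishes in the quotient. Processing the points $x_1,\ldots,x_n$ in order, and shrinking $U$ at each stage to avoid previously fixed points, settles the remaining indices.

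I do not anticipate a serious obstacle: the only substantive new ingredient beyond the argument for Lemma \ref{clopenpartition2} is Lemma \ref{functionineq2}, the quotient-group analog of Lemma \ref{functionineq}, which has already been established. As in the strong case, the exhaustive property of $\varphi$ is \emph{not} needed here; all one uses is that $\varphi$ is a unital positive group epimorphism onto the quotient dimension group.
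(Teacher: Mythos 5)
Your proof is correct and follows essentially the same route as the paper: the paper's proof is simply ``run the recursion of Lemma \ref{clopenpartition2}, replacing Corollary \ref{clopensubset} by Corollary \ref{Glasner-Weiss},'' together with the same swap argument for the ``Moreover'' clause, and you correctly observe that exhaustiveness is not needed in this direction. Your substitution of Lemma \ref{functionineq2} (with $g=\mathbbm{1}_{A\setminus\bigcup_{j<k}A_j}$) for Corollary \ref{Glasner-Weiss} is, if anything, slightly cleaner, since it directly yields a clopen set contained in the required subset of $A$.
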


Unlike the previous facts, the Speedup Lemma, which would be an analogue to the Strong Speedup Lemma \ref{speedup} will take a bit more work to establish.  The following lemma serves as a precursor to the key lemma and is instrumental for its proof. 
\begin{lemma}[Partial Speedup Lemma]\label{pkey lemma}
	Let $(X,T)$ be a minimal Cantor system, and let $A,B$ be nonempty, disjoint, clopen subsets of $X$. If for all $\mu\in M(X,T),\, \mu(A)<\mu(B),$ then there exists $p:A\rightarrow\Z^{+}$ such that $T':A\rightarrow B$ defined as $S(x)=T^{p(x)}(x)$ is a homeomorphism onto its image.
\end{lemma}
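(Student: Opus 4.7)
The plan is to construct a KR-partition adapted to $A$ and $B$, then to define the jump function $p$ level-by-level using in-column jumps when possible and wrap-around jumps through the base otherwise. First, by Lemma~\ref{posbound}, the strict inequality $\mu(A)<\mu(B)$ for every $\mu\in M(X,T)$ yields a positive constant $c$ with $\mu(B)-\mu(A)\ge c$ for every $\mu$; correspondingly Lemma~\ref{towerpos2}, applied to $f=\mathbbm{1}_B-\mathbbm{1}_A$, gives $\varepsilon>0$ such that the first-return sums of $f$ over any clopen base of diameter less than $\varepsilon$ are strictly positive.

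Next I would choose a clopen set $D\subseteq B$ of diameter less than $\varepsilon$ and apply Lemma~\ref{simpletower} to produce a KR-partition
$$\mathcal{P}=\{T^j D(i) : 1\le i\le n,\ 0\le j<h(i)\}$$
refining $\{A, B, (A\cup B)^c\}$ with base $D$. Every level of $\mathcal{P}$ is wholly inside $A$, wholly inside $B$, or disjoint from both; the base level of each column lies in $B$; and in every column $i$ the number $b_i$ of $B$-levels strictly exceeds the number $a_i$ of $A$-levels. By further refinement via Lemma~\ref{refinetower}, I may also arrange that each wrap-around set $T^{h(i)}D(i)$ lies in a single base piece $D(\sigma(i))$, so the first-return map $T_D:D\to D$ is well-controlled on each base piece.

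For each column $i$, list the $A$-heights as $\alpha_1(i)<\cdots<\alpha_{a_i}(i)$ and process them in decreasing order of height. For $\alpha_k(i)$, first look for an unused $B$-level $\beta$ in the same column with $\beta>\alpha_k(i)$; if one exists, set $p$ to equal $\beta-\alpha_k(i)$ on that level. If no in-column target remains, use wrap-around: set $p=h(i)-\alpha_k(i)+\ell\cdot h(\sigma(i))$ for the smallest $\ell\ge 0$ such that $T_D^{\,\ell+1}$ maps the corresponding subset of $D(\sigma(i))$ into an as-yet-unused clopen portion of $B$. Density of orbits of $T_D$ on $D$ guarantees such $\ell$ exists, and targets may be kept pairwise disjoint. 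With $p$ so defined, $T'(x)=T^{p(x)}(x)$ is locally constant on $A$ hence continuous, has image in $B$, and is injective because the targets are pairwise disjoint; being a continuous bijection from the compact set $A$ onto its image, it is a homeomorphism onto its image.

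The main obstacle is the matching step. Even with $b_i>a_i$ per column, a naive in-column matching fails when the $A$-levels cluster near the top. The wrap-around mechanism, enabled by $D\subseteq B$ and the refining structure of Lemma~\ref{refinetower}, is exactly what makes the construction work. Controlling wrap-around targets across all columns so that no two $A$-levels collide is where the uniform gap $c$ plays its role: it ensures enough total room in $B$ for all of $A$, while the minimality of $T_D$ on $D$ gives enough freedom to separate different images within $B$.
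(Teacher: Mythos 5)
Your setup (positivity of the return-time sums of $f=\mathbbm{1}_B-\mathbbm{1}_A$ over a small clopen base, hence $b_i>a_i$ in every column) matches the paper's, but the matching step contains both a missing idea and a construction that does not work as written. The missing idea: within a single column of a KR-partition, the \emph{first-entrance map} from the level $T^{j}D(i)$ to the level $T^{j'}D(i)$ is a homeomorphism for \emph{any} $j'\neq j$, not only for $j'>j$. If $j'>j$ the jump is the constant $j'-j$; if $j'<j$ the first entrance time is $r(x)-(j-j')$ where $r$ is the first return time to $T^{j}D(i)$, and the resulting map is the first-return homeomorphism of that level composed with $T^{j'-j}$. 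This is exactly the content of Lemma~\ref{speedup}, which the paper reuses here. Consequently, since $b_i>a_i$, an arbitrary injection from $A$-levels to $B$-levels \emph{within each column} already finishes the proof; your worry that ``a naive in-column matching fails when the $A$-levels cluster near the top'' rests on the false premise that a positive jump must land on a higher level of the column. No wrap-around through the base is needed, and the hypothesis $D\subseteq B$ is superfluous.

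The wrap-around mechanism you substitute for this is not sound as stated. The formula $p=h(i)-\alpha_k(i)+\ell\cdot h(\sigma(i))$ presumes the orbit stays in column $\sigma(i)$ for $\ell$ consecutive returns, but after one passage through the base the point can enter any column, so $T^{\ell\cdot h(\sigma(i))}$ does not correspond to $\ell$ applications of $T_D$ and need not land in $D$ at all; moreover the image of a full level under such a scheme is a subset of the base $D$, not of a $B$-level, and you would still have to show it is clopen and that $p$ is locally constant there. Finally, global injectivity across columns (``targets may be kept pairwise disjoint'') is asserted rather than proved, and the uniform gap $c$ by itself does not deliver it: total measure counting does not produce a disjoint family of clopen targets. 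All of these difficulties evaporate once you allow downward in-column jumps via first-entrance maps.
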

\begin{proof}
	Define $f=\mathbbm{1}_{B}-\mathbbm{1}_{A}$. With the given hypothesis, \linebreak
	$[f] \in (G(X,T)/\Inf G(X,T))_+\setminus \{0\}$. Using Lemma \ref{towerpos2}, we may find a clopen set $C$ such that 
	$$\sum_{j=0}^{r_C(x)-1} fT^j(x) > 0.$$ 
	Now use Lemma \ref{refineKR} to create a KR-partition $\mathcal{P} = \{ T^j C(i) : 0 \leq i \leq I, 0 \leq j < h(i) \}$ with $\Base (\mathcal{P})  = C$ and with $\mathbbm{1}_{B}$ and $\mathbbm{1}_{A}$ constant on all levels of $\mathcal{P}$. 
	
	Now fix $i \in \{1,\ldots, I\}$. Since the number of levels in the $i$th column which are subsets of $B$ is greater than the number of levels that are subsets of $A$ there is an injection 
	$$\Gamma: \{j : 0 \leq j < h(i), T^jC(i) \subset A\}
	\to \{j : 0 \leq j < h(i), T^jC(i) \subset B\}.$$
	For $x \in T^jC_i \subset B$, set 
	$$p(x) = \min\{n > 0 : T^n(x) \in T^{\Gamma(j)}C(i) \}.$$
	As in the case of Lemma \ref{speedup}, $p$ is well-defined and continuous and $x \mapsto T^{p(x)}(x)$ is  a homeomorphism from $T^jC(i)$ to $T^{\Gamma(j)}C(i)$. By working similarly in each column, the map $x \mapsto T^{p(x)}(x)$ yields a homeomorphism from $A$ to a clopen subset of $B$.
\end{proof}
We will use our partial speedup lemma to inductively prove a full speedup lemma. 
\begin{lemma}\label{clopen1}
	Let $(X,T)$ be a minimal Cantor system. Then for every $\varepsilon>0$ there exists $\delta>0$ such that for every clopen set $A \subset X$ with $\emph{\diam}(A)<\delta$ and every $\mu\in M(X,T),$ we have $\mu(A)<\varepsilon$.
\end{lemma}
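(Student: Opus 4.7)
The plan is to use Lemma \ref{returntime} to force the first return time to a small clopen set to be large, and then use disjointness of iterates together with $T$-invariance of $\mu$ to bound $\mu(A)$ uniformly over all invariant measures.

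More precisely, given $\varepsilon>0$, I would first fix an integer $N$ with $N>1/\varepsilon$. Then apply Lemma \ref{returntime} to this $N$ to obtain a $\delta>0$ such that whenever $A$ is a clopen set with $\diam(A)<\delta$, one has $r_A(x)>N$ for every $x\in A$. The key consequence is that for such an $A$, the iterates $A, TA, T^2A, \ldots, T^{N-1}A$ are pairwise disjoint: if instead $T^iA \cap T^jA \neq \emptyset$ for some $0\le i<j\le N-1$, then some point of $A$ would return to $A$ under $T^{j-i}$ with $1\le j-i<N$, contradicting $r_A(x)>N$.

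Now, for any $\mu\in M(X,T)$, $T$-invariance gives $\mu(T^jA)=\mu(A)$ for every $j$, so
\[
N\mu(A)=\sum_{j=0}^{N-1}\mu(T^jA)=\mu\!\left(\bigsqcup_{j=0}^{N-1}T^jA\right)\le \mu(X)=1.
\]
Therefore $\mu(A)\le 1/N<\varepsilon$. Since the choice of $\delta$ depends only on $N$ (equivalently, on $\varepsilon$) and not on $\mu$ or on the specific set $A$, this bound holds uniformly over all $\mu\in M(X,T)$ and all clopen $A$ of diameter less than $\delta$, which is exactly what is required.

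There is no real obstacle here; the lemma is essentially a packaging of Lemma \ref{returntime} together with the invariance of $\mu$. The only mild point requiring care is verifying the pairwise disjointness of $A,TA,\ldots,T^{N-1}A$ from the first-return-time estimate, which is the short argument outlined above.
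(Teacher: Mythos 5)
Your proof is correct and is essentially the same as the paper's: both choose $N>1/\varepsilon$, invoke Lemma \ref{returntime} to get $\delta$ forcing $r_A(x)>N$, and then use disjointness of $A,TA,\ldots,T^{N-1}A$ together with $T$-invariance to conclude $\mu(A)\le 1/N<\varepsilon$. Your explicit verification of the pairwise disjointness is a detail the paper leaves implicit, but the argument is identical.
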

\begin{proof}
	Fix any $\mu \in M(X,T)$. 
	Choose $N \geq 1$ such that $1/N < \varepsilon$. Use Lemma \ref{returntime} to find $\delta>0$ such that if $\diam(A) < \delta$ then for all $x \in A$, $r_A(x) >N$. Since $r_A(x)>N$ for all $x \in A$, the sets 
	$A, TA, \ldots T^{N-1}A$ are all disjoint and of equal $\mu$-measure. Therefore $\mu(A)< 1/N < \varepsilon$. 
\end{proof}
We will use induction on our previous lemma to prove our key Lemma.
\begin{lemma}[Speedup Lemma - Infinitesimal Case]\label{Key Lemma}
	Let $(X,T)$ be a minimal Cantor system and let $A,B$ be nonempty disjoint, clopen subsets of $X$. If for all $\mu\in M(X,T),\, \mu(A)=\mu(B)$, then there exists $p:A\rightarrow\Z^{+}$ such that $T':A\rightarrow B$, defined as $T'(x)=T^{p(x)}(x)$, is a homeomorphism onto $B$.
\end{lemma}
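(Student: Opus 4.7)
The plan is a back-and-forth exhaustion. I will inductively construct pairwise disjoint clopen sets $A_1, A_2, \ldots \subset A$ and $B_1, B_2, \ldots \subset B$, together with continuous positive-integer-valued functions $p_n : A_n \to \Z^+$ such that each $x \mapsto T^{p_n(x)}(x)$ restricts to a homeomorphism of $A_n$ onto $B_n$. Writing $A^{(n)} := A \setminus \bigsqcup_{k \le n} A_k$ and $B^{(n)} := B \setminus \bigsqcup_{k \le n} B_k$ for the unmatched remainders, the construction will ensure $\bigsqcup A_n = A$ and $\bigsqcup B_n = B$; the desired $p$ is then the gluing of the $p_n$'s. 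Because speedups preserve $T$-invariant measures (by the computation in the proof of (1) $\Rightarrow$ (3) above), we will have $\mu(A_n) = \mu(B_n)$ for every $\mu \in M(X,T)$, so the crucial equality $\mu(A^{(n)}) = \mu(B^{(n)})$ is preserved throughout.

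For the odd steps I apply the Partial Speedup Lemma \ref{pkey lemma} directly. Using Lemma \ref{clopen1}, together with the uniform lower bound on $\mu(B^{(n-1)})$ afforded by weak-$*$ compactness of $M(X,T)$ and full support of each $\mu$, I can choose a small-diameter clopen $A_n \subset A^{(n-1)}$ intersecting a prescribed basis clopen of $A$, with $\mu(A_n) < \mu(B^{(n-1)})$ uniformly in $\mu$. Lemma \ref{pkey lemma} then produces $p_n$ and $B_n \subset B^{(n-1)}$. The even steps must handle the fact that Lemma \ref{pkey lemma} is directionally asymmetric: it maps the smaller-measure set into the larger. To cover a prescribed basis piece of $B$, I apply Lemma \ref{pkey lemma} to the minimal system $(X, T^{-1})$, noting $M(X, T^{-1}) = M(X, T)$. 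Given a small clopen $B_n \subset B^{(n-1)}$ with $\mu(B_n) < \mu(A^{(n-1)})$ uniformly, this yields $q : B_n \to \Z^+$ such that $\phi(y) := T^{-q(y)}(y)$ is a homeomorphism of $B_n$ onto a clopen $A_n \subset A^{(n-1)}$. Inverting $\phi$, the map $A_n \to B_n$ takes the form $x \mapsto T^{p_n(x)}(x)$ with $p_n(x) := q(\phi^{-1}(x))$ continuous and strictly positive, so we again obtain a bona fide $T$-speedup homeomorphism.

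The main obstacle is arranging the enumeration so that $\bigsqcup A_n = A$ and $\bigsqcup B_n = B$, and so that the glued map is an honest homeomorphism (not merely a bijection of dense subsets). I will dovetail the odd and even steps against fixed countable clopen bases $\{U_k\}$ of $A$ and $\{V_k\}$ of $B$ whose elements have diameters tending to zero. At each step the set $A_n$ or $B_n$ is pushed as large as the strict measure constraint allows within the designated basis element; any clopen remainder forced by the strict inequality is taken (via Lemma \ref{clopen1}) to have diameter less than $1/n$ and to lie in a fresh portion of the space, so that by alternating the location of these remainders between different regions of $A$ and of $B$ we obtain $\bigcap A^{(n)} = \emptyset$ and $\bigcap B^{(n)} = \emptyset$. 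Continuity of $T^p$ on each $A_n$ is automatic from the continuity of the individual $p_n$, and the diameter control together with $\mu(A^{(n)}) \to 0$ uniformly forces $T^p$ and its inverse to be globally continuous, yielding the desired homeomorphism $T^p : A \to B$.
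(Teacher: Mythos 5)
Your overall architecture matches the paper's: a back-and-forth exhaustion driven by the Partial Speedup Lemma~\ref{pkey lemma}, with the even steps handled by applying that lemma to $(X,T^{-1})$ (noting $M(X,T^{-1})=M(X,T)$) and then inverting. But the endgame contains a genuine error. Your sets $A^{(n)} = A \setminus \bigsqcup_{k\le n}A_k$ form a nested decreasing sequence of clopen, hence compact, sets, and each is nonempty: the strict inequality required by Lemma~\ref{pkey lemma} forces $\mu(A^{(n)})=\mu(B^{(n)})>0$ at every finite stage, since every invariant measure has full support. By the finite intersection property, $\bigcap_n A^{(n)}\neq\emptyset$ no matter how cleverly you relocate the remainders; the claim that ``alternating the location of these remainders'' yields $\bigcap A^{(n)}=\emptyset$ is impossible. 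So your construction necessarily leaves at least one unmatched point in $A$ and one in $B$, and your proof never says what $p$ does there.

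The paper's resolution is to embrace this residue rather than eliminate it: fix $x_0\in A$ and $y_0=T^k(x_0)\in B$ with $k\in\Z^+$ \emph{in advance} (possible by minimality), and steer the construction so that the $A$-remainders shrink exactly to $\{x_0\}$ and the $B$-remainders to $\{y_0\}$, using the diameter control you already have in mind. One then sets $p(x_0)=k$ and verifies continuity of $T'$ at $x_0$ from the fact that small neighborhoods of $x_0$ land inside the small-diameter sets $B^{(n)}$ shrinking to $y_0$. Note that this coordination cannot be added as an afterthought: the two residual points must be related by a \emph{positive} power of $T$, and the forward orbit of $x_0$, while dense, is countable, so a generic limit point of the $B$-remainders would not be reachable from $x_0$. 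You need to choose the target $y_0$ on the forward orbit of $x_0$ at the outset and aim both halves of the back-and-forth at that pair. With that modification your argument goes through and is essentially the paper's proof.
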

\begin{proof}
	Let $A$ and $B$ be nonempty, disjoint, clopen subsets of $X$ and $x_0\in A$. Since $T$ is minimal there exists $k\in\Z^{+}$ such that $T^{k}(x)\in B$, let $y_0=T^{k}(x_0)$. 
	We will use induction to find a decreasing sequences of sets $\{A_{n}\}_{n\ge 0}$ and $\{B_{n}\}_{n\ge 0}$ such that
	$$
	\Intersect_{n\ge 0}A_{n}=\{x_0\} \text{ and }\Intersect_{n\ge 0}B_{n}=\{y_0\}
	$$
	all while defining $T'$ on larger and larger parts of $A$. 
		
	First find a clopen set $C_1\subsetneq A$ containing $x_0$ such that $\diam (C_1) < \varepsilon_1$. Set $A_1 = A \setminus C_1$. 
	Since every $\mu \in M(X,T)$ is full, $\mu(C_1)>0$ for all $\mu \in M(X,T)$ and for some $\eta_1>0$, $\mu(A_1)<\mu(B)-\eta_1$ for all $\mu \in M(X,T)$. Use Lemma \ref{clopen1} to find $D_1 \subset B$ containing $y_0$ with $\mu(D_1)< \eta_1$ for all $\mu \in M(X,T)$. 
	Therefore, we have $\mu(A_1)<\mu(B\setminus D_1)$ for all $\mu \in M(X,T)$. 
	Then by Lemma \ref{pkey lemma}, we can define a continuous $p: A_1 \to \Z^+$ so that $T'(x) = T^{p(x)}(x)$ is a homeomorphism from $A_1$ into $B \setminus D_1$. Set $B_1 = T'(A_1)$ and continue. Note that $x_0 \in A \setminus A_1$ and $y_0 \in B \setminus B_1$. We can find a $D_1 \subset B$ containing $y_0$ with $\mu(D_1)< \eta_1$ for all $\mu \in M(X,T)$. 
	
	For the next step, we start in the set $B$. We find a clopen set $D_2 \subsetneq B \setminus B_1$ of $y_0$ such that $\diam(D_2) < \varepsilon_2$. Set $B_2 = B \setminus (B_1 \cup D_2)$ Then using an argument similar to the above, we can apply Lemma \ref{clopen1} to find $C_2 \subset A\setminus A_1$ containing $x_0$ such that $\mu(A \setminus C_2)>\mu(B_2)$ for all $\mu$. Apply Lemma \ref{pkey lemma} to $T^{-1}$, $B_2$ and $A \setminus C_2$ we can define a continuous 
	$q: A_1 \to \Z^+$ so that 
	$y \mapsto T^{-q(y)}(y)$ is a homeomorphism from $B_2$ into $A \setminus C_2$. 
	Set $A_2$ equal to the image of this homeomorphism. If $x\in A_2$ then set $p(x) = q(y)$ where $x$ is the image of $y$. Then $T'(x) = T^{p(x)}(x)$ is a homeomorphism from $A_2$ to $B_2$. 
	
	We continue recursively to define sets $A \supset A_1 \supset A_2 \supset  \cdots$ and $B \supset B_1 \supset B_2 \supset \cdots $ such that 
	\begin{itemize}
		\item $\diam(A_n) < \varepsilon_n$ for $n$ odd, 
		\item $\diam(B_n) < \varepsilon_n$ for $n$ even, 
		\item $x_0 \in A_n$ for all $n$, 
		\item $y_0 \in B_n$ for all $n$,
		\item $\mu(A_n) = \mu(B_n)$ for all $n$. 		
	\end{itemize}
	Moreover, as we are defining $A_n$ and $B_n$ we are defining $p:A_n \setminus A_{n-1} \to \N$ such that $T': (A \setminus A_n) \to (B \setminus B_n)$ defined by $T'(x) = T^{p(x)}(x)$ is a homeomorphism. We complete the definition of $T'$ on $A$ by setting $p(x_0) = k$ so that $T'(x_0) = T^k(x_0)=y_0$. 
	
	Note that $T'$ is continuous at $x_0$ by the following argument. Let $\varepsilon>0$ be given. Find $n$ such that $\varepsilon_n < \varepsilon$. 
	Choose $\delta>0$ such that $B(x_0, \delta) \subset A_{n}$.  Then we have 
	$$T'(B(x_0,\delta)) \subset T'(A_{n}) = B_n \subset B(y_0,\epsilon_n)
	\subset B(y_0,\epsilon).$$ This finishes the proof. 
\end{proof}

With the key lemma now extended to speedups in general, we may exactly mimic the proof of $(2)\Rightarrow (1)$ from the strong speedups sections. 

\begin{theorem}
	Let $(X,T)$ and $(Y,S)$ be minimal Cantor systems. If there exists an exhaustive epimorphism $\varphi:\mathcal{G}(Y,S)/\Inf(G(Y,S))\twoheadrightarrow\mathcal{G}(X,T)/\Inf(G(X,T))$ which preserves the distinguished order units then $(Y,S)$ is a speedup of $(X,T)$.
\end{theorem}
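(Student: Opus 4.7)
The plan is to follow, step by step, the recursive construction used in the proof of $(2)\Rightarrow(1)$ for Main Theorem \ref{mainstrong}, substituting the infinitesimal analogs of each preparatory lemma. We fix basepoints $x_0\in X$ and $y_0\in Y$ and a sequence $\varepsilon_n \downarrow 0$, and inductively build a refining sequence of clopen KR-partitions
\[
\mathcal{Q}^n=\{S^jB_n(i):1\le i\le I_n,\ 0\le j<h_n(i)\}
\]
of $Y$ together with a refining sequence of clopen KR-like partitions
\[
\mathcal{P}^n=\{A_n(i,j):1\le i\le I_n,\ 0\le j<h_n(i)\}
\]
of $X$, matched so that $\varphi[\mathbbm{1}_{S^jB_n(i)}]=[\mathbbm{1}_{A_n(i,j)}]$ for every $(i,j)$ and so that both partition sequences generate the topologies.

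To carry out the initial step and to refine a given $\mathcal{Q}^n$-like pair into the next one, we use the analogs Lemma \ref{clopenpartition-2} (for transporting partitions from $X$ to $Y$ in Step 4, which is where exhaustivity is used) and Lemma \ref{clopenpartition2-2} (for transporting partitions from $Y$ to $X$ in Steps 1--2). The only substantial change appears in Step 2, where, having refined $\mathcal{Q}^n$ to $\mathcal{Q}$ and mirrored it to a KR-like partition $\mathcal{P}$, we need to extend the partially defined map $T'$ from the top of $\mathcal{P}^n$ across the new, finer tower. In the strong case this was done with the Strong Speedup Lemma \ref{speedup}, which required a coboundary relation between consecutive levels. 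Here we instead invoke the Speedup Lemma - Infinitesimal Case (Lemma \ref{Key Lemma}): the KR-like property says that, for each column of $\mathcal{P}$, consecutive levels $A(i,j)$ and $A(i,j+1)$ satisfy $[\mathbbm{1}_{A(i,j)}]=[\mathbbm{1}_{A(i,j+1)}]$ in $G(X,T)/\Inf(G(X,T))$, which by definition means $\mu(A(i,j))=\mu(A(i,j+1))$ for every $\mu\in M(X,T)$. Lemma \ref{Key Lemma} then produces a continuous jump function on $A(i,j)$ such that $x\mapsto T^{p(x)}(x)$ is a homeomorphism onto $A(i,j+1)$, exactly as needed.

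A minor simplification from the strong case is that we no longer need to preserve the ``single bad point'' structure (properties (4)--(7) of the recursive hypothesis there), because the jump function $p$ is allowed arbitrarily many points of discontinuity. We still track $x_0,y_0$ only to the extent needed to define the conjugating homeomorphism $h\colon X\to Y$ at the end: the set map $h(A_n(i,j))=S^jB_n(i)$ respects inclusions by construction, and the fineness of the two partition sequences forces it to extend to a genuine homeomorphism conjugating $T'$ with $S$.

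The main obstacle is making sure the successive applications of Lemma \ref{Key Lemma} glue into one globally defined function $p\colon X\to \Z^+$. This is handled exactly as in Step 2 of the strong case: at the $(n{+}1)$-st stage we only apply the Speedup Lemma on the current top $\Top(\mathcal{P}^n)$ of the old partition, and we declare $p_{n+1}=p_n$ on the already-defined region $X\setminus\Top(\mathcal{P}^n)$. Since property (2) in the recursion forces $\bigcap_n \Top(\mathcal{P}^n)$ to be either a single point or empty, the pointwise limit $p(x)=\lim_n p_n(x)$ is defined on all of $X$, and $T'(x)=T^{p(x)}(x)$ is a minimal homeomorphism of $X$ conjugate to $(Y,S)$ via $h$, yielding the desired speedup.
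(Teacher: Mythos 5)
Your proposal is correct and follows essentially the same route as the paper, which likewise proves this implication by rerunning the recursive construction from the strong case with Lemma~\ref{clopenpartition-2}, Lemma~\ref{clopenpartition2-2}, and the Speedup Lemma (Lemma~\ref{Key Lemma}) substituted for their coboundary-level counterparts. One small correction: Lemma~\ref{Key Lemma} does \emph{not} produce a continuous jump function on each level (the paper notes this explicitly), but as you yourself observe afterwards, no continuity of $p$ is required for a general speedup, so the argument stands.
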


\begin{proof}
	The proof follows exactly the same argument as for (2) $\implies$ (1) in Section 4. The only differences are the following: 
	\begin{itemize}
		\item Lemma~\ref{clopenpartition-2} is used in place of Lemma~\ref{clopenpartition} 
		\item Lemma~\ref{clopenpartition2-2} is used in place of Lemma~\ref{clopenpartition2} 
		\item the Speedup Lemma (Lemma~\ref{Key Lemma}) is used in place of the Strong Speedup Lemma (Lemma~\ref{speedup})
		\item Lemma~\ref{refinemirror} is true using Lemma~\ref{clopenpartition2-2} instead of Lemma~\ref{clopenpartition2}. 
	\end{itemize}
	Note that the argument from (2) $\implies$ (1) in Section 4 which discusses the continuity properties of $p$ does not follow here because Lemma~\ref{Key Lemma} does not imply that $p$ is continuous on elements of the clopen KR-partitions. However, for the conclusion of this theorem, we do not need to control for the continuity of the function $p$. 
\end{proof}

\section{Speedup Equivalence}
A natural question arises in light the results of speedups and orbit equivalence. If two minimal Cantor systems are speedups of one another, are they orbit equivalent? To discuss this further, define minimal Cantor systems $(X_{1},T_{1})$ and $(X_{2},T_{2})$ to be \emph{{speedup equivalent}} if $T_{1}$ is a speedup of $T_{2}$ and $T_{2}$ is a speedup of $T_{1}$. Combining \cite[Thm 2.2]{GPS} with our Main Theorem 2 we obtain the following corollary.

\begin{coro}\label{Cor.1}
	Let $(X_{1},T_{1})$ and $(X_{2},T_{2})$ be minimal Cantor systems. If $(X_{1},T_{1})$ and $(X_{2},T_{2})$ are orbit equivalent, then $(X_{1},T_{1})$ and $(X_{2},T_{2})$ are speedup equivalent.
\end{coro}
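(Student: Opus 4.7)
The plan is to obtain speedup equivalence by combining the Giordano--Putnam--Skau characterization of orbit equivalence (Theorem \ref{GPS Theorem}) with Main Theorem \ref{main}, exploiting the observation (made immediately after the definition of exhaustive epimorphism) that every unital dimension group isomorphism is automatically an exhaustive epimorphism.

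First I would invoke Theorem \ref{GPS Theorem} (specifically the equivalence of (1) and (2)) to translate the hypothesis that $(X_{1},T_{1})$ and $(X_{2},T_{2})$ are orbit equivalent into the existence of a unital order isomorphism
\[
\varphi : \mathcal{G}(X_{1},T_{1})/\Inf(G(X_{1},T_{1})) \longrightarrow \mathcal{G}(X_{2},T_{2})/\Inf(G(X_{2},T_{2})).
\]
Then I would check the easy observation that both $\varphi$ and $\varphi^{-1}$ are exhaustive: if $0 < h < \varphi(g)$ in the target, the element $g' := \varphi^{-1}(h)$ satisfies $\varphi(g') = h$, and since $\varphi^{-1}$ is an order isomorphism it sends the interval $(0,\varphi(g))$ onto $(0,g)$, so $0 < g' < g$. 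The symmetric argument works for $\varphi^{-1}$.

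Finally, I would apply Main Theorem \ref{main}, specifically the implication (2)$\Rightarrow$(1), twice: once using $\varphi^{-1}$ to conclude that $(X_{1},T_{1})$ is a speedup of $(X_{2},T_{2})$, and once using $\varphi$ to conclude that $(X_{2},T_{2})$ is a speedup of $(X_{1},T_{1})$. By definition of speedup equivalence, this completes the proof.

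There is essentially no obstacle here; the entire content lies in recognizing that an order isomorphism is an exhaustive epimorphism in both directions, so that the machinery of Main Theorem \ref{main} applies symmetrically. The only subtlety worth flagging is that exhaustiveness is stated for epimorphisms whose source cone is written in terms of positive elements, but this is automatic for isomorphisms because they carry the positive cone onto the positive cone.
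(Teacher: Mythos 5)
Your proof is correct and is essentially the paper's own (unwritten, one-line) argument: translate orbit equivalence into a unital order isomorphism of $\mathcal{G}(X_1,T_1)/\Inf(G(X_1,T_1))$ with $\mathcal{G}(X_2,T_2)/\Inf(G(X_2,T_2))$ via Theorem \ref{GPS Theorem}, note (as the paper does) that an isomorphism and its inverse are exhaustive epimorphisms, and apply Main Theorem \ref{main} in both directions. The only quibble is a harmless transposition of directions: under the convention of Main Theorem \ref{main}, the epimorphism $\varphi^{-1}:\mathcal{G}(X_2,T_2)/\Inf(G(X_2,T_2))\twoheadrightarrow\mathcal{G}(X_1,T_1)/\Inf(G(X_1,T_1))$ yields that $(X_2,T_2)$ is a speedup of $(X_1,T_1)$ and $\varphi$ yields the reverse, but since you invoke both maps the conclusion of speedup equivalence is unaffected.
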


Rephrasing Corollary~\ref{Cor.1} above, as equivalence relations orbit equivalence is contained in speedup equivalence. This leads us to a fundamental question: are orbit equivalence and speedup equivalence the same equivalence relation? In this section, we prove the orbit equivalence and speedup equivalence are the same when each minimal Cantor system has finitely many ergodic measures. In general, orbit equivalence and speedup equivalence are not the same as shown by Julien Melleray's forthcoming paper \emph{Dynamical Simplicies and Fra\"iss\'e Theory} \cite{Melleray}. Before we proceed, we will need the following proposition. 
\begin{prop}\label{MutuallySingular}
	Let $(X_{i},T_{i})$ be minimal Cantor systems and $\varphi:X_{1}\rightarrow X_{2}$ be a homeomorphism. If $\varphi_{*}:M(X_{1},T_{1})\hookrightarrow M(X_{2},T_{2})$ is an injection, then $\varphi_{*}$ preserves pairs of mutually singular measures.
\end{prop}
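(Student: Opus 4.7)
The plan is to exploit the fact that a homeomorphism of Cantor spaces is in particular a Borel isomorphism, so pushforward by $\varphi$ respects the Borel structure in both directions. First I would unpack the hypothesis of mutual singularity: if $\mu, \nu \in M(X_1,T_1)$ satisfy $\mu \perp \nu$, then by definition there exists a Borel set $A \subset X_1$ with $\mu(A) = 1$ and $\nu(A) = 0$.

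Next I would set $B = \varphi(A)$. Since $\varphi$ is a homeomorphism, $\varphi^{-1}$ is continuous, so $B = (\varphi^{-1})^{-1}(A)$ is a Borel subset of $X_2$, and moreover $\varphi^{-1}(B) = A$ by bijectivity of $\varphi$. The definition of the pushforward then yields $\varphi_*\mu(B) = \mu(\varphi^{-1}(B)) = \mu(A) = 1$ and $\varphi_*\nu(B) = \nu(A) = 0$, which is precisely the statement that $\varphi_*\mu$ and $\varphi_*\nu$ are mutually singular as elements of $M(X_2,T_2)$.

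There is no substantial obstacle here. The hypothesis that $\varphi_*$ maps $M(X_1,T_1)$ into $M(X_2,T_2)$ is used only implicitly, to ensure that the pushforwards are genuinely $T_2$-invariant and thus sit inside $M(X_2,T_2)$; the injectivity of $\varphi_*$ plays no role in this particular argument. The entire content of the proposition is essentially the observation that pushforward by any Borel isomorphism preserves the mutual singularity relation between measures, and the only point of care is the mild bookkeeping of verifying that $\varphi(A)$ is Borel, which follows instantly from the continuity of $\varphi^{-1}$.
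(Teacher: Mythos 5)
Your argument is correct: since $\varphi$ is a homeomorphism, $B=\varphi(A)=(\varphi^{-1})^{-1}(A)$ is Borel, and the computation $\varphi_*\mu(B)=\mu(A)=1$, $\varphi_*\nu(B)=\nu(A)=0$ gives mutual singularity of the pushforwards; you are also right that injectivity of $\varphi_*$ plays no role and that the membership in $M(X_2,T_2)$ is only needed for the conclusion to be a statement about invariant measures. The paper states this proposition without proof, treating it as immediate, and your argument is exactly the standard justification the authors evidently had in mind.
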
	
We now use Proposition~\ref{MutuallySingular} to prove the following theorem.
\begin{theorem}\label{speedupequivalence}
	Let $(X_{i},T_{i}),\, i=1,2,$ be minimal Cantor systems each with finitely many ergodic measures. If $(X_{1},T_{1})$ and $(X_{2},T_{2})$ are speedup equivalent, then $(X_{1},T_{1})$ and $(X_{2},T_{2})$ are orbit equivalent.
\end{theorem}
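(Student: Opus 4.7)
Proof proposal.

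The plan is to bootstrap speedup equivalence into the measure-bijection criterion in condition (3) of Theorem~\ref{GPS Theorem}. By Main Theorem~\ref{main} applied twice, speedup equivalence yields homeomorphisms $F:X_1 \to X_2$ and $G:X_2 \to X_1$ such that $F_*:M(X_1,T_1)\hookrightarrow M(X_2,T_2)$ and $G_*:M(X_2,T_2) \hookrightarrow M(X_1,T_1)$ are affine continuous injections of Choquet simplices. Under the hypothesis that each $M(X_i,T_i)$ has only finitely many ergodic measures, each simplex is finite-dimensional, with some $n_i$ extreme points. The key task is to upgrade one of the injections, say $F_*$, to a bijection; once we have this, Theorem~\ref{GPS Theorem} (3)$\Rightarrow$(1) delivers orbit equivalence.

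First I would compare dimensions. An affine injection of a $(n_1-1)$-dimensional simplex into a $(n_2-1)$-dimensional simplex forces $n_1 \le n_2$ (extreme points of the domain simplex are affinely independent and must map to affinely independent points). Symmetrically, $G_*$ gives $n_2 \le n_1$, hence $n_1 = n_2 =: n$. Let $\mu_1,\dots,\mu_n$ and $\nu_1,\dots,\nu_n$ denote the ergodic measures of $(X_1,T_1)$ and $(X_2,T_2)$ respectively.

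The main step is to show that $F_*$ sends ergodic measures to ergodic measures. For each $i$, the image $F_*\mu_i$ is $T_2$-invariant, so the Ergodic Decomposition Theorem writes
\[
F_*\mu_i \;=\; \sum_{j=1}^n a_{ij}\,\nu_j,\qquad a_{ij}\ge 0,\ \sum_j a_{ij}=1.
\]
Distinct ergodic measures $\mu_i,\mu_{i'}$ are mutually singular, so by Proposition~\ref{MutuallySingular} the measures $F_*\mu_i$ and $F_*\mu_{i'}$ are mutually singular. Mutual singularity of $\sum_j a_{ij}\nu_j$ and $\sum_j a_{i'j}\nu_j$ (whose ergodic decompositions are themselves mutually singular pairs $\nu_j$) forces the supports $\{j:a_{ij}>0\}$ and $\{j:a_{i'j}>0\}$ to be disjoint subsets of $\{1,\dots,n\}$. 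We therefore partition $\{1,\dots,n\}$ into $n$ nonempty disjoint sets, which is only possible if each is a singleton. Thus $F_*\mu_i = \nu_{\sigma(i)}$ for some permutation $\sigma$ of $\{1,\dots,n\}$.

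Finally, since $F_*$ is affine and continuous and maps the extreme points of $M(X_1,T_1)$ bijectively onto the extreme points of $M(X_2,T_2)$, it extends by convex combinations to a homeomorphism of the two simplices; in particular, $F_*:M(X_1,T_1)\to M(X_2,T_2)$ is a bijection. Condition (3) of Theorem~\ref{GPS Theorem} is now satisfied, so $(X_1,T_1)$ and $(X_2,T_2)$ are orbit equivalent. The substantive content is the mutual-singularity/pigeonhole argument in the previous paragraph; this is the step where finiteness of the set of ergodic measures is essential, and indeed (as the authors note referring to Melleray's work) is where the argument must fail in general.
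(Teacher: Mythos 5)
Your proposal is correct and follows essentially the same route as the paper: both use the two injections from Main Theorem 1(3) to equate the number of ergodic measures, then apply Proposition~\ref{MutuallySingular} and a pigeonhole argument on the disjoint ergodic supports to show the pushforward sends extreme points bijectively to extreme points, and conclude via Theorem~\ref{GPS Theorem}(3). The only cosmetic difference is that you obtain $n_1=n_2$ by affine independence, whereas the paper deduces it from the same mutual-singularity/disjoint-support observation.
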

\begin{proof}
	Since $T_{1}$ and $T_{2}$ are speedup equivalent, combining part $(3)$ of Main Theorem 1 and Proposition~\ref{MutuallySingular} it follows immediately that 
	$$
	|\partial_{e}(M(X_{1},T_{1}))|=|\partial_{e}(M(X_{2},T_{2}))|
	$$ 
	and without loss of generality we may assume $|\partial_{e}(M(X_{1},T_{1}))|=n$ for some $n\in\Z^{+}$. Every measure $\mu$ in $M(X_{2},T_{2})$ is a convex combination of ergodic measures in a unique way. With this in mind for $\mu\in M(X_{2},T_{2})$, let $E(\mu)$ denote the collection of all ergodic measures of $M(X_{2},T_{2})$ which have a positive coefficient in the unique ergodic decomposition of $\mu$. Observe if $\mu_{1},\mu_{2}\in M(X_{2},T_{2})$ with $\mu_{1}\neq\mu_{2}$ and $\mu_{1}\perp\mu_{2}$, then
	$$
	E(\mu_{1})\intersect E(\mu_{2})=\emptyset.
	$$
	Now as $T_{2}$ is a speedup of $T_{1}$,  
	there exists $\varphi:X_{1}\rightarrow X_{2}$, a homeomorphism, such that 
	$$
	\varphi_{*}:M(X_{1},T_{1})\hookrightarrow M(X_{2},T_{2})
	$$
	is an injection. Since $|\partial_{e}(M(X_{1},T_{1}))|=|\partial_{e}(M(X_{2},T_{2}))|=n$ and $\varphi_{*}$ is injective, we see that $\{E(\varphi_{*}(\nu_{i}))\}_{i=1}^{n}$, where $\{\nu_{i}\}_{i=1}^{n}=\partial_{e}(M(X_{1},T_{1}))$, is a collection of $n$ pairwise disjoint sets, as distinct ergodic measures are mutually singular. It follows that for each $i=1,2,\dots,n$, $E(\nu_{i})$ is a distinct singleton, and therefore $\varphi_{*}(\partial_{e}(M(X_{1},T_{1})))=\partial_{e}(M(X_{2},T_{2}))$. Coupling the facts that $\varphi_{*}$ is an affine map and a bijection on extreme points, we may conclude that $\varphi_{*}$ is a bijection, and hence is an affine homeomorphism between $M(X_{1},T_{1})$ and $M(X_{2},T_{2})$ arising from a space homeomorphism. Therefore, by \cite[Thm. $2.2$]{GPS} $(X_{1},T_{1})$ and $(X_{2},T_{2})$ are orbit equivalent.
\end{proof}                                                            

\begin{note}
	In \cite{AAO} the authors give an example of a unique ergodic systems whose square is minimal and has more than one invariant measure. Hence, speedups can leave not only the conjugacy class but orbit equivalence class as well. 
\end{note}

\end{document}